\pdfoutput=1
\documentclass[11pt,a4paper]{article}
\usepackage[T1]{fontenc}
\usepackage[utf8]{inputenc}
\usepackage{lmodern}
\usepackage{amsmath,amssymb,amsthm}
\usepackage{graphicx}
\usepackage{booktabs}
\usepackage{longtable}
\usepackage{microtype}
\usepackage[margin=2.6cm]{geometry}
\usepackage{xcolor}
\usepackage{tikz}
\usetikzlibrary{calc,intersections}
\usepackage{listings}
\usepackage[colorlinks=true,linkcolor=black,citecolor=black,urlcolor=blue!50!black]{hyperref}
\usepackage{caption}
\newtheorem{theorem}{Theorem}
\newtheorem{proposition}[theorem]{Proposition}

\newtheorem{lemma}[theorem]{Lemma}
\theoremstyle{definition}
\newtheorem{definition}[theorem]{Definition}
\newtheorem{result}[theorem]{Computational result}
\newtheorem{example}[theorem]{Example}
\theoremstyle{remark}

\newcommand{\R}{\mathbb{R}}
\newcommand{\Z}{\mathbb{Z}}
\newcommand{\Q}{\mathbb{Q}}
\newcommand{\Zphi}{\mathbb{Z}[\varphi]}
\newcommand{\Qphi}{\mathbb{Q}(\sqrt5)}
\newcommand{\Ico}{\mathbb{I}}
\newcommand{\OK}{\mathcal{O}_K}
\newcommand{\M}{M}
\newcommand{\MV}{M_V}
\newcommand{\Nm}{\operatorname{N}}
\newcommand{\Tr}{\operatorname{Tr}}
\newcommand{\vol}{\operatorname{vol}}
\newcommand{\conv}{\operatorname{conv}}
\newcommand{\sgn}{\operatorname{sgn}}
\newcommand{\Hfour}{H_4}

\newcommand{\NSTAR}{2\,455\,201}
\newcommand{\NSTARCLASSES}{338}
\newcommand{\NSTARSHAPES}{336}
\newcommand{\NSIMPLEXCLASSES}{66}
\newcommand{\NSIMPLEXCELLS}{732\,000}
\newcommand{\NHERED}{8\,161}
\newcommand{\NHEREDCLASSES}{8}
\newcommand{\ORBITHIST}{$1{:}1$, $120{:}4$, $600{:}6$, $720{:}8$, $1200{:}9$, $1440{:}9$, $2400{:}22$, $3600{:}56$, $7200{:}145$, $14400{:}78$}

\IfFileExists{numbers_ball.tex}{\newcommand{\NBALLFOUR}{2\,492\,641}
\newcommand{\NBALLFOURCLASSES}{343}

\newcommand{\NBALLFOUREXTRA}{37\,440}
\newcommand{\NBALLFOUREXTRACLASSES}{5}
}{\newcommand{\NBALLFOUR}{[pending]}\newcommand{\NBALLFOURCLASSES}{[pending]}\newcommand{\NBALLFOUREXTRA}{[pending]}\newcommand{\NBALLFOUREXTRACLASSES}{[pending]}}
\newcommand{\fig}[2][\linewidth]{\includegraphics[width=#1]{#2}}

\title{The 120-Cell Dissected by the Hyperplanes of Its Final Stellation\\[4pt]\large the golden ring $\Zphi$ as foundation: from Kramer's seven cells to $2\,455\,201$ cells in $338$ orbits}
\author{Miguel E.~P\'erez Ibarra\thanks{Formerly Escuela Superior de F\'isica y
    Matem\'aticas, Instituto Polit\'ecnico Nacional, Mexico City, Mexico.
    E-mail: \texttt{supersluky@gmail.com}.}
  \and Jaziel Flores Rodr\'iguez\thanks{Formerly Escuela Superior de F\'isica y
    Matem\'aticas, Instituto Polit\'ecnico Nacional, Mexico City, Mexico.
    E-mail: \texttt{jf@saptiva.com}.}}
\date{October 2026}

\begin{document}
\maketitle

\begin{abstract}
The five diagonals of a regular pentagon dissect it into eleven pieces and draw a pentagram.  The
three-dimensional version, the regular dodecahedron dissected by the twelve face planes of its
inner core, that is, by the planes of its own stellation, dates back to Kramer's seven
``elementary cells'' of 1982: the great stellated dodecahedron is a union of its pieces.  We define
the \emph{stellation dissection} of a golden regular polytope in any dimension as the dissection of
the convex hull of its final regular stellation by the facet hyperplanes of the core, show that the
core has $\varphi^{-d}$ of the inradius of the hull for $d=2,3,4$, and carry the dissection out for
the first time on the 120-cell.  The foundation of the whole construction is the arithmetic of the
\emph{golden ring} $\Zphi=\{m+n\varphi\}$, the ring of integers of $\Qphi$: the vertices and
hyperplanes of the dodecahedron and the 120-cell live in $\Zphi^d$, every orientation test is the
exact sign of an element of $\Zphi$, decided with integers, and the pieces of a dissection are
named by sign vectors.  We develop this foundation from scratch (conjugation, norm, units, the
primes $\sqrt5$, $2$, $3$, the exact sign test, the lattice $x\mapsto(x,\bar x)$, the vertex
modules $\M\subset\Zphi^3$ and $\MV\subset\Zphi^4$, the icosian ring) and then let it carry the
geometry.  The definition bifurcates in dimension four: the \emph{star cut} at depth $\varphi^{-4}$,
a unit, has $\NSTAR$ convex cells in $\NSTARCLASSES$ orbits of the Coxeter group $\Hfour$, while the
\emph{hereditary cut} at depth $1-\varphi^{-4}=\sqrt5/\varphi^2$, a unit times the ramified prime,
induces the three-dimensional dissection on every dodecahedral facet and has $\NHERED$ cells in
$\NHEREDCLASSES$ orbits.  Three facts are new to this version: all $112$ vertices of the $165$
pieces of the dodecahedron and all vertices of the $\NHERED$ pieces of the hereditary cut lie in
the vertex module of the polytope being cut, whereas the star cut needs the denominators $2$, $3$
and $5$; the seven kinds of pieces of the dodecahedron are recovered exactly as the $I_h$-orbits
of sign vectors, with the stellation layer equal to the number of plus signs; and the depths of the
two four-dimensional cuts differ, arithmetically, by a unit and one factor of the prime $\sqrt5$.  We give
the construction in enough detail to be repeated, catalogue the four-dimensional pieces, and
re-derive the dodecahedron case completely, including why plain floating point fails on it.  As a
coda we apply the same cuts to the circumscribed disc, ball and $4$-ball.  Every number quoted is
recomputed exactly by a short script whose output is reproduced in Appendix~\ref{app:checks}.
\end{abstract}

\noindent\small\textbf{Keywords:} stellation, stellation cells, dodecahedron, 120-cell, hyperplane
arrangement, exact geometric computation, golden ratio, ring of integers of $\Qphi$, icosians.\\
\textbf{MSC 2020:} 52B11, 52B15, 52C35, 11R11, 68U05, 00A66.
\normalsize

\begin{figure}[t]
  \centering
  \begin{minipage}[c]{0.3\linewidth}\centering
  \begin{tikzpicture}[scale=1.9, line join=round]
    \def\R{1}\def\r{0.381966}
    \foreach \kk in {0,...,4} { \coordinate (v\kk) at ({90+72*\kk}:\R); \coordinate (u\kk) at ({54+72*\kk}:\r); }
    \def\ex{0.25}
    \fill[blue!25!white,draw=black!70,very thin] (u0)--(u1)--(u2)--(u3)--(u4)--cycle;
    \foreach \kk/\kn in {0/1, 1/2, 2/3, 3/4, 4/0} {
      \coordinate (c) at (barycentric cs:v\kk=1,u\kk=1,u\kn=1);
      \fill[red!55!orange!60,draw=black!70,very thin] ($(v\kk)+\ex*(c)$)--($(u\kk)+\ex*(c)$)--($(u\kn)+\ex*(c)$)--cycle;
    }
    \foreach \kk/\kn in {0/1, 1/2, 2/3, 3/4, 4/0} {
      \coordinate (c) at (barycentric cs:v\kk=1,v\kn=1,u\kn=1);
      \fill[green!50!teal!40,draw=black!70,very thin] ($(v\kk)+\ex*(c)$)--($(v\kn)+\ex*(c)$)--($(u\kn)+\ex*(c)$)--cycle;
    }
  \end{tikzpicture}
  \end{minipage}\hfill
  \begin{minipage}[c]{0.33\linewidth}\fig{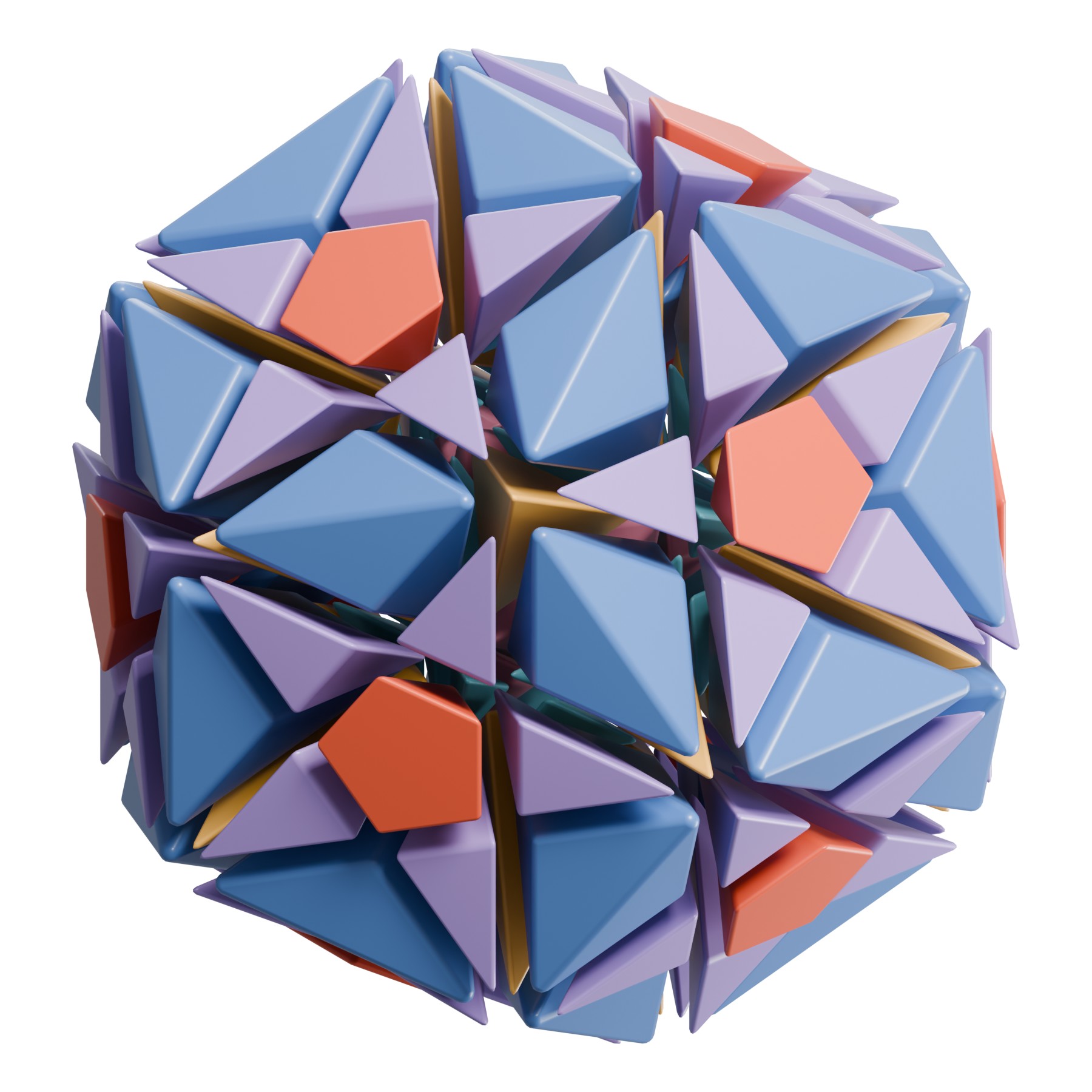}\end{minipage}\hfill
  \begin{minipage}[c]{0.33\linewidth}\fig{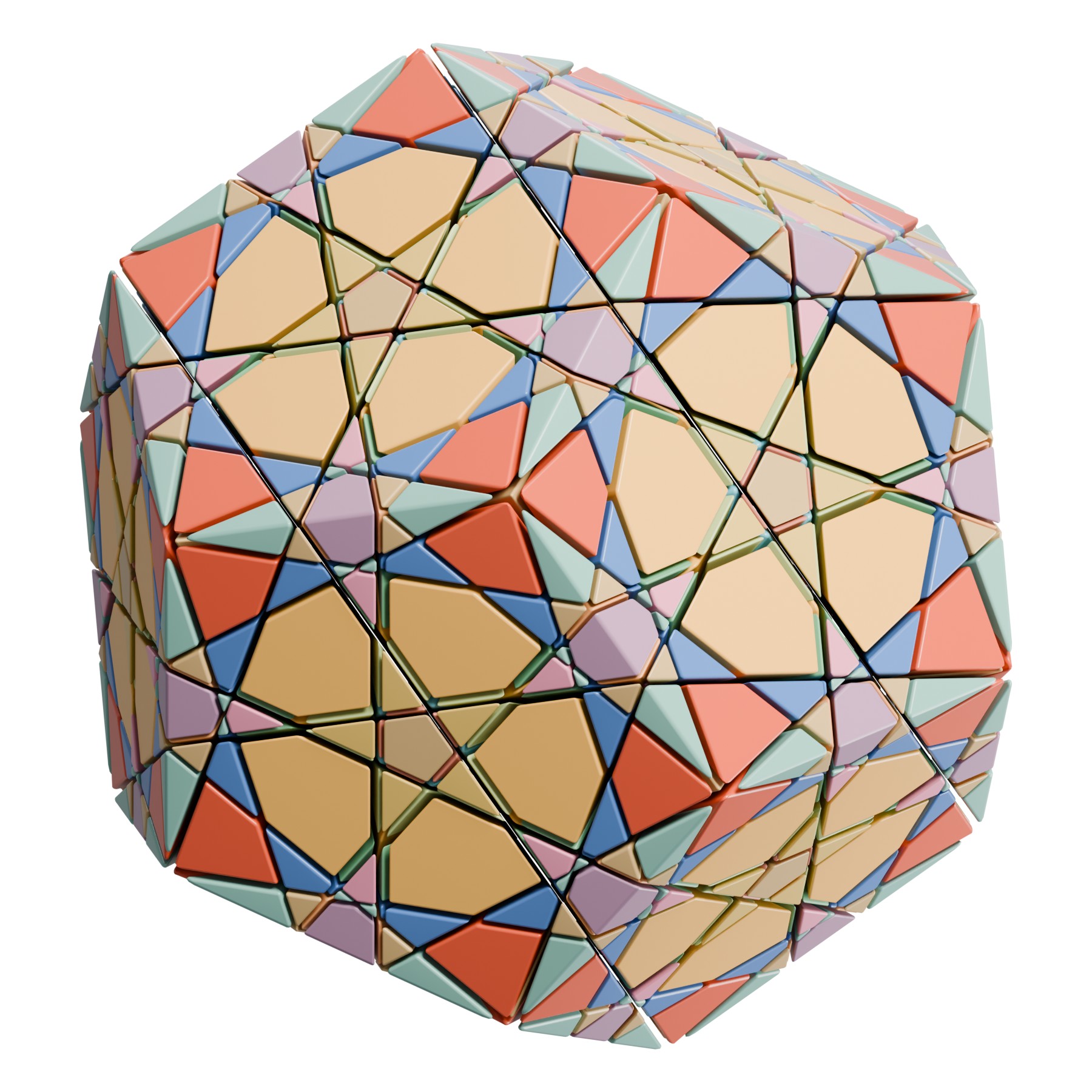}\end{minipage}
  \caption{The stellation dissection in dimensions two, three and four.  Left: the pentagon cut by its
  diagonals ($11$ pieces, exploded).  Middle: the dodecahedron cut by the twelve face planes of its
  inner core, exploded ($165$ pieces, Kramer's seven cells).  Right: the pattern that the $120$
  facet hyperplanes of the inner core of the 120-cell induce on one of its dodecahedral facets,
  exploded ($3\,655$ pieces on the facet, $\NSTAR$ cells in all).}
  \label{fig:teaser}
\end{figure}

\section{Introduction}\label{sec:intro}

Draw the five diagonals of a regular pentagon.  They cut it into eleven pieces: a small
inverted pentagon in the middle, five isosceles triangles that are the points of a pentagram,
and five obtuse triangles between the points and the sides.  The small pentagon is similar to
the large one with ratio $\varphi^{-2}$, where $\varphi=(1+\sqrt5)/2$, and the diagonals are the
lines of its sides.  Said the
other way round: the pentagram is the stellation of the small pentagon, the large pentagon is
its convex hull, and the dissection is by the side lines of the \emph{core}: it is the
stellation diagram of the small pentagon, completed to the hull of the star.  We call this the
\emph{stellation dissection} of the large pentagon.  This is the oldest picture in the subject; the Pythagoreans used the pentagram
as a sign of recognition, and the golden section enters mathematics through exactly this
figure \cite{Huntley1970,Coxeter1969}.

The three-dimensional analogue replaces the pentagon by the regular dodecahedron $B$ and the
diagonals by the twelve face planes of the dodecahedron $C=\varphi^{-3}B$, which is the
stellation core of the great stellated dodecahedron with the vertices of $B$
\cite{Coxeter1973,Wenninger1971}.  The planes cut $B$ into $165$ convex pieces of seven
kinds.  To our knowledge this cut was first made by Peter Kramer in 1982, in the first attempt at
a three-dimensional analogue of the Penrose tilings: he found seven convex ``elementary cells''
that pack a regular dodecahedron and that can be re-packed, scaled by $\varphi^3$, by copies of
themselves \cite{Kramer1982,Kramer2011}; Mosseri and Sadoc reduced his set to four tiles the
same year \cite{MosseriSadoc1982}.  The same pieces reappear in the mid-1990s as the blocks of
Goodman-Strauss's \emph{Dodecafoam}, a substitution tiling in which every block is divided into
smaller blocks and only the dodecahedral ones are drawn
\cite{GoodmanStrauss1999,GoodmanStrauss1999b,GoodmanStraussKrech1995,GoodmanStrauss2008}.  His
description of the construction is the one we start from: ``cutting it apart with planes that
crisscross through its vertices, chopping it into variously shaped cells.  The innermost cell is
itself a miniature dodecahedron, three golden powers smaller than the first.  Nested between the
two, in layers, we see the three stellated dodecahedra'' \cite{GoodmanStraussPoster}.  The 1995
image pages add that the symmetry behind the construction ``generalizes to dimension 4 (but no
higher)'' \cite{GoodmanStraussKrech1995}.

This article carries the construction into dimension four, where the dodecahedron becomes the
120-cell $\{5,3,3\}$, with $600$ vertices and $120$ dodecahedral facets, and the great stellated
dodecahedron becomes the great grand stellated 120-cell $\{5/2,3,3\}$, the only regular star
polychoron whose convex hull is the 120-cell \cite{Coxeter1973,Hess1883}.  We searched the arXiv,
the journal indexes and the polytope community's web resources for a dissection of the 120-cell by
its facet hyperplanes and found none.  The nearest neighbours are the stellation tools of the
software Stella4D \cite{Webb}, which can display cells of the stellation diagram of a polychoron,
and the data collected on the regular star polychora by Bowers and others \cite{Bowers,PolytopeWiki},
which concern the surfaces of the stars (densities, external pieces) rather than the interior
cells.  Neither contains the counts, the classification or the method below.

\paragraph{The ring first.}  The reason the computation can be done at all, and done exactly, is
arithmetic.  Every number that occurs, every coordinate of a vertex, every coefficient of a
hyperplane, every depth at which the arrangement changes, is an element of the quadratic field
$\Qphi=\{a+b\sqrt5:a,b\in\Q\}$, and after one rescaling almost every one of them is an element of
its ring of integers, the \emph{golden ring}
\[
  \Zphi=\{m+n\varphi:\ m,n\in\Z\},\qquad \varphi^2=\varphi+1 .
\]
This ring is strictly larger than $\Z[\sqrt5]$, it has the units $\pm\varphi^k$, which are the
scales of all the self-similar pictures of the subject, and it has a prime $\sqrt5$ that is not a
unit, which is where the self-similarity stops.  We therefore put the ring first
(Section~\ref{sec:ring}) and let it carry everything that follows: the exact sign of $m+n\varphi$
decided by integers, which makes every orientation test exact (Section~\ref{sec:algorithm}); the
$\Zphi$-modules spanned by the vertices of the dodecahedron and the 120-cell, in which the
vertices of the pieces turn out to lie; and the unit/non-unit distinction between the two
four-dimensional cuts.  The pentagon itself does \emph{not} have golden coordinates in the plane
(the norm $5/16$ of $\sin^272^\circ$ is not a square), which is why the story starts in dimension
three.

\paragraph{Contributions.}
\begin{enumerate}
\item A self-contained account of the golden ring as the coordinate ring of the golden polytopes:
the ring of integers $\Zphi$ and why it is not $\Z[\sqrt5]$, the exact sign test, the primes
$\sqrt5$, $2$, $3$ and the ``golden scales'' $\pm\varphi^a\sqrt5^{\,b}$ with an exact test for
them, the lattice picture $x\mapsto(x,\bar x)$, and the vertex modules $\M\subset\Zphi^3$ (index
$4$), $\MV\subset\Zphi^4$ (index $16$) and the icosian ring $\Ico$ (Section~\ref{sec:ring}).
\item A definition of the stellation dissection valid in every dimension in which golden regular polytopes
exist, and the computation of its depth, $\varphi^{-d}$ of the inradius for $d=2,3,4$
(Section~\ref{sec:definition}).
\item The dodecahedron case done completely and exactly: the seven pieces with their metric data
and their identification with Kramer's cells, the great stellated dodecahedron as a union of $63$
of them, the behaviour of the cut at every other depth with the three critical depths
$\varphi^{-3}$, $1/\sqrt5$, $\varphi^{-1}$ derived as heights in $\Qphi$, and two exact statements
that are new: all $112$ vertices of the pieces lie in the vertex module $\M$, and the seven kinds of
pieces are exactly the $I_h$-orbits of sign vectors (Section~\ref{sec:dodeca}).
\item An exact algorithm for cutting a golden polytope by golden hyperplanes, with all predicates
in $\Zphi$, which handles the four-dimensional case in four minutes and $290$\,MB
(Section~\ref{sec:algorithm}).  A reader who wants to repeat the computation needs only the
coordinates of Section~\ref{sec:120cell} and the procedure of Section~\ref{sec:algorithm}.
\item The stellation dissection of the 120-cell, which splits into a star cut and a hereditary cut.  For
each we give the number of cells, their orbits under $\Hfour$ with a complete catalogue, the
pattern they induce on a dodecahedral facet, three-dimensional sections, and the arithmetic of
their vertices: those of the hereditary cut lie in $\MV$, those of the star cut need the
denominators $2$, $3$, $5$ (Section~\ref{sec:results}, Appendix~\ref{app:catalogue}).
\item The same cuts applied to the circumscribed disc, ball and $4$-ball: the disc and the ball
have exactly as many pieces as the pentagon and the dodecahedron, the $4$-ball has
$\NBALLFOUREXTRA$ more than the 120-cell (Section~\ref{sec:balls}).
\end{enumerate}

\section{The golden ring}\label{sec:ring}

This section contains the arithmetic the rest of the article rests on.  Nothing in it is new
(a textbook reference is \cite{Marcus1977}; for the quadratic case \cite[Ch.~XIV]{HardyWright});
what is perhaps unusual is to see how much of it a geometric computation actually uses.

\subsection{The field \texorpdfstring{$\Qphi$}{Q(sqrt5)} and its conjugation}
$\Qphi=\{a+b\sqrt5:a,b\in\Q\}$ is the smallest field containing $\Q$ and $\sqrt5$.  Addition is
componentwise, multiplication uses $(\sqrt5)^2=5$, and division is done with the \emph{conjugate}
$\sqrt5\mapsto-\sqrt5$:
\[
  \frac1{a+b\sqrt5}=\frac{a-b\sqrt5}{a^2-5b^2},
\]
where the denominator never vanishes for $(a,b)\neq(0,0)$ because $\sqrt5$ is irrational.
Adjoining $\varphi=(1+\sqrt5)/2$ instead of $\sqrt5$ gives the same field, since $\sqrt5=2\varphi-1$.
The conjugation $\sigma(a+b\sqrt5)=a-b\sqrt5$, equivalently
$\sigma(\varphi)=\bar\varphi=1-\varphi=-\varphi^{-1}$, respects sums and products: it is the
non-trivial element of the Galois group of $\Qphi/\Q$.  It produces two rational invariants, the
trace and the norm,
\begin{equation}\label{eq:trnorm}
  \Tr(x)=x+\bar x,\qquad \Nm(x)=x\bar x;\qquad
  \text{for } x=m+n\varphi:\quad \Tr(x)=2m+n,\quad \Nm(x)=m^2+mn-n^2 .
\end{equation}
The trace is additive, the norm is multiplicative, and $x$ is a root of $X^2-\Tr(x)X+\Nm(x)$.
Norms do two jobs below: they clear denominators ($1/W=\overline W/\Nm(W)$ turns a hyperplane
with coefficients in $\Qphi$ into one with coefficients in $\Zphi$), and they measure how much
a multiplication stretches the ring (Section~\ref{sec:embedding}).

\subsection{The ring of integers is \texorpdfstring{$\Zphi$}{Z[phi]}, not \texorpdfstring{$\Z[\sqrt5]$}{Z[sqrt5]}}\label{sec:integers}
An \emph{algebraic integer} is a root of a monic polynomial with integer coefficients, and the
algebraic integers in a number field $K$ form a ring $\OK$.  Both conditions matter: $x^2-\tfrac12$
is monic but not integral, $2x^2-10$ is integral but not monic, $x^2-x-1$ is both and has the root
$\varphi$, $x^2-5$ is both and has the root $\sqrt5$.  For $K=\Qphi$ the test is immediate from
\eqref{eq:trnorm}: every polynomial over $\Q$ with the root $x\notin\Q$ is a multiple of
$X^2-\Tr(x)X+\Nm(x)$, so
\[
  x\in\OK\iff\Tr(x)\in\Z\text{ and }\Nm(x)\in\Z .
\]
Write $x=(a+b\sqrt5)/2$ with $a,b\in\Q$.  Then $\Tr(x)=a$ and $\Nm(x)=(a^2-5b^2)/4$, so integrality
forces $a,b\in\Z$ with $a^2\equiv5b^2\equiv b^2\pmod4$, that is, $a\equiv b\pmod2$.  Hence
\begin{equation}\label{eq:OK}
  \OK=\Big\{\tfrac{a+b\sqrt5}2:\ a\equiv b\pmod 2\Big\}=\{m+n\varphi:\ m,n\in\Z\}=\Zphi ,
\end{equation}
with $a=2m+n$, $b=n$.  The ``odd halves'' qualify because $5\equiv1\pmod4$; for $d\equiv2,3\pmod4$
they never do and the ring of integers of $\Q(\sqrt d)$ is $\Z[\sqrt d]$.  The subring
$\Z[\sqrt5]=\{a\equiv b\equiv0\}$ has index $2$ in $\Zphi$ and misses $\varphi$, $\varphi^{-2}=2-\varphi$,
and every coordinate $m+n\varphi$ with $n$ odd, such as the $\varphi^2=1+\varphi$ of the vertex
$(0,\varphi^2,1)$ of the dodecahedron: the coordinate ring of the golden
polytopes is $\Zphi$, and when this article says ``golden ring'' it means $\Zphi$.

\begin{example}
$\varphi$: $\Tr=1$, $\Nm=-1$, integral.  $\sqrt5/2$: $\Tr=0$, $\Nm=-5/4$, \emph{not} integral,
although it has the same denominator as $\varphi$; the pair $(a,b)=(0,1)$ fails the parity test.
$2+3\varphi$: $\Tr=7$, $\Nm=4+6-9=1$, integral and a unit (it is $\varphi^4$).
$(2+\varphi)/5$: $\Tr=1$, $\Nm=1/5$, not integral; this is the number that relates the face centres
of the dodecahedron to its face normals (Section~\ref{sec:modules}), and the reason the face
centres are not in $\Zphi^3$ although the vertices and normals are.
\end{example}

\subsection{Units and Fibonacci numbers}\label{sec:units}
$\Nm(\varphi)=-1$, so $\varphi$ is a unit; by Dirichlet's unit theorem a real quadratic field has
unit rank one, and $\varphi$ is the fundamental unit: $\Zphi^\times=\{\pm\varphi^k:k\in\Z\}$.
From $\varphi^2=\varphi+1$,
\begin{equation}\label{eq:powers}
\begin{gathered}
  \varphi^{n}=F_n\varphi+F_{n-1}\quad(F_0=0,\ F_1=1),\qquad
  \sqrt5=2\varphi-1,\qquad \varphi^3=1+2\varphi,\qquad \varphi^4=2+3\varphi,\\
  \varphi^{-1}=\varphi-1,\qquad \varphi^{-2}=2-\varphi,\qquad \varphi^{-3}=2\varphi-3,\qquad
  \varphi^{-4}=5-3\varphi .
\end{gathered}
\end{equation}  A unit scale is what
makes a construction self-similar \emph{inside the ring}: multiplication by $\varphi^k$ is a
bijection of $\Zphi$, and of $\Zphi^d$, onto itself.  The depths $\varphi^{-2}$, $\varphi^{-3}$,
$\varphi^{-4}$ of the stellation dissections are units, so each core is a copy of the hull inside
the same lattice.  The hereditary depth $1-\varphi^{-4}=3\varphi-4$ is not: its norm is
$16-12-9=-5$.

\subsection{Primes of \texorpdfstring{$\Zphi$}{Z[phi]} and golden scales}\label{sec:primes}
$\Zphi$ is a principal ideal domain (its class number is one), so its primes are elements.  A
rational prime $p$ behaves in one of three ways, decided by whether $x^2-x-1$ has a root modulo $p$:
\begin{itemize}
\item $p=5$ \emph{ramifies}: $5=\sqrt5^{\,2}$, and $\sqrt5=\varphi^{-1}(\varphi+2)$ is a prime of
norm $-5$.  It is the prime of the discriminant, and the one prime that occurs geometrically in
this article.
\item $p\equiv\pm1\pmod5$ \emph{splits} into two conjugate primes of norm $p$: $11=(3+\varphi)(4-\varphi)$,
$19=\Nm(4+\varphi)$, $29=\Nm(5+\varphi)$.
\item $p\equiv\pm2\pmod5$ is \emph{inert}: $2$, $3$, $7$, $13$ stay prime in $\Zphi$, with norm $p^2$.
\end{itemize}
The primes $2$ and $3$ appear below as denominators of vertices of the star cut, and $5$ as the
factor that separates the two four-dimensional cuts.  To say this precisely we name the scales
that occur in golden geometry.

\begin{definition}[golden scale]\label{def:scale}
$\lambda\in\Qphi^\times$ is a \emph{golden scale} if $\lambda=\pm\varphi^a\sqrt5^{\,b}$ with
$a,b\in\Z$; these form the subgroup of $\Qphi^\times$ generated by $-1$, the fundamental unit and
the ramified prime.
\end{definition}

\begin{proposition}[exact test]\label{prop:scale}
$\lambda$ is a golden scale if and only if $|\Nm(\lambda)|=5^{\,b}$ for some $b\in\Z$ and
$\lambda\sqrt5^{-b}$ is an algebraic integer; it is then a unit, hence $\pm\varphi^a$.
\end{proposition}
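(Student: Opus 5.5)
The plan is to prove the two directions separately, with the norm doing most of the work. Write $\lambda=\pm\varphi^a\sqrt5^{\,b}$. Since the norm is multiplicative (Section~\ref{sec:ring}, \eqref{eq:trnorm}), $\Nm(\varphi)=-1$ and $\Nm(\sqrt5)=-5$, we get $\Nm(\lambda)=(-1)^{a+b}5^b$, hence $|\Nm(\lambda)|=5^b$. Moreover $\lambda\sqrt5^{-b}=\pm\varphi^a$ is a unit of $\Zphi$, in particular an algebraic integer. This settles the forward implication.

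For the converse, suppose $|\Nm(\lambda)|=5^b$ and $\mu:=\lambda\sqrt5^{-b}\in\OK=\Zphi$ (by \eqref{eq:OK}). Multiplicativity gives $|\Nm(\mu)|=5^b\cdot5^{-b}=1$, so $\Nm(\mu)=\pm1$. An element of $\Zphi$ with norm $\pm1$ is a unit, because $\mu^{-1}=\pm\bar\mu$ and $\bar\mu\in\Zphi$ ($\Zphi$ is stable under conjugation, as $\bar\varphi=1-\varphi$). Section~\ref{sec:units} gives $\Zphi^\times=\{\pm\varphi^a\}$, so $\mu=\pm\varphi^a$. Then $\lambda=\pm\varphi^a\sqrt5^{\,b}$ is a golden scale.

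The last clause of the statement ("it is then a unit, hence $\pm\varphi^a$") refers to $\lambda\sqrt5^{-b}$, and this is exactly what the converse argument establishes. The only point needing care is that $b$ is determined by $\lambda$: it is forced by $|\Nm(\lambda)|$, since $5^b=5^{b'}$ implies $b=b'$. The decomposition is therefore canonical, and the test is effective. One computes $\Nm(\lambda)$ with rationals, reads off $b$, and checks integrality of $\lambda\sqrt5^{-b}$ via the parity criterion of \eqref{eq:OK}.

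There is no real obstacle here. The only nontrivial inputs are the unit theorem statement $\Zphi^\times=\{\pm\varphi^k\}$, which is already assumed in Section~\ref{sec:units}, and the norm-one-implies-unit step, which is the conjugation argument above.
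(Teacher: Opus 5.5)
Your proof is correct and follows the paper's argument. For the forward direction you use multiplicativity of the norm with $\Nm(\varphi)=-1$, $\Nm(\sqrt5)=-5$; for the converse you observe that $\lambda\sqrt5^{-b}$ has norm $\pm1$, so once it is integral it is a unit of $\Zphi$ and hence $\pm\varphi^a$. You spell out the norm-one-implies-unit step via $\mu^{-1}=\pm\bar\mu$, and the uniqueness of $b$, a little more explicitly than the paper does; the paper in turn adds the example $(-21-4\sqrt5)/19$ (norm $1$, not in $\Zphi$) to show that the integrality hypothesis cannot be dropped.
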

\begin{proof}
$\Nm(\pm\varphi^a\sqrt5^{\,b})=(-1)^a(-5)^b$ gives one direction.  Conversely $u=\lambda\sqrt5^{-b}$
has norm $\pm1$; if it is an algebraic integer it is a unit of $\Zphi$, so $u=\pm\varphi^a$.  The
integrality condition cannot be dropped: $(-21-4\sqrt5)/19$ has norm $1$ and is not in $\Zphi$.
\end{proof}

\begin{table}[t]
  \centering\footnotesize
  \begin{tabular}{llll}
    \toprule
    ratio & where it occurs & $\Nm$ & golden scale $\pm\varphi^a\sqrt5^{\,b}$\\
    \midrule
    $\varphi^{-2}$, $\varphi^{-3}$, $\varphi^{-4}$ & depth of the dissection, $d=2,3,4$ & $1$, $-1$, $1$ & $b=0$: units\\
    $3\varphi-4=1-\varphi^{-4}$ & hereditary depth & $-5$ & $\sqrt5\,\varphi^{-2}$, $(a,b)=(-2,1)$\\
    $1/\sqrt5$ & critical depth (face centres) & $-1/5$ & $(0,-1)$\\
    $\varphi^{-1}$ & critical depth (edge midpoints) & $-1$ & $(-1,0)$\\
    $5-2\sqrt5=7-4\varphi$ & facet-parallel cut of a facet, \S\ref{sec:120cell} & $5$ & $\sqrt5\,\varphi^{-3}$, $(-3,1)$\\
    $2\varphi^{-2}$ & edge of the 120-cell & $4$ & no (a rational factor $2$)\\
    \bottomrule
  \end{tabular}
  \caption{The ratios of this article under the test of Proposition~\ref{prop:scale}
  (Appendix~\ref{app:checks}, block~C).  All depths are golden scales; only the hereditary depth
  carries the prime $\sqrt5$.}\label{tab:scales}
\end{table}

\subsection{The exact sign}\label{sec:sign}
Every decision in the computation of Section~\ref{sec:algorithm} is ``is $m+n\varphi$ positive?''
with $m,n\in\Z$.  Two derivations give the same rule.

\emph{Via $\sqrt5$.}  $m+n\varphi=\tfrac12(p+q\sqrt5)$ with $p=2m+n$, $q=n$.  If $p$ and $q$ have
the same sign, that is the sign.  If they disagree, $\sgn(p+q\sqrt5)=\sgn p$ exactly when
$p^2>5q^2$:
\begin{equation}\label{eq:sign}
  \sgn(m+n\varphi)=\begin{cases}\sgn p & \text{if } pq\ge0,\ (p,q)\ne(0,0),\\
  \sgn p & \text{if } pq<0 \text{ and } p^2>5q^2,\\ \sgn q & \text{if } pq<0 \text{ and } p^2<5q^2.\end{cases}
\end{equation}

\emph{Via trace and norm.}  $x$ and $\bar x$ have the same sign if and only if $\Nm(x)>0$, and then
$\sgn x=\sgn\Tr(x)=\sgn(2m+n)$.  If $\Nm(x)<0$ the conjugates have opposite signs and $x>\bar x$
exactly when $n>0$, since $x-\bar x=n\sqrt5$.  So $\sgn(m+n\varphi)=\sgn(2m+n)$ if $m^2+mn-n^2>0$
and $\sgn n$ otherwise.  The two rules are the same rule.

\emph{Why there is no tie.}  $p^2=5q^2$ with $(p,q)\neq(0,0)$ would make $\sqrt5$ rational.  This
is the whole reason there is no $\varepsilon$ in the algorithm: predicates are decided, not
estimated.  The decision is nevertheless delicate in floating point, because
$F_n\varphi-F_{n+1}=(-1)^{n+1}\varphi^{-n}$ is as small as one likes with coefficients as small as
Fibonacci numbers; Section~\ref{sec:float} shows what the estimate costs.  In the implementation
$m,n$ are $128$-bit integers, every operation is checked for overflow, and $p^2$ is compared with
$5q^2$ in $256$ bits; the coefficient size is bounded once and for all because every vertex is a
fixed-size determinant of \emph{input} hyperplanes (Section~\ref{sec:dd}).

\subsection{The ring as a lattice: \texorpdfstring{$x\mapsto(x,\bar x)$}{x -> (x, conj x)}}\label{sec:embedding}
As a subset of $\R$ the ring $\Zphi$ is dense, which hides its structure.  The map
$\iota(x)=(x,\bar x)$ sends it onto the lattice of $\R^2$ with basis $(1,1)$ and
$(\varphi,\bar\varphi)$, of area $|\varphi-\bar\varphi|=\sqrt5$ per cell (Figure~\ref{fig:mink}).
In this picture $x=m+n\varphi\leftrightarrow(m,n)\in\Z^2$, addition is addition of vectors, and
multiplication by $s\in\Zphi$ is the linear map $\operatorname{diag}(s,\bar s)$, of determinant
$\Nm(s)$.  Two consequences are used below.
\begin{itemize}
\item Multiplication by $s$ maps $\Zphi$ onto the sublattice $s\Zphi$ of index $|\Nm(s)|$.  For a
unit the index is $1$ and $s\Zphi=\Zphi$: the lattice is mapped onto itself.  For $s=\sqrt5$ the
index is $5$: $\sqrt5\Zphi$ is a proper sublattice, and $(\sqrt5\varphi^{-2})\Zphi^4$ has index
$5^4=625$ in $\Zphi^4$.  This is the arithmetic content of the sentence ``the hereditary core is
not similar to the hull by a unit'' in Table~\ref{tab:bifurcation}.
\item The subring $\Z[\sqrt5]$ is the sublattice $\{(m,n):n\text{ even}\}$ of index $2$, and
$\Zphi=\Z[\sqrt5]\sqcup(\varphi+\Z[\sqrt5])$: two cosets, the ``even'' and the ``odd'' points.
Equation~\eqref{eq:OK} is the statement that the odd coset consists of algebraic integers.
\end{itemize}

\begin{figure}[t]\centering
\begin{tikzpicture}[scale=0.8]
  \draw[->,gray] (-5.8,0)--(5.9,0) node[right,gray]{$x$};
  \draw[->,gray] (0,-3.7)--(0,3.8) node[above,gray]{$\bar x$};
  \foreach \m in {-2,...,2}{\foreach \n in {-2,...,2}{
      \pgfmathsetmacro{\px}{\m + \n*1.61803398875}\pgfmathsetmacro{\py}{\m - \n*0.61803398875}
      \pgfmathparse{int(mod(\n+100,2))}
      \ifnum\pgfmathresult=0 \fill (\px,\py) circle (1.5pt);\else \draw (\px,\py) circle (1.5pt);\fi}}
  \draw[->,very thick,blue] (0,0)--(1,1) node[pos=0.75,above left,black]{$(1,1)$};
  \draw[->,very thick,red] (0,0)--(1.61803398875,-0.61803398875) node[pos=1.0,right,black]{$(\varphi,\bar\varphi)$};
  \draw[dashed,orange!80!black] (-5.5,0.2)--(5.5,0.2) (-5.5,-0.2)--(5.5,-0.2);
\end{tikzpicture}
\caption{$\Zphi$ under $x\mapsto(x,\bar x)$.  Filled points form the subring $\Z[\sqrt5]$ ($n$ even),
open points its coset $\varphi+\Z[\sqrt5]$; together they are the ring of integers.  Projecting to
the $x$-axis gives the dense set $\Zphi\subset\R$; keeping only the points with $\bar x$ in a
window (dashed strip) gives a Fibonacci quasicrystal.  Multiplication by $\varphi$ stretches the
$x$-axis by $\varphi$ and squeezes the $\bar x$-axis by $\varphi^{-1}$ and maps the lattice onto
itself; multiplication by $\sqrt5$ maps it onto a sublattice of index $5$.}\label{fig:mink}
\end{figure}

\subsection{Golden coordinates in dimension \texorpdfstring{$d$}{d}: lattices, modules, icosians}\label{sec:modules}
A \emph{$\Zphi$-module} of rank $d$ in $\Qphi^d$ is a $\Z$-lattice of rank $2d$ (the coordinates
$(m_1,n_1,\dots,m_d,n_d)$ of $m_i+n_i\varphi$) closed under multiplication by $\varphi$.
Membership and index are integer linear algebra: clear denominators, bring the generators to
echelon form, compare pivots.  The modules that matter here are $\Zphi^d$ itself and the
\emph{vertex modules}, the $\Zphi$-spans of the vertex sets of the two polytopes.

\paragraph{Why the pentagon fails and the dodecahedron works.}  The rotation by $72^\circ$ has the
entries $\cos72^\circ=(\varphi-1)/2$ and $\sin72^\circ=\tfrac12\sqrt{\varphi+2}$.  The sine is not
in $\Qphi$: $\Nm\big((\varphi+2)/4\big)=5/16$ is not the square of a rational, so $\varphi+2$ is
not a square in $\Qphi$ and no regular pentagon has all its vertices in $\Qphi^2$.  In three
dimensions the same rotation, about the five-fold axis $n$ of the dodecahedron of
Section~\ref{sec:dodeca} ($|n|^2=\varphi+2$), is
\begin{equation}\label{eq:R72}
  R_{72}=\tfrac12\big((\varphi-1)\,I+[n]_\times+\varphi^{-2}\,nn^{\!\top}\big),\qquad n=(1,\varphi,0),
\end{equation}
where $[n]_\times$ is the cross-product matrix: Rodrigues' formula
$\cos\theta\,I+\sin\theta\,[k]_\times+(1-\cos\theta)\,kk^{\!\top}$ with $k=n/|n|$, in which
$\sin72^\circ/|n|=\tfrac12$ and $(1-\cos72^\circ)/|n|^2=\varphi^{-2}/2$: the sine has been absorbed
into the length of the axis vector, and all entries lie in $\tfrac12\Zphi$.  The full icosahedral group $I_h$ (order $120$) therefore acts on $\Qphi^3$
by matrices with entries in $\tfrac12\Zphi$; it does not preserve $\Zphi^3$, but it does preserve
the vertex module (Result~\ref{res:modules}).  The same mechanism gives $\Zphi^4$ coordinates to
the 120-cell and the 600-cell, and nothing beyond: the finite reflection groups with a golden
invariant are $H_2$, $H_3$, $\Hfour$, and there is no $H_5$ \cite{Coxeter1973}.

\begin{result}[the vertex modules; Appendix~\ref{app:checks}, blocks H and I]\label{res:modules}
Let $\M\subset\Zphi^3$ be the $\Zphi$-span of the $20$ vertices of the dodecahedron and
$\MV\subset\Zphi^4$ the $\Zphi$-span of the $600$ vertices of the 120-cell, in the coordinates of
Sections~\ref{sec:dodeca} and \ref{sec:120cell}.
\begin{enumerate}
\item $[\Zphi^3:\M]=4$; $\M$ is also the $\Zphi$-span of the twelve face normals; the unit vectors
are not in $\M$; the edge midpoints $m_e$ satisfy $2m_e\in\M$ but not all $m_e\in\M$.  $I_h$
preserves $\M$ and not $\Zphi^3$.  $\varphi^2\M=\M$, and $\sqrt5\M$ has index $5^3$ in $\M$.
\item $[\Zphi^4:\MV]=16$.  The $120$ facet normals $n$ of the 120-cell have $|n|=2$ and the
vectors $n/2$ are unit quaternions closed under multiplication: the binary icosahedral group $2I$,
the vertices of the 600-cell, the roots of $\Hfour$.  Their $\Zphi$-span is the \emph{icosian
ring} $\Ico$ \cite{ConwaySloane,Moody1993}, with $[\Ico:\Zphi^4]=16$ and $[\Ico:\MV]=256$.  The
$60$ reflections generating $\Hfour$ preserve $\MV$ and $\Ico$ but not $\Zphi^4$ (the reflection in
$(\varphi,1,\varphi^{-1},0)$ sends $e_1$ to $(1-\varphi^2/2,-\varphi/2,-\tfrac12,0)$).
$\varphi^{-4}\MV=\MV$, and $(\sqrt5\varphi^{-2})\MV$ has index $5^4=625$ in $\MV$.
\item With the bilinear form $2\Tr(\lambda\,x\cdot y)$, $\lambda=\varphi/\sqrt5=(2+\varphi)/5$,
the icosian ring is an even unimodular lattice of rank $8$: it is $E_8$, and its $240$ roots
(vectors of norm $2$) are the $120$ unit icosians $n/2$ together with their $120$ multiples by
$\varphi^{-1}$, two concentric 600-cells in $\R^4$
\cite[\S8.2.1]{ConwaySloane}; the factor $\lambda$ is forced because the dual of $\Zphi$ under
the trace form is $\tfrac1{\sqrt5}\Zphi$.
\end{enumerate}
\end{result}

Item 3 is the lattice from which Elser and Sloane project their four-dimensional quasicrystal
with point group $\Hfour$ \cite{Elser1987}; it is not used below, but it says where the
120-cell's arithmetic ultimately lives, and it is the reason the module $\MV$, not $\Zphi^4$, is
the object preserved by the symmetry.  The invariance in item 2 is what turns a computation on
one representative cell per orbit into a statement about every cell (Result~\ref{res:vert4}).

\section{The stellation dissection}\label{sec:definition}

\subsection{Stellation cores}
Let $C$ be a regular polytope in $\R^d$ and consider the arrangement of its facet hyperplanes.
The bounded cells of this arrangement are the \emph{stellation cells} of $C$, and a stellation of
$C$ is a union of such cells with the symmetry of $C$ \cite{Coxeter1938,Coxeter1973}.  Among the
stellations of the pentagon there is one regular star polygon, the pentagram $\{5/2\}$; among
those of the dodecahedron there are three regular ones, the small stellated dodecahedron
$\{5/2,5\}$, the great dodecahedron $\{5,5/2\}$ and the great stellated dodecahedron $\{5/2,3\}$;
among those of the 120-cell there are nine regular ones, the Schl\"afli--Hess polychora other
than the grand 600-cell \cite{Coxeter1973}.  In each dimension the last of them, the
\emph{final regular stellation} $S$, is the one whose vertices lie on a copy of $C$ itself:
$\conv S=\lambda C$ for some $\lambda>1$.

\begin{definition}\label{def:cut}
Let $C$ be a regular polytope with golden geometry, $S$ its final regular stellation and
$B=\conv S$.  The \emph{stellation dissection} of $B$ is the dissection of $B$ by the facet hyperplanes
of $C$, the arrangement whose bounded regions are the stellation cells of $C$
\cite{Coxeter1938,Wenninger1971}.  Its \emph{depth} is $t=r(C)/r(B)=\lambda^{-1}$, the ratio of
inradii.  The pieces of the dissection are the stellation cells of $C$ inside $B$ together with
the unbounded regions of the arrangement truncated by $B$.
\end{definition}

By construction the star $S$ is a union of pieces of the stellation dissection (indeed of
stellation cells), and the core $C$ is one of the pieces.

\begin{result}\label{res:depth}
The depth of the stellation dissection is $t=\varphi^{-2}$ for the pentagon, $\varphi^{-3}$ for the
dodecahedron and $\varphi^{-4}$ for the 120-cell.  In every case $t$ is a unit of $\Zphi$.
\end{result}

For the pentagon and the dodecahedron this is classical: the pentagram's outer vertices lie on the
pentagon $\varphi^2 C$ and the great stellated dodecahedron's on $\varphi^3C$
\cite{Coxeter1973,Wenninger1971}.  For the 120-cell we checked it with the tool
\texttt{stellation\_shells}: intersect all $4$-tuples of the $120$ facet hyperplanes of $C$
(inradius $1$), group the $752\,160$ intersection points by their distance from the centre
($142$ shells), and keep the shells that are congruent to the vertex set of $C$ and put a
facet-sized cluster of $20$ points on each facet hyperplane.  The $600$-point shells at powers of
$\varphi$ lie at $\varphi^0,\dots,\varphi^4$; of these, only $\varphi^4$ (besides $\varphi^0$, the
core itself) puts exactly $20$ points on each facet hyperplane, where they form a dodecahedron
$\varphi^6$ times the facet of $C$.  These are the $600$ vertices of the great grand stellated
120-cell $\{5/2,3,3\}$ \cite{Coxeter1973,WikiGGS}.  The shells at $\varphi^1,\varphi^2,\varphi^3$
put $60$ points on each hyperplane and belong to stars with $120$ vertices.  The incidence that
matters, that the hyperplane $n\cdot x=\varphi^{-4}\cdot r(B)$ contains a $20$-vertex layer of
$B$, is verified exactly in Section~\ref{sec:120cell}.

\subsection{The pentagon}\label{sec:pentagon}
With circumradius $1$ and vertices $v_k=(\cos\theta_k,\sin\theta_k)$, $\theta_k=90^\circ+72^\circ k$,
the five diagonal lines $v_kv_{k+2}$ are the side lines of the inner pentagon of circumradius
$\varphi^{-2}$, at distance $\varphi^{-2}\cos36^\circ=\varphi^{-1}/2$ from the centre.  The eleven
pieces (Figure~\ref{fig:pentagon}) are
\begin{itemize}
\item the inner pentagon, of area $\varphi^{-4}\approx14.6\%$ of the whole;
\item five \emph{points}: golden triangles with angles $36^\circ,72^\circ,72^\circ$, base a side
$s$ of the inner pentagon and legs $\varphi s$, together $(5-2\sqrt5)/\varphi\approx32.6\%$;
\item five \emph{gnomons}: golden gnomons with angles $36^\circ,36^\circ,108^\circ$, with equal
sides $\varphi s$ along the diagonals and long side a side $\varphi^2s$ of the large pentagon,
together $5-2\sqrt5\approx52.8\%$.
\end{itemize}
Each diagonal has length $\varphi^3s$ and is divided by the other diagonals into segments
$\varphi s,\ s,\ \varphi s$, so a point and a gnomon share the leg $\varphi s$ and their areas are in
the ratio $\sin36^\circ:\sin108^\circ=1:\varphi$; with $\vol(\text{core})=\varphi^{-4}$ this
fixes all the fractions.  The pieces come in three congruence classes, with multiplicities
$1,5,5$, which are the orbits of the dihedral group $D_5$ (order $10$) on the pieces.  All ratios
of lengths and areas are in $\Qphi$; the coordinates are not (Section~\ref{sec:modules}), so the
pentagon needs no computer and gets none.

\begin{figure}[t]
  \centering
  \begin{tikzpicture}[scale=2.6, line join=round]
    \def\R{1}\def\r{0.381966}
    \foreach \kk in {0,...,4} { \coordinate (v\kk) at ({90+72*\kk}:\R); \coordinate (u\kk) at ({54+72*\kk}:\r); }
    \fill[blue!25!white] (u0)--(u1)--(u2)--(u3)--(u4)--cycle;
    \foreach \kk/\kn in {0/1, 1/2, 2/3, 3/4, 4/0} { \fill[red!55!orange!60] (v\kk)--(u\kk)--(u\kn)--cycle; }
    \foreach \kk/\kn in {0/1, 1/2, 2/3, 3/4, 4/0} { \fill[green!50!teal!40] (v\kk)--(v\kn)--(u\kn)--cycle; }
    \draw[very thin,black!60] (v0)--(v1)--(v2)--(v3)--(v4)--cycle;
    \foreach \kk/\kn in {0/2, 1/3, 2/4, 3/0, 4/1} { \draw[thick,black!80] (v\kk)--(v\kn); }
  \end{tikzpicture}\hspace{1.5cm}%
  \begin{tikzpicture}[scale=2.6, line join=round]
    \def\R{1}\def\r{0.381966}
    \foreach \kk in {0,...,4} { \coordinate (v\kk) at ({90+72*\kk}:\R); \coordinate (u\kk) at ({54+72*\kk}:\r); }
    \def\ex{0.5}
    \fill[blue!25!white,draw=black!70,very thin] (u0)--(u1)--(u2)--(u3)--(u4)--cycle;
    \foreach \kk/\kn in {0/1, 1/2, 2/3, 3/4, 4/0} {
      \coordinate (c) at (barycentric cs:v\kk=1,u\kk=1,u\kn=1);
      \fill[red!55!orange!60,draw=black!70,very thin] ($(v\kk)+\ex*(c)$)--($(u\kk)+\ex*(c)$)--($(u\kn)+\ex*(c)$)--cycle;
    }
    \foreach \kk/\kn in {0/1, 1/2, 2/3, 3/4, 4/0} {
      \coordinate (c) at (barycentric cs:v\kk=1,v\kn=1,u\kn=1);
      \fill[green!50!teal!40,draw=black!70,very thin] ($(v\kk)+\ex*(c)$)--($(v\kn)+\ex*(c)$)--($(u\kn)+\ex*(c)$)--cycle;
    }
  \end{tikzpicture}
  \caption{The stellation dissection of the pentagon: the five diagonals (left) and the eleven pieces
  exploded (right): the core pentagon, five points (golden triangles) and five gnomons.}
  \label{fig:pentagon}
\end{figure}

\subsection{Pieces as sign vectors}\label{sec:signvectors}
Let the cut hyperplanes be $h_k(x)=n_k\cdot x-c_k$, $k=1,\dots,m$.  A point $x$ on none of them
has a \emph{sign vector} $\sigma(x)=(\sgn h_k(x))_k\in\{\pm\}^m$, and the piece containing $x$ is
exactly $\{y\in B:\sigma(y)=\sigma(x)\}$: the pieces of an arrangement are its non-empty sign
classes.  Two things follow.  First, \emph{which piece a point lies in} needs no cutting: $m$ exact
signs of Section~\ref{sec:sign}, with no recursion and no state ($12$ signs for the dodecahedron,
$120$ for the 120-cell; Section~\ref{sec:algorithm}).  Second, a piece has a canonical name, and
the symmetry group of $B$, which permutes the cut hyperplanes, acts on names by permuting
coordinates, so that congruence classes of pieces become orbits of sign vectors
(Section~\ref{sec:orbits}).  With the convention that $+$ means ``on the far side of the
hyperplane from the centre'', the number of plus signs is the number of cut hyperplanes the piece
lies beyond, the \emph{layer} of the stellation diagram.  Figure~\ref{fig:pentasign} shows the
eleven names of the pentagon.

\begin{figure}[t]\centering
\begin{tikzpicture}[scale=2.2]
  \foreach \k in {0,...,4}{\coordinate (v\k) at (90+72*\k:1);}
  \draw[thick] (v0)--(v1)--(v2)--(v3)--(v4)--cycle;
  \foreach \k in {0,...,4}{\pgfmathtruncatemacro{\j}{mod(\k+2,5)}\draw[blue!60!black] (v\k)--(v\j);}
  \node[font=\scriptsize] at (0,0) {$-----$};
  \foreach \k in {0,...,4}{
    \pgfmathtruncatemacro{\a}{mod(\k+1,5)}
    \node[font=\scriptsize,fill=white,inner sep=1pt] at (90+72*\k:0.80) {$d_{\k}{+}$};
    \node[font=\scriptsize,fill=white,inner sep=1pt] at (126+72*\k:0.62) {$d_{\k}{+}d_{\a}{+}$};
  }
\end{tikzpicture}
\caption{The eleven pieces of the pentagon named by their sign vectors against the diagonals
$d_0,\dots,d_4$ ($d_k$ cuts off vertex $k$; only the $+$ entries are written).  Core: weight $0$;
the five points: weight $1$; the five gnomons: weight $2$.  Of the $2^5=32$ possible sign vectors,
$11$ are realised.  With $12$ planes the same naming gives the $165$ pieces of the dodecahedron
(Result~\ref{res:orbits}), with $120$ hyperplanes the $\NSTAR$ or $\NHERED$ pieces of the 120-cell.}
\label{fig:pentasign}
\end{figure}

\subsection{Vertex layers}\label{sec:layers}
There is a second way to see the cut, from the outside.  Project the vertices of $B$ onto a facet
normal: they fall into layers, and the heights of the layers are elements of $\Qphi$.  For the
pentagon the layers along a side normal contain $2,2,1$ vertices; for the dodecahedron, $5,5,5,5$
at heights $\pm1,\pm\varphi^{-3}$ of the inradius; for the 120-cell, with heights as fractions of
the inradius,
\begin{equation}\label{eq:layers}
  \setlength{\arraycolsep}{3pt}\scriptstyle
  \begin{array}{c|cccccccc}
    \text{height} & \pm1 & \pm(1-\varphi^{-4}) & \pm2\varphi^{-2} & \pm\varphi^{-1} & \pm\varphi^{-2} & \pm\varphi^{-3} & \pm\varphi^{-4} & 0\\
    & \pm1 & \pm0.8541 & \pm0.7639 & \pm0.6180 & \pm0.3820 & \pm0.2361 & \pm0.1459 & 0\\ \hline
    \text{vertices} & 20 & 20 & 30 & 60 & 60 & 60 & 20 & 60
  \end{array}
\end{equation}
(all fifteen heights and counts are recomputed exactly in Appendix~\ref{app:checks}, block~I).
In every dimension the cut hyperplanes pass through the vertex layer of $B$ nearest to the centre
(the pentagon's diagonals through the two vertices next to a side, the dodecahedron's planes through
the five vertices adjacent to a face).  For the pentagon and the dodecahedron this layer is also the
layer \emph{adjacent to a facet}.  For the 120-cell the two layers are different: the layer nearest
the centre has $20$ vertices at height $\varphi^{-4}$, and the layer adjacent to a facet has $20$
vertices at height $1-\varphi^{-4}=3\varphi-4$.  The definition therefore bifurcates
(Table~\ref{tab:bifurcation}), and the ring tells the two branches apart before any cell is
computed: $\varphi^{-4}$ is a unit and $3\varphi-4=\sqrt5\,\varphi^{-2}$ is not.

\begin{table}[t]
  \centering\footnotesize
  \begin{tabular}{lll}
    \toprule
    & star cut & hereditary cut\\
    \midrule
    depth $t$ & $\varphi^{-4}=5-3\varphi\approx0.1459$ & $1-\varphi^{-4}=3\varphi-4\approx0.8541$\\
    $t$ as a golden scale & $\varphi^{-4}$, a unit, $\Nm=1$ & $\sqrt5\,\varphi^{-2}$, $\Nm=-5$\\
    hyperplanes (\S\ref{sec:120cell}) & $n\cdot x=2\varphi^{-2}=4-2\varphi$ & $n\cdot x=2\sqrt5$\\
    characterisation & $\{5/2,3,3\}$ is a union of cells & facets get the 3D dissection\\
    core $tB$ and the module $\MV$ & $tB$ has vertices in $\MV$ ($t\MV=\MV$) & $t\MV\subsetneq\MV$, index $5^4$\\
    hyperplanes meeting a facet & $42$ & $12$\\
    pieces on a facet & $3\,655$ & $165$\\
    vertices of the pieces (Result~\ref{res:vert4}) & denominators $1,2,3,5$; not all in $\MV$ & all in $\MV\subset\Zphi^4$\\
    cells / orbits & $\NSTAR$ / $\NSTARCLASSES$ & $\NHERED$ / $\NHEREDCLASSES$\\
    \bottomrule
  \end{tabular}
  \caption{The two stellation dissections of the 120-cell.}\label{tab:bifurcation}
\end{table}

The \emph{star cut} is Definition~\ref{def:cut}.  The \emph{hereditary cut} is the one that
passes the construction down to the facets: each dodecahedral facet of $B$ is cut by exactly the
twelve planes of its own three-dimensional stellation dissection, whose pentagonal faces are cut
by their diagonals.  We compute both.

\section{The dodecahedron}\label{sec:dodeca}

Take the dodecahedron $B$ with vertices $(\pm\varphi,\pm\varphi,\pm\varphi)$ and the cyclic
permutations of $(0,\pm\varphi^2,\pm1)$.  Its twelve faces are $n\cdot x\le\varphi^3$ for the cyclic
permutations $n$ of $(\pm1,\pm\varphi,0)$, with $|n|^2=\varphi+2=\sqrt5\varphi$; the inradius is
$\varphi^3/\sqrt{\varphi+2}$, the circumradius $\varphi\sqrt3$, the edge $2$.  The stellation
dissection is the cut by the twelve planes $n\cdot x=1$, which pass through five vertices of $B$
each: $n\cdot v\in\{\pm1,\pm\varphi^3\}$ for every vertex $v$ and every normal $n$, the two layers
of Section~\ref{sec:layers}, at $\varphi^{-3}$ and $1$ of the inradius.  Everything is in
$\Zphi^3$.  (The copy of midradius $1$, with vertices $(\pm\varphi^{-1})^3$ and the cyclic
permutations of $(0,\pm1,\pm\varphi^{-2})$, is this one divided by the unit $\varphi^2$; module
statements hold in both normalisations, and the checks of Appendix~\ref{app:checks} use the
smaller copy, with faces $n\cdot x=1$ and cut planes $n\cdot x=\varphi^{-3}$.)

\begin{result}[the seven pieces]\label{res:seven}
The twelve planes cut $B$ into $165$ convex pieces in the seven congruence classes of
Table~\ref{tab:dodeca}, which are the orbits of the icosahedral group $I_h$ (order $120$).  All
pieces are closed $2$-manifolds satisfying Euler's formula and their volumes add up to $\vol B$
with relative error $7\cdot10^{-16}$.  Counting the number of cut planes a piece lies beyond
(its \emph{layer}), the layers $0,1,2,3$ contain $1,12,30,20$ pieces, whose union is the great
stellated dodecahedron; the remaining $102$ pieces fill the gap between the star and $B$.
\end{result}

\begin{table}[t]
  \centering\footnotesize
  \begin{tabular}{rlrllcr}
\toprule
class & piece & copies & $V$, $F$ (face sizes) & edge lengths (count) & layer & volume \%\\
\midrule
1 & core dodecahedron & 1 & $20$, $12$ ($5^{12}$) & $1(30)$ & 0 & 1.32\\
5 & small tent & 12 & $6$, $6$ ($3^{5} 5$) & $1(5)\;\varphi(5)$ & 1 & 1.63\\
6 & small tetrahedron & 30 & $4$, $4$ ($3^{4}$) & $1(1)\;\varphi(4)\;\varphi^2(1)$ & 2 & 1.82\\
2 & spike (triangular bipyramid) & 20 & $5$, $6$ ($3^{6}$) & $\varphi(3)\;\varphi^2(3)\;\varphi^3(3)$ & 3 & 15.40\\
0 & wedge & 30 & $6$, $8$ ($3^{8}$) & $\varphi^2(7)\;\varphi^3(5)$ & 4 & 57.55\\
4 & tetrahedron $T_2$ & 60 & $4$, $4$ ($3^{4}$) & $\varphi(1)\;\varphi^2(4)\;\varphi^3(1)$ & 5 & 15.40\\
3 & tent (pentagonal pyramid) & 12 & $6$, $6$ ($3^{5} 5$) & $\varphi(5)\;\varphi^2(5)$ & 6 & 6.89\\
\bottomrule
\end{tabular}

  \caption{The seven pieces of the stellation dissection of the dodecahedron.  Edge lengths are given as
  powers of $\varphi$ times the edge of the core (the face sizes are listed as $k^m$ for $m$ faces
  with $k$ vertices).  Layer $k$ means the piece lies beyond $k$ of the twelve cut planes; layers
  $0$--$3$ form the great stellated dodecahedron.}\label{tab:dodeca}
\end{table}

The names in Table~\ref{tab:dodeca} are Kramer's where we could match them.  His ``tent'' is a
pentagonal pyramid with base edge $1$ and five Robinson triangles $(1,\tau,\tau)$ as sides
\cite{Kramer1982,Koca2020}; our tents have base $\varphi^k$ and lateral edges $\varphi^{k+1}$, the
same shape.  His tetrahedron $T_2$ has faces $2\times(1,\tau,\tau)$ and $2\times(\tau^2,\tau,\tau)$,
that is, edges $1,\tau^2$ and four of length $\tau$ \cite{Koca2020}; our $90$ tetrahedra have edges
$\varphi^k,\varphi^{k+2}$ and four of length $\varphi^{k+1}$.  Since his seven cells pack a
dodecahedron and inflate by $\tau^3$ exactly as these do, we are confident that the two sets
coincide, although we could not consult the original paper.  The second-layer pieces are the $30$
small tetrahedra (the ``wedges'' of the great dodecahedron in Wenninger's terminology), the
third-layer pieces are the $20$ triangular bipyramids that form the spikes of the great stellated
dodecahedron, and the first layer consists of the $12$ small tents that turn the core into the
small stellated dodecahedron.  Figure~\ref{fig:dodeca} shows the cut assembled, exploded, and with
the star highlighted.

\begin{figure}[t]
  \centering
  \fig[0.325\linewidth]{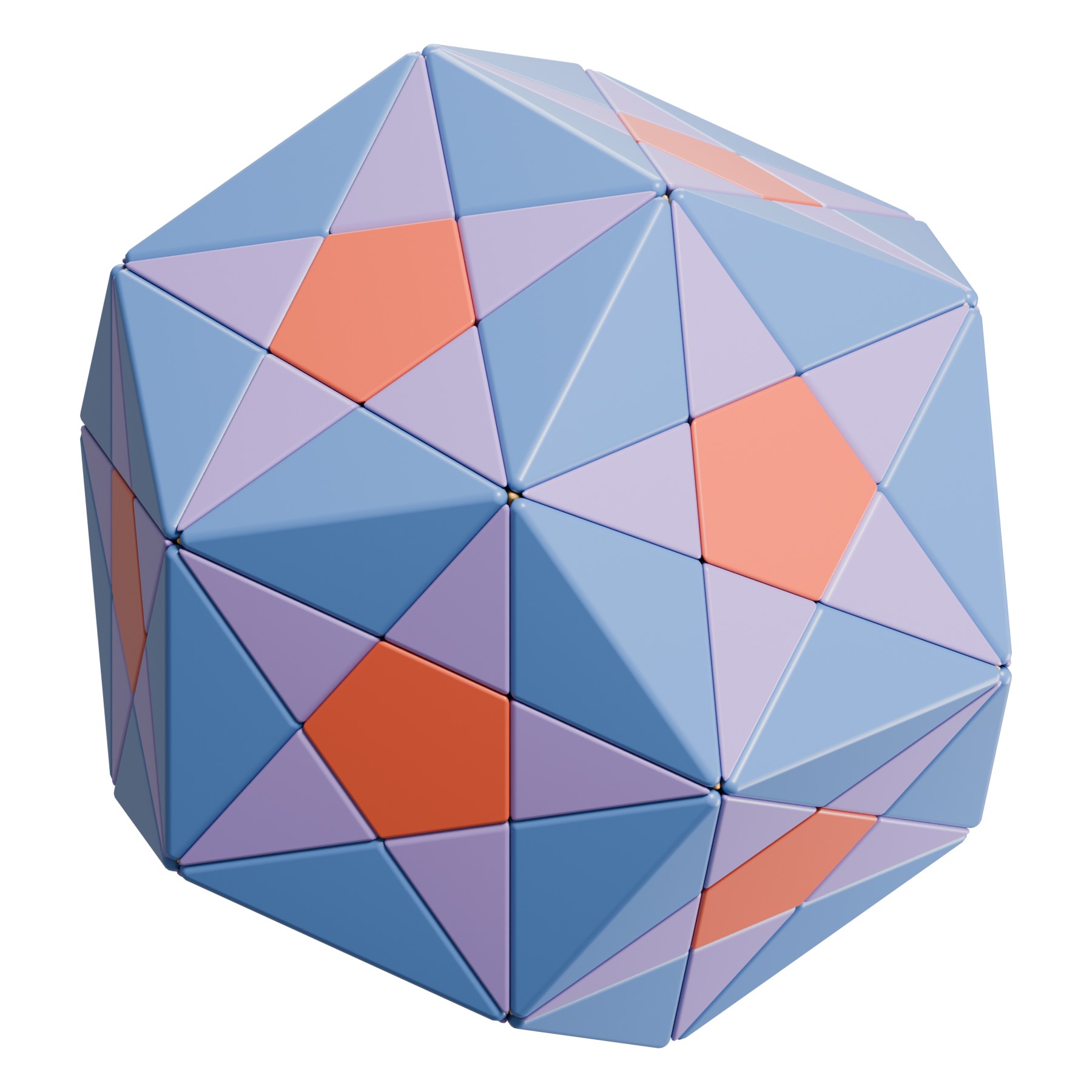}\hfill
  \fig[0.325\linewidth]{figures/dodeca_phi3_exploded.jpg}\hfill
  \fig[0.325\linewidth]{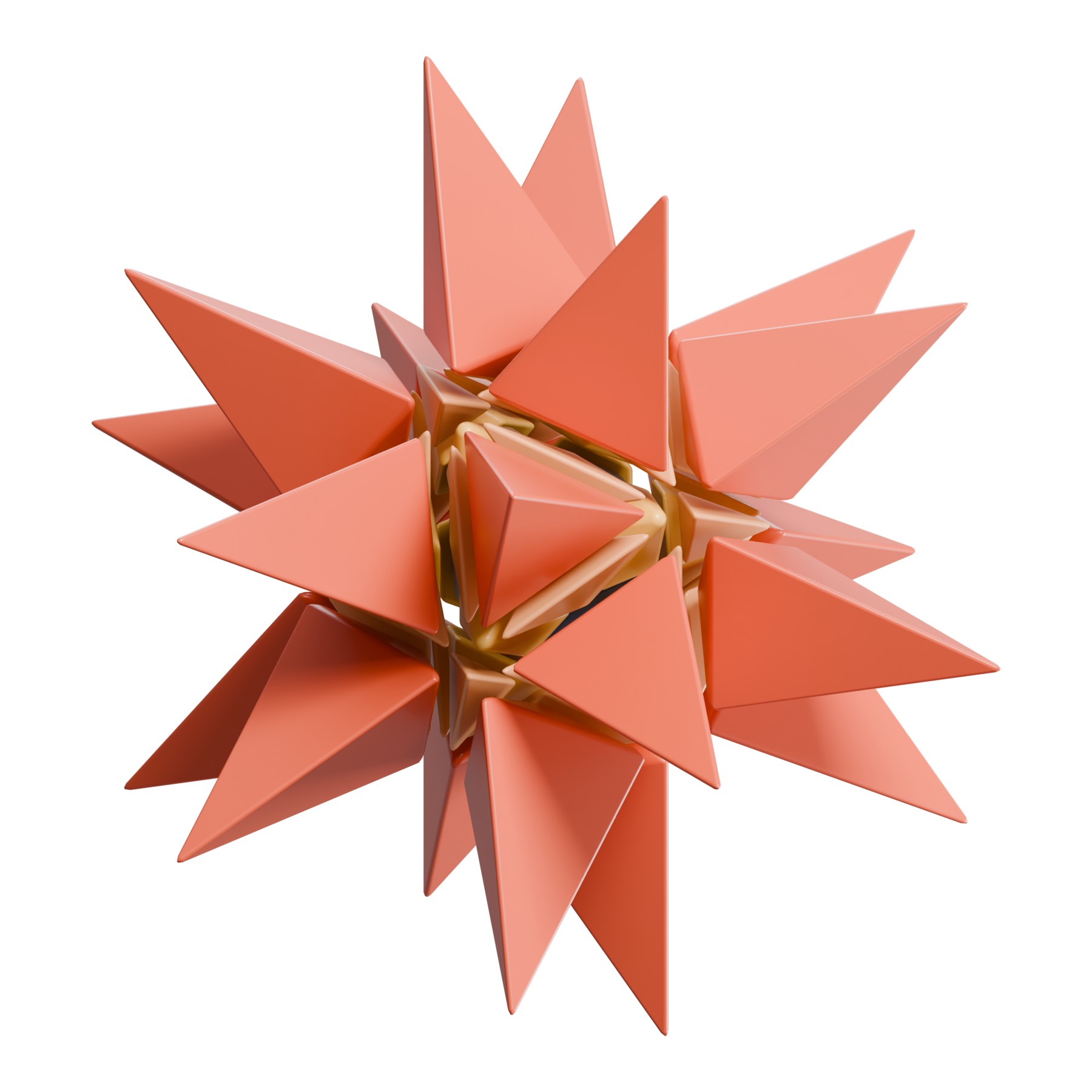}
  \caption{The stellation dissection of the dodecahedron: assembled, exploded (coloured by class), and
  exploded with the $63$ pieces of the great stellated dodecahedron in red and the core in blue.}
  \label{fig:dodeca}
\end{figure}

\begin{figure}[t]
  \centering
  \begin{tikzpicture}
    \node[anchor=south west,inner sep=0] (img) at (0,0) {\fig[0.9\linewidth]{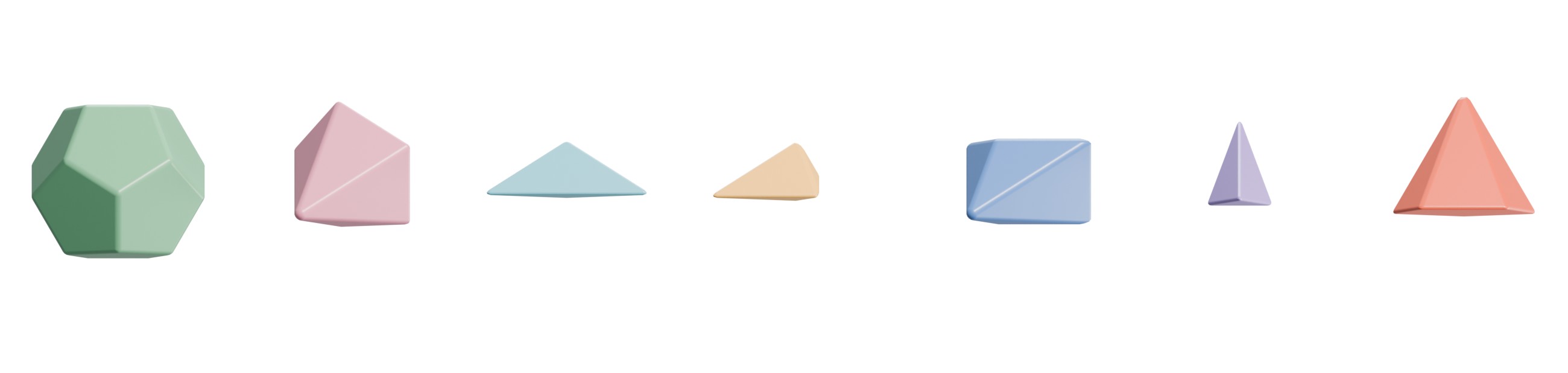}};
    \begin{scope}[x={(img.south east)},y={(img.north west)}]
      \IfFileExists{figures/dodeca_classes_labels.tex}{\input{figures/dodeca_classes_labels.tex}}{}
    \end{scope}
  \end{tikzpicture}
  \caption{One representative of each of the seven pieces, in the order of
  Table~\ref{tab:dodeca} and scaled to a common size (class number and multiplicity below each
  piece).}
  \label{fig:dodeca_classes}
\end{figure}

\subsection{The vertices of the pieces are module points}\label{sec:vert3}
The vertices of the pieces are intersections of three of the $24$ planes (twelve faces, twelve
cuts), that is, Cramer quotients $X/W$ with $X\in\Zphi^3$ and $W\in\Zphi$ a $3\times3$ determinant
of normals.  Nothing in this forces the quotient to be integral.  It is.

\begin{result}[Appendix~\ref{app:checks}, block~E]\label{res:vert3}
The $24$ planes meet in $112$ distinct points inside $B$, which are exactly the $112$ distinct
vertices of the $165$ pieces.  All $112$ have coordinates in $\Zphi$ with no denominators, and all
$112$ lie in the vertex module $\M$ of Result~\ref{res:modules}.  Of them $20$ are vertices of $B$,
$60$ lie on its boundary and $32$ are interior; $80$ are simple (three planes), $12$ lie on five
planes (one on each five-fold axis) and $20$ on six.  Every determinant $W$ that occurs has
$|\Nm(W)|=4$, so $W$ is $2$ times a unit and the quotient $X/W$ lies a priori in $\tfrac12\Zphi^3$;
the halves always cancel.
\end{result}

So the dissection of the dodecahedron is a dissection \emph{of the module $\M$}: its pieces are
lattice polytopes of $\M$ in the strict sense, and the whole figure, pieces and star included, can
be drawn on the twenty-point module without ever leaving it.  The same question has a different
answer for one of the two four-dimensional cuts (Result~\ref{res:vert4}).

\subsection{The seven classes as orbits of sign vectors}\label{sec:orbits3}
Section~\ref{sec:signvectors} names each piece by its sign vector against the twelve cut planes.
The group $I_h$ acts on the twelve planes by permutation (the set of cut planes is invariant),
hence on sign vectors by permuting coordinates, and the \emph{canonical form} of a piece is the
lexicographically least vector in its orbit.  Everything is exact and combinatorial once the
$120$ permutations are known, and they come from the exact matrices of Section~\ref{sec:modules}.

\begin{result}[Appendix~\ref{app:checks}, block~F]\label{res:orbits}
The $165$ pieces have $165$ distinct sign vectors (every centroid is at least $0.045$ from every
cut plane, so no sign is in doubt).  Under the $120$ permutations of the twelve cut planes induced
by $I_h$ they fall into exactly $7$ orbits, of sizes $1,12,12,20,30,30,60$, each equal to one of the
classes of Table~\ref{tab:dodeca}.  Writing $w(\sigma)$ for the number of $+$ signs:
\begin{center}\small
\begin{tabular}{@{}lrll@{}}\toprule
piece & size & $w$ & canonical sign vector\\\midrule
core & 1 & 0 & \lstinline[columns=fixed]|------------|\\
small tent & 12 & 1 & \lstinline[columns=fixed]|-----------+|\\
small tetrahedron & 30 & 2 & \lstinline[columns=fixed]|----------++|\\
spike & 20 & 3 & \lstinline[columns=fixed]|---------+++|\\
wedge & 30 & 4 & \lstinline[columns=fixed]|--------++++|\\
tetrahedron $T_2$ & 60 & 5 & \lstinline[columns=fixed]|-------+++++|\\
tent & 12 & 6 & \lstinline[columns=fixed]|-----+++++-+|\\\bottomrule
\end{tabular}
\end{center}
The weights $0,1,2,3$ are the four stellation layers (core, small stellated, great, great
stellated dodecahedron) and the gap fillers have weights $4,5,6$.  Sampling $4\cdot10^5$ uniform
random points of $B$ and recording their sign vectors recovers all $165$ names and nothing else.
\end{result}

This replaces, for the dodecahedron, the floating-point classification of
Section~\ref{sec:orbits} by an exact one.  Of the $2^{12}=4096$ conceivable sign vectors $165$
are realised; deciding whether a given vector is realised is a linear feasibility problem over
$\Qphi$, and listing all realised vectors is exactly what the cutting algorithm does.  So the cut
is unavoidable for the \emph{catalogue} and avoidable for \emph{membership}, which is what
rendering, sampling and ray-marching need.

\subsection{Other depths}\label{sec:depths}
Nothing forces the twelve planes to pass through the vertices.  Cutting $B$ by the planes parallel
to its faces at signed distance $t$ times the inradius, $0<t<1$, gives a one-parameter family in
which the stellation dissection is one member.

\begin{result}[phase diagram]\label{res:phase}
The number of pieces is $185$ for $0<t<\varphi^{-3}$, $165$ for $\varphi^{-3}\le t<1/\sqrt5$, $93$
for $1/\sqrt5\le t<\varphi^{-1}$ and $63$ for $\varphi^{-1}\le t<1$ (and $32$ at $t=0$, where the
twelve planes become six).  The combinatorial type is constant on each interval: we checked it at
$2\,000$ equally spaced rational depths and exactly at the three critical depths.  At every depth
twelve points of the arrangement lie on five cut planes at once (one on each five-fold axis), so
the arrangement is never simple.
\end{result}

The critical depths are incidence events, and the ring predicts them: along a face normal the
vertices of $B$ lie at heights $\pm1,\pm\varphi^{-3}$ of the inradius, the face centres at
$\pm1,\pm1/\sqrt5$ (adjacent unit normals have inner product $1/\sqrt5$) and the edge midpoints at
$\pm1,\pm\varphi^{-1},0$ (Appendix~\ref{app:checks}, block~D).  At $t=\varphi^{-3}$ the cut planes
reach the $20$ vertices; at $t=1/\sqrt5$ the five-fold concurrency points reach the $12$ face
centres; at $t=\varphi^{-1}$ pairs of cut planes meet the $30$ edge midpoints.  Each of the three
numbers is a golden scale (Table~\ref{tab:scales}), and they are ordered
$\varphi^{-3}<1/\sqrt5<\varphi^{-1}$ by the exact sign test.
For $t<\varphi^{-3}$ the $185$ pieces fall into eight classes ($30$ eight-vertex pieces, $12$
pentagonal pyramids, the core, $20$ triangular bipyramids, $60$ triangular prisms, $12$ smaller
pentagonal pyramids, $30$ tetrahedra and $20$ small bipyramids): the $60$ prisms are what the
tetrahedra of the stellation dissection become when the planes no longer pass through the vertices.
Figure~\ref{fig:regimes} shows the four regimes.

\begin{figure}[t]
  \centering
  \fig[0.245\linewidth]{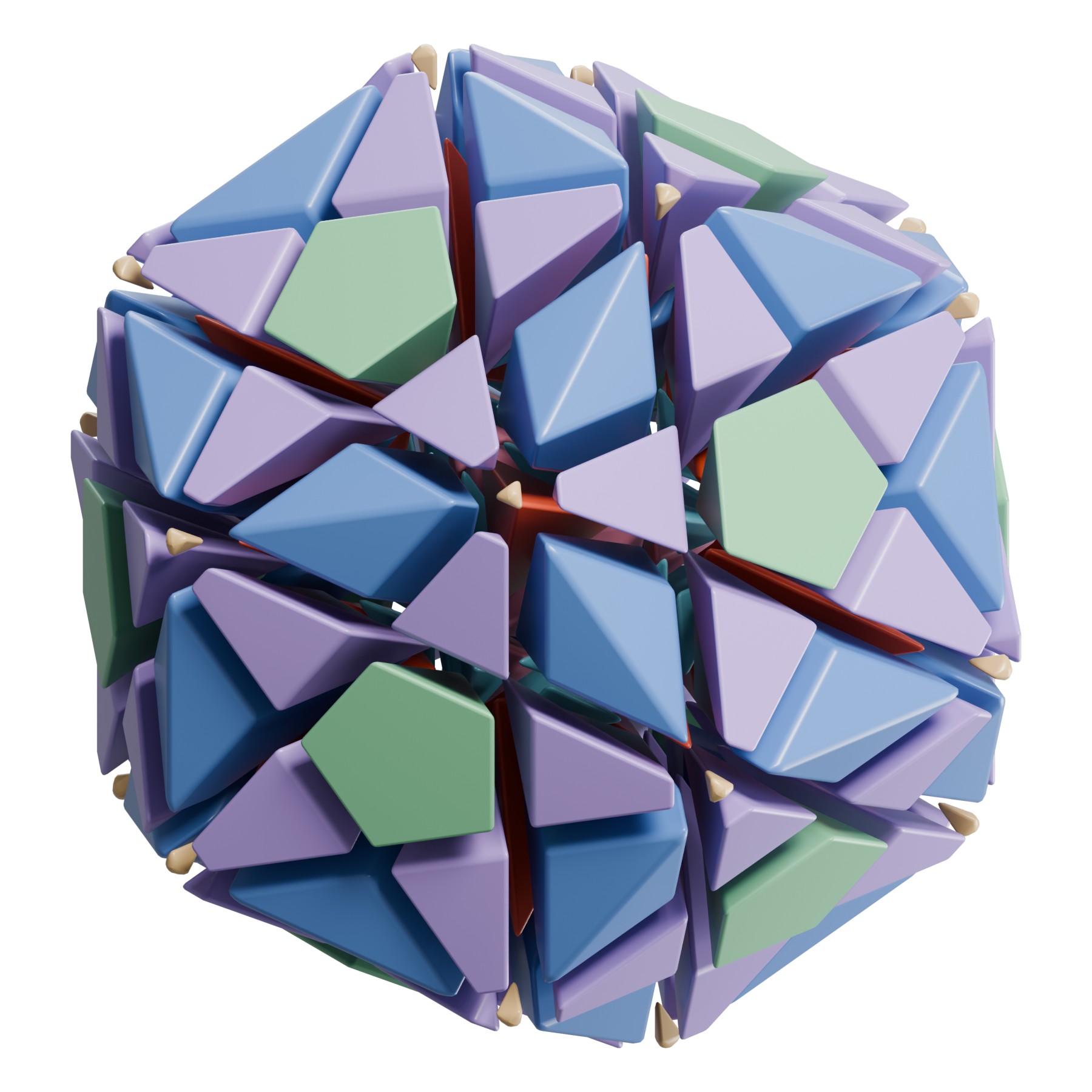}\hfill
  \fig[0.245\linewidth]{figures/dodeca_phi3_exploded.jpg}\hfill
  \fig[0.245\linewidth]{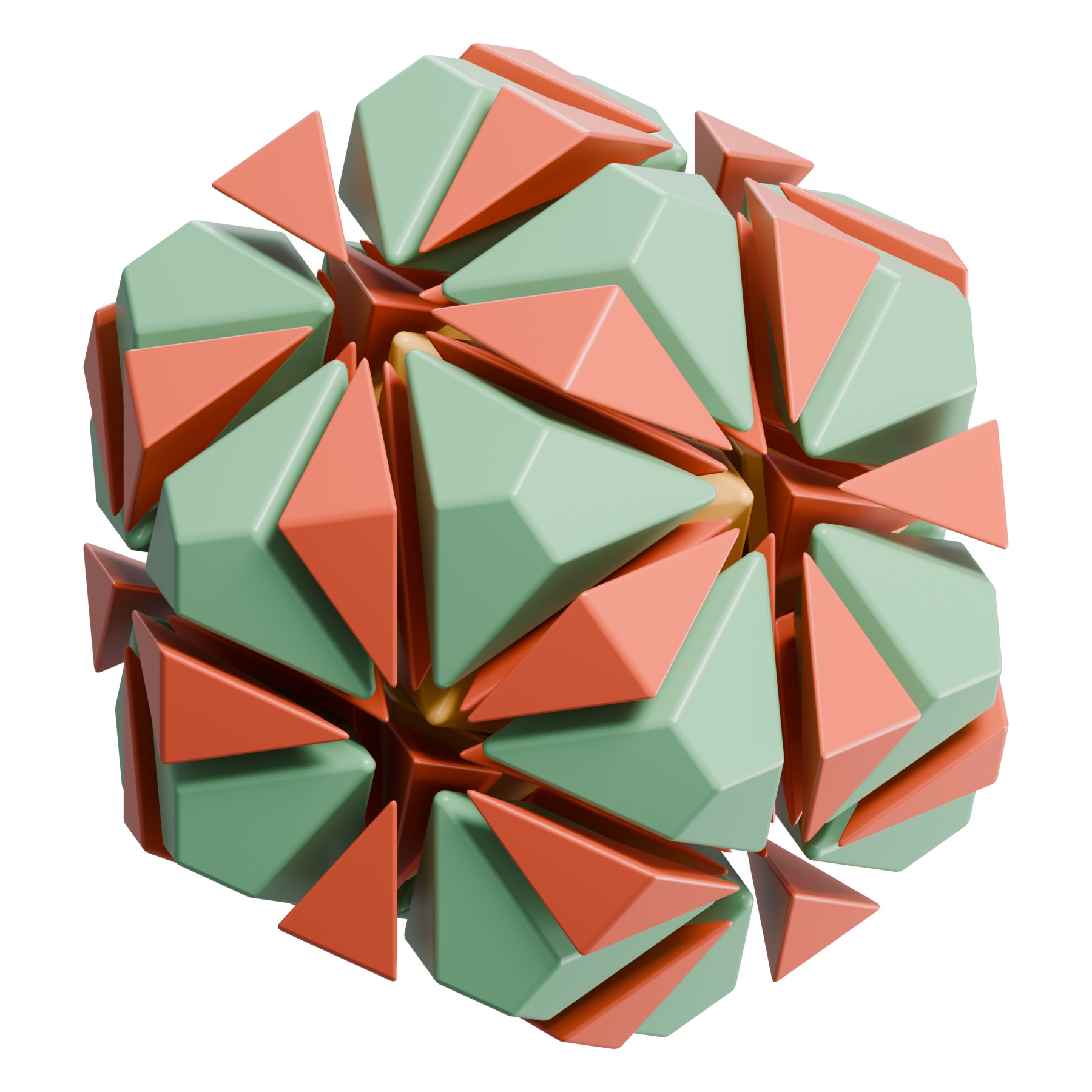}\hfill
  \fig[0.245\linewidth]{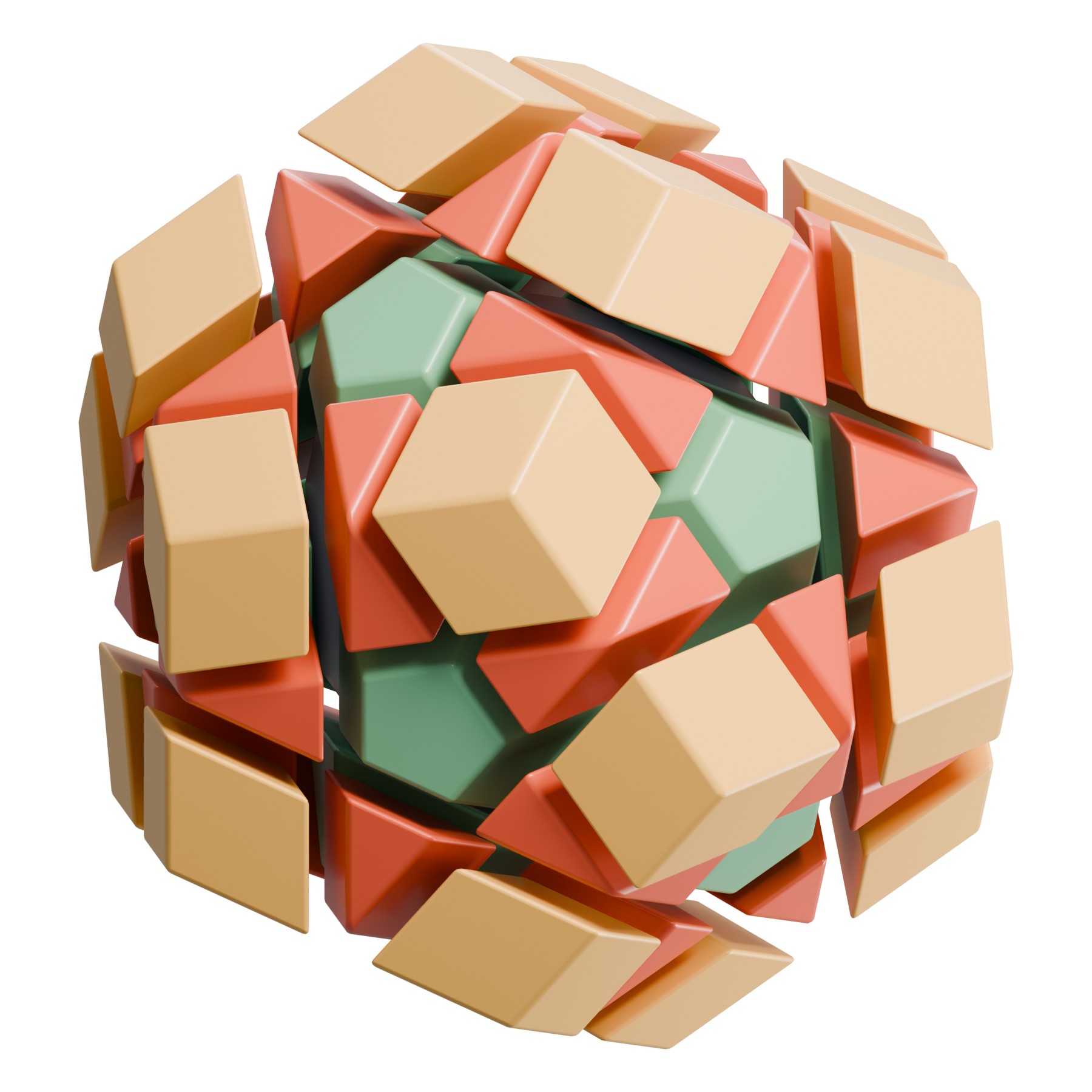}
  \caption{The four regimes of the face-parallel cut of the dodecahedron, exploded:
  $t=1/5$ ($185$ pieces), $t=\varphi^{-3}$ (the stellation dissection, $165$), $t=1/\sqrt5$ ($93$),
  $t=\varphi^{-1}$ ($63$).}
  \label{fig:regimes}
\end{figure}

\subsection{Why exact arithmetic}\label{sec:float}
The five-fold concurrencies are the reason this innocent-looking computation defeats floating
point.  We ran the algorithm of Section~\ref{sec:algorithm} in double precision with raw signs:
it produced $18$ non-manifold pieces at $t=1/5$, crashed at $t=\varphi^{-3}$ and reported $14$
pieces at $t=1/\sqrt5$.  With a tolerance $\varepsilon=10^{-14}$ it crashed at $t=17/240$;
tolerances between $10^{-12}$ and $10^{-4}$ work at the depths we tried but are input-dependent,
and a large tolerance erases genuine pieces: at $t=236\,067\,977/10^9$, within
$5\cdot10^{-10}$ of $\varphi^{-3}$, the exact computation correctly finds $20$ slivers of volume
$5.6\cdot10^{-28}$.  (This project began with a floating-point ``blade'' program written by one
of us in 2022; revived, it reproduced its archived $235$ pieces byte for byte, but none of them is
watertight and a single typo in four plane normals had created $36$ spurious slivers.  The
correct count at its depth, $t\approx0.205$, is $185$.)  In the language of
Section~\ref{sec:sign}: a tolerance guesses whether $m+n\varphi$ is zero, and the ring knows.  In
dimension four the number of degenerate incidences is far larger (up to $20$ hyperplanes through
one vertex, Result~\ref{res:vert4}), and we did not attempt a floating-point computation at all.

\section{The 120-cell and its cut hyperplanes}\label{sec:120cell}

This section contains everything needed to set up the four-dimensional cut.

\paragraph{Vertices.}  The 120-cell $B$ of circumradius $2\sqrt2$ and edge $2\varphi^{-2}$ has the
$600$ vertices
\begin{equation}\label{eq:vertices}
\begin{array}{ll}
\text{all permutations of} & (0,0,\pm2,\pm2),\ (\pm1,\pm1,\pm1,\pm\sqrt5),\\
  & (\pm\varphi^{-2},\pm\varphi,\pm\varphi,\pm\varphi),\ (\pm\varphi^{-1},\pm\varphi^{-1},\pm\varphi^{-1},\pm\varphi^2),\\[2pt]
\text{even permutations of} & (0,\pm\varphi^{-2},\pm1,\pm\varphi^2),\ (0,\pm\varphi^{-1},\pm\varphi,\pm\sqrt5),\\
  & (\pm\varphi^{-1},\pm1,\pm\varphi,\pm2),
\end{array}
\end{equation}
($24+64+64+64+96+96+192=600$) \cite{Coxeter1973,Wiki120}.  All coordinates lie in $\Zphi$, by
\eqref{eq:powers}; their $\Zphi$-span is the module $\MV$ of index $16$ in $\Zphi^4$.

\paragraph{Facets.}  The $120$ facet hyperplanes are $n\cdot x\le2\varphi^2$, where $n$ runs over
\begin{equation}\label{eq:normals}
  \text{all permutations of } (\pm2,0,0,0),\ (\pm1,\pm1,\pm1,\pm1),\qquad
  \text{\emph{odd} permutations of } (\pm\varphi,\pm1,\pm\varphi^{-1},0)
\end{equation}
($8+16+96=120$), the vertices of the dual 600-cell scaled by $2$.  All have $|n|=2$, so the
inradius of $B$ is $\varphi^2$.  The parity matters: with the \emph{even} permutations in
\eqref{eq:normals}, $96$ of the hyperplanes touch $B$ at single vertices instead of supporting
facets (we learned this the hard way).  Correctness is checked exactly: every vertex lies on
exactly $4$ facets, every facet contains exactly $20$ vertices, and $n\cdot v\le2\varphi^2$ for all
$600\times120$ pairs (block~I).  The vectors $n/2$ are the $120$ unit icosians of
Result~\ref{res:modules}.

\paragraph{Vertex layers.}  For the normal $n=(2,0,0,0)$ the heights $n\cdot v/|n|=v_1$ of the
vertices take the values $0,\pm\varphi^{-2},\pm\varphi^{-1},\pm1,\pm\varphi,\pm2,\pm\sqrt5,\pm\varphi^2$,
which divided by the inradius $\varphi^2$ give the layers \eqref{eq:layers}: in particular
$\varphi^{-2}/\varphi^2=\varphi^{-4}$ and $\sqrt5/\varphi^2=1-\varphi^{-4}$.

\paragraph{Cut hyperplanes.}  The star cut uses the $120$ hyperplanes
\begin{equation}\label{eq:starcut}
  n\cdot x=\varphi^{-4}\cdot2\varphi^2=2\varphi^{-2}=4-2\varphi ,
\end{equation}
the facet hyperplanes of the core $C=\varphi^{-4}B$; the hereditary cut uses
\begin{equation}\label{eq:heredcut}
  n\cdot x=(1-\varphi^{-4})\cdot2\varphi^2=2\sqrt5 .
\end{equation}
Each star-cut hyperplane contains the $20$ vertices of $B$ in its layer at height $\varphi^{-4}$
(for $n=(2,0,0,0)$ these are the vertices with $x_1=\varphi^{-2}$); they form a regular
dodecahedron $\varphi^2$ times larger than a facet of $B$, the dodecahedron of
Result~\ref{res:depth}.  Each hereditary-cut hyperplane contains the $20$ vertices at height
$1-\varphi^{-4}$ ($x_1=\sqrt5$), which form a dodecahedron congruent to a facet.  For a general
depth $t$ the cut hyperplanes are $n\cdot x=2\varphi^2t$, and for $t\in\Qphi$ all coefficients can
be cleared to $\Zphi$ by multiplying with the conjugate of the denominator
(Section~\ref{sec:ring}).

\paragraph{The two cores in the module.}  The star core $C=\varphi^{-4}B$ has its vertices in
$\varphi^{-4}\MV=\MV$: it is a copy of $B$ on the same module, and so is every further
$\varphi^{-4k}B$.  The hereditary core $(3\varphi-4)B=(\sqrt5\varphi^{-2})B$ has its vertices in
$\sqrt5\,\MV$, a sublattice of index $5^4=625$ (Result~\ref{res:modules}); iterating the
hereditary construction on its own core produces vertices in $5^{-k}\MV$ and leaves the module.
This is the difference the last column of Table~\ref{tab:bifurcation} records, and it is decided
by one norm.

\paragraph{Restriction to a facet.}  A facet $F$ of $B$ is a regular dodecahedron of edge
$2\varphi^{-2}$, midradius $1$, inradius $\varphi^2/\sqrt{\varphi+2}\approx0.851$ and circumradius
$\sqrt{6-3\varphi}\approx1.071$, lying at distance $\varphi^2$ from the centre.  Of the $120$
hyperplanes, $42$ meet the interior of a given facet in the star cut: $12$ of them are parallel to the faces of $F$
at $5-2\sqrt5=\sqrt5\varphi^{-3}\approx0.528$ of its inradius and $30$ are orthogonal to its two-fold axes at
$\varphi^{-2}$ of its midradius; together they cut $F$ into $3\,655$ pieces.  In the hereditary cut
exactly $12$ hyperplanes meet $F$, in the twelve planes of $F$'s own stellation dissection, and $F$ is cut
into $165$ pieces with the signature of Result~\ref{res:seven} ($1\times V_{20}$, $90\times V_4$,
$20\times V_5$, $54\times V_6$).  Both statements were verified by restricting the hyperplanes to
the facet's $3$-space and running the three-dimensional code independently.

\section{The algorithm}\label{sec:algorithm}

The cells of $B$ cut by hyperplanes $h_1,\dots,h_m$ are the cells of a hyperplane arrangement
restricted to a convex polytope.  Arrangements are classically built incrementally
\cite{Edelsbrunner1986} and counted by Zaslavsky's theory \cite{Zaslavsky1975}: $120$ hyperplanes
in general position in $\R^4$ bound at most $8\,503\,671$ cells, $12$ planes in $\R^3$ at most
$299$.  A binary tree of half-space splits is a binary space partition
\cite{Thibault1987}; representing a convex cell by both its inequalities and its vertices is the
double description method \cite{Motzkin1953,Fukuda1996}, and cutting convex cells by planes is
also the core operation of Voronoi codes such as Voro++ \cite{Rycroft2009}.  The difficulty is
robustness: rounding errors in orientation tests produce inconsistent topology \cite{Yap1997}, and
the modern remedies are exact or filtered predicates \cite{Shewchuk1997,Attene2020,Cherchi2020,Levy2024}
and \emph{plane-based} representations in which every point is remembered as the intersection of
input planes \cite{Campen2010,NehringWirxel2021,Trettner2022}.  Our setting allows a fully exact
method with no filter at all, because every coefficient is in $\Zphi$ and Section~\ref{sec:sign}
decides every sign.

\subsection{Membership without cutting}
Before the arrangement is built, the ring already answers the question a renderer asks: which
piece contains the point $x$?  By Section~\ref{sec:signvectors} the answer is the sign vector.
\begin{lstlisting}[caption={Naming the piece that contains $x$: $m$ exact signs, no cutting, one thread per point.},label=lst:member]
procedure PIECE(x):                 # x in Q(sqrt5)^d as pairs of rationals
    if some facet inequality of B fails at x: return OUTSIDE
    sigma <- ()
    for k in 1..m: sigma.append( sign(n[k].x - c[k]) )   # the exact sign of Section 2.5
    return sigma                    # the piece; its class is the canonical form of sigma
\end{lstlisting}
The slicer below labels every cell by this vector; the point of writing the procedure down is that
the label can be computed \emph{before} the cell, in parallel, with no state.  What the cut adds
is the list of realised labels, the vertices (Cramer quotients $X/W$), the face lattices and the
volumes.

\subsection{Exact signs in \texorpdfstring{$\Zphi$}{Z[phi]}}
A number $m+n\varphi$ is stored as the pair $(m,n)$ of $128$-bit integers; sums and products stay
in the ring, $(m+n\varphi)(m'+n'\varphi)=(mm'+nn')+(mn'+nm'+nn')\varphi$, and its sign is
\eqref{eq:sign}.  The Galois conjugate $\overline{m+n\varphi}=(m+n)-n\varphi$ and the norm
$\Nm(m+n\varphi)=m^2+mn-n^2$ clear denominators.  Nothing else from the ring is needed at run
time; the modules of Section~\ref{sec:modules} are used afterwards, to interpret the output.

\subsection{Plane-based double description}\label{sec:dd}
Hyperplanes $n\cdot x=c$ with $n\in\Zphi^d$, $c\in\Zphi$ are stored once in a global list.  A
vertex is the intersection of $d$ of them and is stored in homogeneous coordinates
$(X,W)\in\Zphi^{d+1}$ computed by Cramer's rule \emph{from the original hyperplanes}; nothing
accumulates from one cut to the next, and the algebraic size of a vertex is bounded once and for
all.  Each vertex carries its \emph{incidence set} $Z(v)$, the set of hyperplanes through it, as a
bitset.  A cell is a list of oriented half-spaces together with its vertices.

To split a cell $P$ by a hyperplane $h$, evaluate the exact sign of $h$ at every vertex.  If no
vertex is strictly on one side, $P$ is unchanged.  Otherwise vertices with sign $0$ go to both
halves with $h$ added to their incidence set, and every \emph{edge} $uv$ with
$\sgn h(u)<0<\sgn h(v)$ contributes the new vertex $h\cap p_1\cap\dots\cap p_{d-1}$, where
$p_1,\dots,p_{d-1}$ are independent hyperplanes in $Z(u)\cap Z(v)$.  Edges are found without any
geometry:

\begin{lemma}[combinatorial adjacency \cite{Fukuda1996}]\label{lem:adj}
Two vertices $u\neq v$ of a $d$-polytope given by an inequality system are joined by an edge if
and only if $|Z(u)\cap Z(v)|\ge d-1$ and no third vertex $w$ satisfies $Z(u)\cap Z(v)\subseteq Z(w)$.
\end{lemma}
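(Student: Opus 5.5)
Write $P=\{x:a_i\cdot x\le b_i,\ i\in I\}$ for the $d$-polytope, $Z(x)=\{i: a_i\cdot x=b_i\}$ for the tight set of a point, and $F(S)=\{x\in P: a_i\cdot x=b_i\ \forall i\in S\}$ for the face cut out by a set $S$ of inequalities. The plan is to reduce both directions to the standard face-lattice description of $P$. Every face of $P$ has the form $F(S)$, and a face is an edge exactly when its affine hull has dimension $1$. Two facts follow from irredundancy-free linear algebra and will be used throughout. First, $v$ is a vertex iff the rows $\{a_i: i\in Z(v)\}$ have rank $d$. Second, the smallest face containing a set of points $X$ is $F(\bigcap_{x\in X}Z(x))$.

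\emph{Only if.} Suppose $uv$ is an edge $E$. Its affine hull is a line, so the tight rows $\{a_i:i\in Z(E)\}$ have rank $d-1$, where $Z(E)=Z(u)\cap Z(v)$ is the set tight on all of $E$. In particular $|Z(u)\cap Z(v)|\ge d-1$.

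Now suppose some vertex $w\ne u,v$ had $Z(u)\cap Z(v)\subseteq Z(w)$. Then $w\in F(Z(u)\cap Z(v))$. This face is the smallest face containing $u$ and $v$, namely $E$ itself. But the only vertices of the segment $E$ are $u$ and $v$, a contradiction.

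\emph{If.} Put $S=Z(u)\cap Z(v)$ and $G=F(S)$, a face containing $u$ and $v$. I plan to show that $G$ is the segment $[u,v]$, using the hypothesis that no third vertex $w$ has $S\subseteq Z(w)$.

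The vertices of the polytope $G$ are exactly the vertices $w$ of $P$ with $S\subseteq Z(w)$. By hypothesis these are only $u$ and $v$. A polytope is the convex hull of its vertices, so $G=\conv\{u,v\}$. This is one-dimensional, hence an edge of $P$ joining $u$ and $v$.

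This direction does not actually need the cardinality condition $|S|\ge d-1$. It serves only as a cheap filter, and is necessary by the first half. I will remark that in degenerate arrangements (up to $20$ hyperplanes through a vertex), $|S|\ge d-1$ alone is not sufficient: two vertices can share $d-1$ tight hyperplanes of low rank without spanning an edge. The test must therefore be the containment test, not a counting one.

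\emph{Main obstacle.} The one point needing care is the claim that the vertices of the face $F(S)$ are precisely the vertices of $P$ lying in it. I would justify it in one line: a vertex of a face $G$ is a face of $G$, and a face of a face is a face of $P$; conversely, a vertex of $P$ lying in $G$ is an extreme point of $G$.

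With that in hand, the lemma is exactly Fukuda's combinatorial adjacency test \cite{Fukuda1996}. It is applied here with $Z$ computed by exact signs in $\Zphi$, so that the incidence sets are exact.
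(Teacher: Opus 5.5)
Your proof is correct. The paper does not prove the lemma: it cites Fukuda and only remarks that in dimension three the count condition already suffices (two distinct planes through $u$ and $v$ meet in the line $uv$), while in dimension four it does not.

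Your argument through the minimal face $G=F(S)$, $S=Z(u)\cap Z(v)$, is the standard proof behind that citation. It also gives slightly more, and this explains the paper's remark: for vertices $u\neq v$, the segment $[u,v]$ is an edge if and only if the rows $a_i$, $i\in S$, have rank exactly $d-1$. If the rank is $d-1$, then $G$ is $P$ intersected with the line $uv$, a segment whose endpoints must be the extreme points $u$ and $v$. If the rank is smaller, $G$ has dimension at least $2$ and so contains a third vertex. The containment test is therefore a purely combinatorial substitute for this rank condition. It has content exactly when $|S|\ge d-1$ but the rank is lower, as for three hyperplanes through a common $2$-plane in dimension four. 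Your observation that the cardinality condition is logically redundant in the ``if'' direction is right, and it matches its use as a cheap pre-filter in the slicer of Section~\ref{sec:algorithm}.

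Two small points of exposition. First, the rank claim in your ``only if'' step (and the dimension claim above) rests on the fact that the affine hull of a face is cut out by its full equality set. That fact is not among the two you listed. It follows from the midpoint of $u$ and $v$: every inequality outside $S$ is strict there, so a neighbourhood of the midpoint in $\{x: a_i\cdot x=b_i,\ i\in S\}$ lies in $G$. Second, nothing you use requires the system to be irredundant. This matters here because the slicer's cells keep half-spaces that have become redundant; for example, every facet inequality of $B$ is redundant for the core cell.
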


In dimension $3$ the first condition suffices, because two distinct supporting planes meet in a
line; in dimension $4$ it does not, since three hyperplanes through a $2$-plane are common, and the
second condition is essential.  Degenerate incidences, which are everywhere in these arrangements
(Results~\ref{res:phase} and \ref{res:vert4}), need no special treatment: a vertex on five planes
simply has five planes in its incidence set.

\subsection{Face lattice, checks, volume}
The face lattice of a cell is built top-down from the incidence sets: a subset of a $k$-face $F$
is a $(k-1)$-face if it is $F\cap\{v:p\in Z(v)\}$ for some hyperplane $p$ and has affine dimension
$k-1$.  (The rank test is the one place where we use floating point, on exactly computed
vertices; it never came close to its tolerance.)  Every cell is checked against the Euler relation
$\sum_{k<d}(-1)^kf_k=1-(-1)^d$ and for positive volume, computed recursively as
$\vol_k(F)=\tfrac1k\sum_G\operatorname{dist}(c_F,\operatorname{aff}G)\,\vol_{k-1}(G)$ over the
facets $G$ of $F$, and the volumes of all cells are summed and compared with $\vol B$.

\subsection{Streaming the binary space partition}
Cutting breadth-first, keeping all cells after each hyperplane, exhausts memory in dimension four
(more than $3$ million exact vertex records after $70$ of the $120$ cuts).  We traverse the tree
depth-first instead: a cell is split by cut $k$, its two children continue with cut $k+1$, and a
leaf (a cell that has met all $m$ hyperplanes) is analysed and discarded at once.  Memory is
proportional to the depth $m$.  A cell is identified by its sign vector
$\sigma\in\{\pm1\}^m$.  Parallelism comes from cutting breadth-first to a
frontier ($26$ cuts, $7\,346$ cells for the star cut) and distributing the frontier over threads.

\begin{lstlisting}[caption={The exact cut. All arithmetic in $\Zphi$; \texttt{sign} is \eqref{eq:sign}.},label=lst:algo]
procedure CUT(B, h[1..m]):
    P <- cell(B)               # half-spaces of B, its vertices with incidence sets
    DFS(P, 1)
procedure DFS(P, k):
    if k > m: emit(P); return  # face lattice, Euler check, volume, invariant
    s[v] <- sign(h[k](v)) for each vertex v of P
    if all s >= 0 or all s <= 0: DFS(P, k+1); return   # h[k] misses the interior
    N <- {v : s[v] <= 0}, Q <- {v : s[v] >= 0}         # zeros go to both sides
    for each pair (u, v) with s[u] < 0 < s[v]:
        Z <- Z(u) & Z(v)
        if |Z| < d-1 or (Z <= Z(w) for some third vertex w): continue   # not an edge
        w <- intersection of h[k] with d-1 independent planes of Z (Cramer)
        Z(w) <- Z + {h[k]}; add w to N and Q
    DFS((P + {h[k] <= 0}, N), k+1); DFS((P + {h[k] >= 0}, Q), k+1)
\end{lstlisting}

\subsection{Orbits}\label{sec:orbits}
The symmetry group of $B$ acts on the cells, and we want the orbits.  Two cells in the same
orbit are congruent and lie at the same distance from the centre; we group cells by the invariant
($f$-vector, multiset of facet sizes, volume to nine significant digits, a hash of the sorted
pairwise vertex distances, a hash of the sorted vertex radii), computed in double precision from
the exact vertices.  A necessary condition for a class to be one orbit is that its size divides the
order of the group, $120$ for $I_h$ and $14\,400$ for $\Hfour$; a coarser invariant (volume only)
failed this test for $28$ classes of the star cut, the invariant above passes it for all.  In
dimension three Result~\ref{res:orbits} confirms the classification exactly: canonical sign vectors
under the $120$ permutations give the same seven classes.  The same procedure in dimension four
needs the $14\,400$ permutations of the $120$ hyperplanes induced by $\Hfour$, which the exact
reflections of Result~\ref{res:modules} generate, and the sign vectors of the $\NSTAR$ cells,
which the slicer already stores; it would turn the counts $\NSTARCLASSES$ and $\NHEREDCLASSES$
into exact statements and is left for future work.

\subsection{Exploding a dissection}\label{sec:explode}
The pictures in this article move the pieces apart.  The following choice never makes two pieces
collide, in any dimension and also in sections.

\begin{proposition}\label{prop:explode}
Let the cells of an arrangement of hyperplanes $\hat n_k\cdot x=c_k$, $|\hat n_k|=1$, be translated
by $A\sum_k\sigma_k\hat n_k$, where $\sigma_k=\pm1$ is the side of the cell with respect to
hyperplane $k$ and $A>0$.  Any two cells $\sigma\neq\tau$ are then separated by a slab of width at
least $2A|m|$, where $m=\sum_{k\in S}\sigma_k\hat n_k$ and $S$ is the set of hyperplanes separating
them.  The same holds, with $m$ replaced by its orthogonal projection, for the sections of the
exploded cells by an affine subspace.
\end{proposition}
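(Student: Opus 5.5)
The proof reduces to one linear inequality. The two translation vectors differ exactly by $2Am$, and $m$ is a direction along which the original closed cells are already weakly separated. After the explosion the gap grows by $2A|m|^2$ in the functional $m\cdot x$. The section case follows by restricting the same functional to the direction space of the subspace.

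\emph{Step 1: the difference of the two translations.} Write $a_\sigma=\sum_k\sigma_k\hat n_k$ and $a_\tau=\sum_k\tau_k\hat n_k$. For $k\notin S$ we have $\sigma_k=\tau_k$, so these terms cancel. For $k\in S$ we have $\tau_k=-\sigma_k$, so
\[
  a_\sigma-a_\tau=\sum_{k\in S}(\sigma_k-\tau_k)\hat n_k=2\sum_{k\in S}\sigma_k\hat n_k=2m .
\]

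\emph{Step 2: weak separation before the explosion.} Let $P_\sigma,P_\tau$ be the closed cells, with $x\in P_\sigma$ and $y\in P_\tau$. For each $k\in S$ we have $\sigma_k(\hat n_k\cdot x-c_k)\ge0$. Since $\tau_k=-\sigma_k$, we also have $\sigma_k(\hat n_k\cdot y-c_k)\le0$. Subtracting gives $\sigma_k\hat n_k\cdot(x-y)\ge0$, and summing over $S$ gives $m\cdot(x-y)\ge0$.

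\emph{Step 3: the gap after the explosion.} Every point of the exploded cells has the form $x'=x+Aa_\sigma$ and $y'=y+Aa_\tau$ with $x,y$ as in Step 2. By Steps 1 and 2,
\[
  m\cdot(x'-y')=m\cdot(x-y)+2A|m|^2\ge2A|m|^2 .
\]
Suppose $m\neq0$ and divide by $|m|$. The unit functional $\hat m=m/|m|$ then takes values on the first exploded cell that exceed its values on the second by at least $2A|m|$. So the hyperplanes $\hat m\cdot z=\min_{P_\sigma+Aa_\sigma}\hat m\cdot z$ and $\hat m\cdot z=\max_{P_\tau+Aa_\tau}\hat m\cdot z$ bound a slab of width $\ge2A|m|$ that separates the two cells.

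If $m=0$ the claimed width is $0$ and the statement holds trivially, but it gives no disjointness. For this reason the proof will note that the intended application (the pictures) needs $m\neq0$, which holds for instance when $|S|=1$ or when the separating normals lie in an open half-space. This degenerate case is the one point that needs care; the rest is the computation above.

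\emph{Step 4: sections.} Let $L$ be an affine subspace with direction space $V$, and let $m_V$ be the orthogonal projection of $m$ onto $V$. Take $x',y'$ in the sections $(P_\sigma+Aa_\sigma)\cap L$ and $(P_\tau+Aa_\tau)\cap L$. Then $x'-y'\in V$, so $m_V\cdot(x'-y')=m\cdot(x'-y')\ge2A|m|^2$ by Step 3. Dividing by $|m_V|$ (assumed nonzero) gives a gap of at least $2A|m|^2/|m_V|$ inside $L$ in the direction $m_V/|m_V|$. Since $|m_V|\le|m|$, this is at least $2A|m_V|$, which is the stated bound with $m$ replaced by its projection.
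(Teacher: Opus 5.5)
Your proof is correct and is essentially the paper's argument. The paper uses the affine function $f(x)=\sum_{k\in S}\sigma_k(\hat n_k\cdot x-c_k)$, whose gradient is your $m$, observes that the relative translation $2Am$ raises it by $2A|m|^2$, and for sections restricts $f$ to the subspace, where its gradient is the projection of $m$.

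Your caveat about $m=0$ is unnecessary. Every term of $f$ is strictly positive on the open cell $\sigma$ and strictly negative on the open cell $\tau$, so $f$ is not constant and hence $m\neq0$ whenever $\sigma\neq\tau$. Likewise $m_V\neq0$ whenever both sections are nonempty, by your own inequality $m_V\cdot(x'-y')\ge2A|m|^2>0$.
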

\begin{proof}
Put $f(x)=\sum_{k\in S}\sigma_k(\hat n_k\cdot x-c_k)$.  Each term is $\ge0$ on $\sigma$ and
$\le0$ on $\tau$, so $f\ge0$ on $\sigma$ and $f\le0$ on $\tau$.  The translation of $\sigma$
relative to $\tau$ is $2Am$, which raises $f$ by $2A|m|^2$ on $\sigma$; the exploded cells lie on
opposite sides of the parallel hyperplanes $f=0$ and $f=2A|m|^2$, at distance $2A|m|$.  Restricted
to an affine subspace, $f$ is affine with gradient the projection of $m$, and the argument repeats.
\end{proof}

\section{The 120-cell cut}\label{sec:results}

\subsection{The star cut}
\begin{result}[star cut]\label{res:star}
The 120-cell cut by the $120$ hyperplanes \eqref{eq:starcut} has $\NSTAR$ cells.  Every cell
satisfies $f_0-f_1+f_2-f_3=0$ and has positive volume; the volumes add up to $268.3281572997$,
against the exact $\vol B=\tfrac{15}{4}(105+47\sqrt5)\,a^4=268.3281572999\ldots$ for the edge
$a=2\varphi^{-2}$ (relative difference $10^{-12}$).  The cells fall into $\NSTARCLASSES$ classes
whose sizes all divide $14\,400$, with the histogram (orbit size : number of classes)
\ORBITHIST.  There are $\NSTARSHAPES$ shapes up to congruence (two pairs of classes are congruent
cells at different radii) and $\NSIMPLEXCLASSES$ classes of $4$-simplices, $\NSIMPLEXCELLS$ cells
in all.  The computation took $246$\,s on four threads and $290$\,MB of memory.
\end{result}

The complete catalogue is Appendix~\ref{app:catalogue}; Table~\ref{tab:star_top} lists the
twelve largest classes and Figure~\ref{fig:gallery} shows representatives of the largest
$24$.  The two largest pieces are the core $C=\varphi^{-4}B$ (one cell, volume
$\varphi^{-16}\vol B=0.1216$) and the $120$ \emph{caps}, one under each facet, with
$f=(53,190,180,43)$: $30$ facets with $6$ vertices, $12$ with $7$, and one $32$-vertex facet on
the boundary of $B$, which is the central piece of the facet pattern.  Beyond these the volumes
drop quickly: the third class has volume $0.0048$, and the smallest piece, a $4$-simplex of volume
$9\cdot10^{-8}$, is six orders of magnitude smaller than the cap.  Vertex counts of the classes
range from $5$ ($66$ classes) through $6$ ($97$), $7$ ($40$), $8$ ($25$), $9$ ($39$) and up to
$64$, with the core at $600$.

\begin{table}[t]
  \centering\small
  \begin{tabular}{rrllr}
\toprule
class & cells & $f$-vector & facets (vertex counts, with multiplicity) & $4$-volume\\
\midrule
0 & 120 & $(53,190,180,43)$ & $6^{30} 7^{12} 32$ & 0.141153 \\
1 & 1 & $(600,1200,720,120)$ & $20^{120}$ & 0.121581 \\
2 & 3\,600 & $(19,46,41,14)$ & $4^{4} 7^{4} 8^{4} 10^{2}$ & 0.004837 \\
3 & 1\,440 & $(8,22,25,11)$ & $4^{10} 7$ & 0.004187 \\
4 & 2\,400 & $(26,72,63,17)$ & $6^{3} 7^{3} 8^{5} 9^{6}$ & 0.003717 \\
5 & 3\,600 & $(17,43,42,16)$ & $4^{4} 5^{2} 6^{4} 8^{6}$ & 0.003324 \\
6 & 2\,400 & $(6,15,18,9)$ & $4^{9}$ & 0.002512 \\
7 & 2\,400 & $(15,37,33,11)$ & $6^{6} 7^{3} 8^{2}$ & 0.002426 \\
8 & 1\,440 & $(19,53,50,16)$ & $5^{5} 7^{11}$ & 0.002283 \\
9 & 1\,440 & $(19,54,55,20)$ & $4^{5} 5^{5} 7^{10}$ & 0.002138 \\
10 & 3\,600 & $(16,36,32,12)$ & $6^{8} 8^{4}$ & 0.001911 \\
11 & 600 & $(16,48,48,16)$ & $5^{12} 8^{4}$ & 0.001812 \\
\bottomrule
\end{tabular}

  \caption{The twelve largest of the $\NSTARCLASSES$ orbits of the star cut of the 120-cell
  (facet entries $k^m$: $m$ facets with $k$ vertices).}\label{tab:star_top}
\end{table}

\begin{figure}[t]
  \centering
  \begin{tikzpicture}
    \node[anchor=south west,inner sep=0] (img) at (0,0) {\fig[\linewidth]{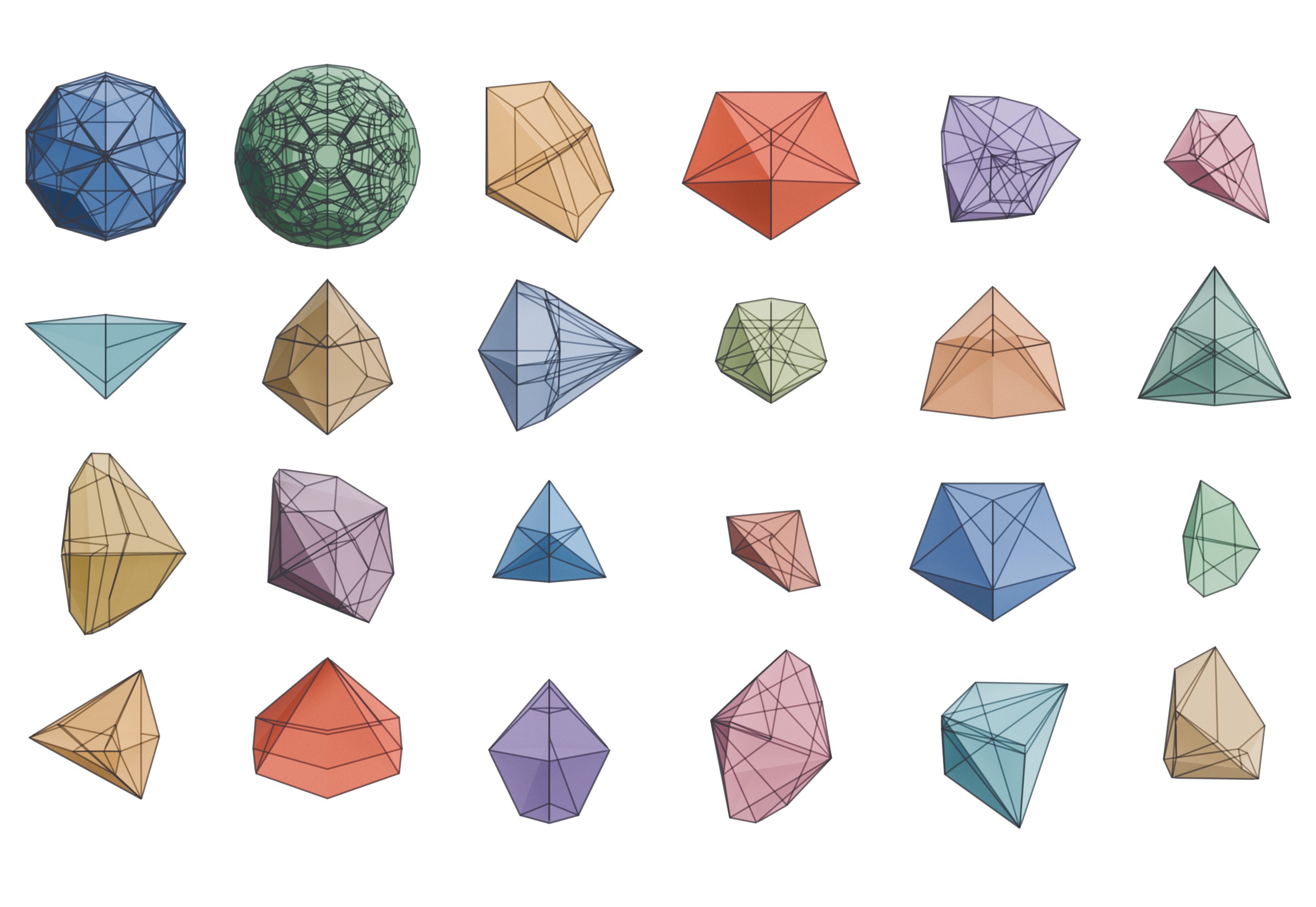}};
    \begin{scope}[x={(img.south east)},y={(img.north west)}]
      \IfFileExists{figures/cell120_gallery_labels.tex}{\input{figures/cell120_gallery_labels.tex}}{}
    \end{scope}
  \end{tikzpicture}
  \caption{The $24$ largest classes of the star cut, one four-dimensional cell each, drawn in
  perspective from a point outside the cell along its centroid direction and scaled to a common
  size (class number and orbit size under each piece).}
  \label{fig:gallery}
\end{figure}

\paragraph{The facet pattern.}  The $42$ hyperplanes that meet a facet $F$ cut it into $3\,655$
three-dimensional pieces (Figure~\ref{fig:facets}, left), each the facet of exactly one cell of
the cut; the $32$-vertex central piece belongs to the cap.  This is the pattern one would see
on the surface of a physical model.

\paragraph{Peeling the facet pattern.}  The pattern is layered (Figure~\ref{fig:peel}).  Measured
by the distance of a piece's centroid from the centre of the facet, the $3\,655$ pieces occupy a
central piece, a shell of $1\,882$ pieces at $0.41$--$0.71$ of the outermost centroid distance,
and an outer shell of $1\,772$ pieces beyond $0.75$ (there are no centroids between $0.71$ and
$0.75$).  Cutting the pattern in half along a mirror plane of the facet, or removing the outer
pieces, shows how they fit: the central piece, the facet of the cap, has $32$ vertices and $30$
quadrilateral faces and is a rhombic triacontahedron (the dual of the icosidodecahedron, with the
icosahedral symmetry of the facet); on each of its faces sits a square pyramid (class $31$,
$30$ pieces), then come $60$ six-vertex pieces of class $61$, and the remaining layers nest around
them.  The same peeling applied to the central section of the
120-cell (Figure~\ref{fig:peel_section}) shows the same organisation one dimension up: the
central section of the inner 120-cell $C$ is alone within $0.15$ of the outermost centroid
distance, a closed first shell of $622$ pieces follows at $0.15$--$0.18$, and the layers
beyond are dense ($1\,595$ pieces within $0.20$, $4\,809$ within $0.25$, $13\,411$ within
$0.40$).

\begin{figure}[p]
  \centering
  \fig[0.32\linewidth]{figures/cell120_facet.jpg}\hfill
  \fig[0.32\linewidth]{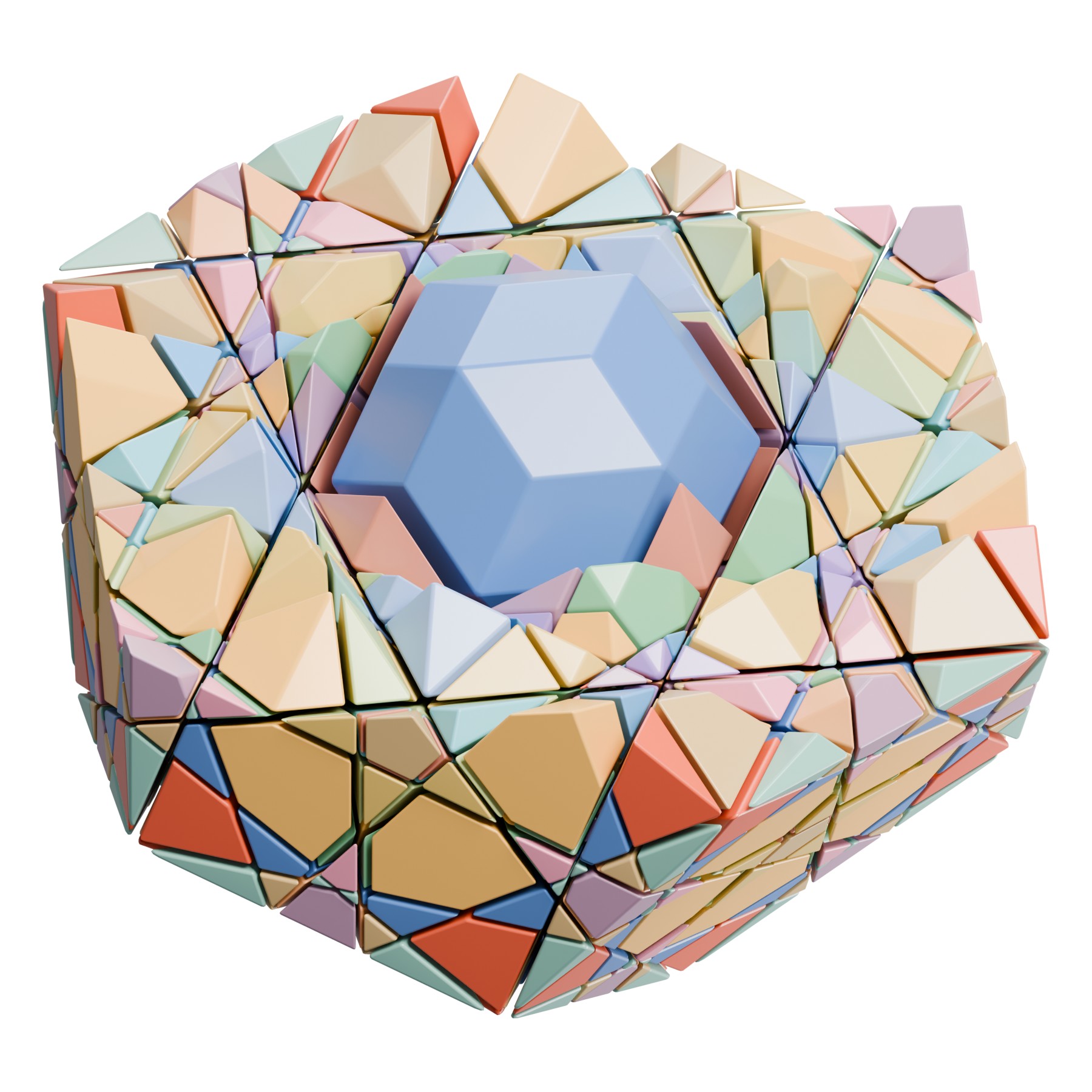}\hfill
  \fig[0.32\linewidth]{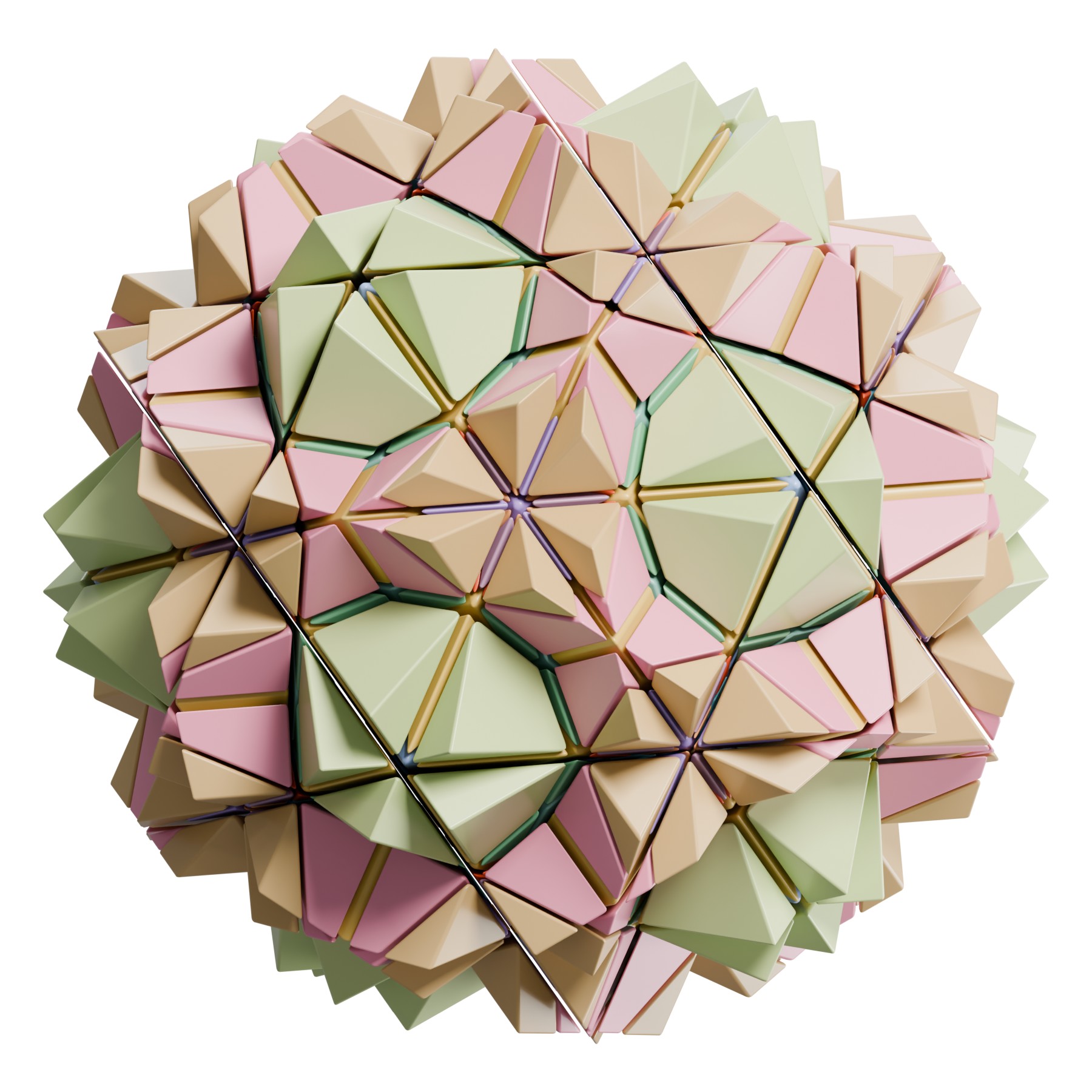}\\[3pt]
  \fig[0.32\linewidth]{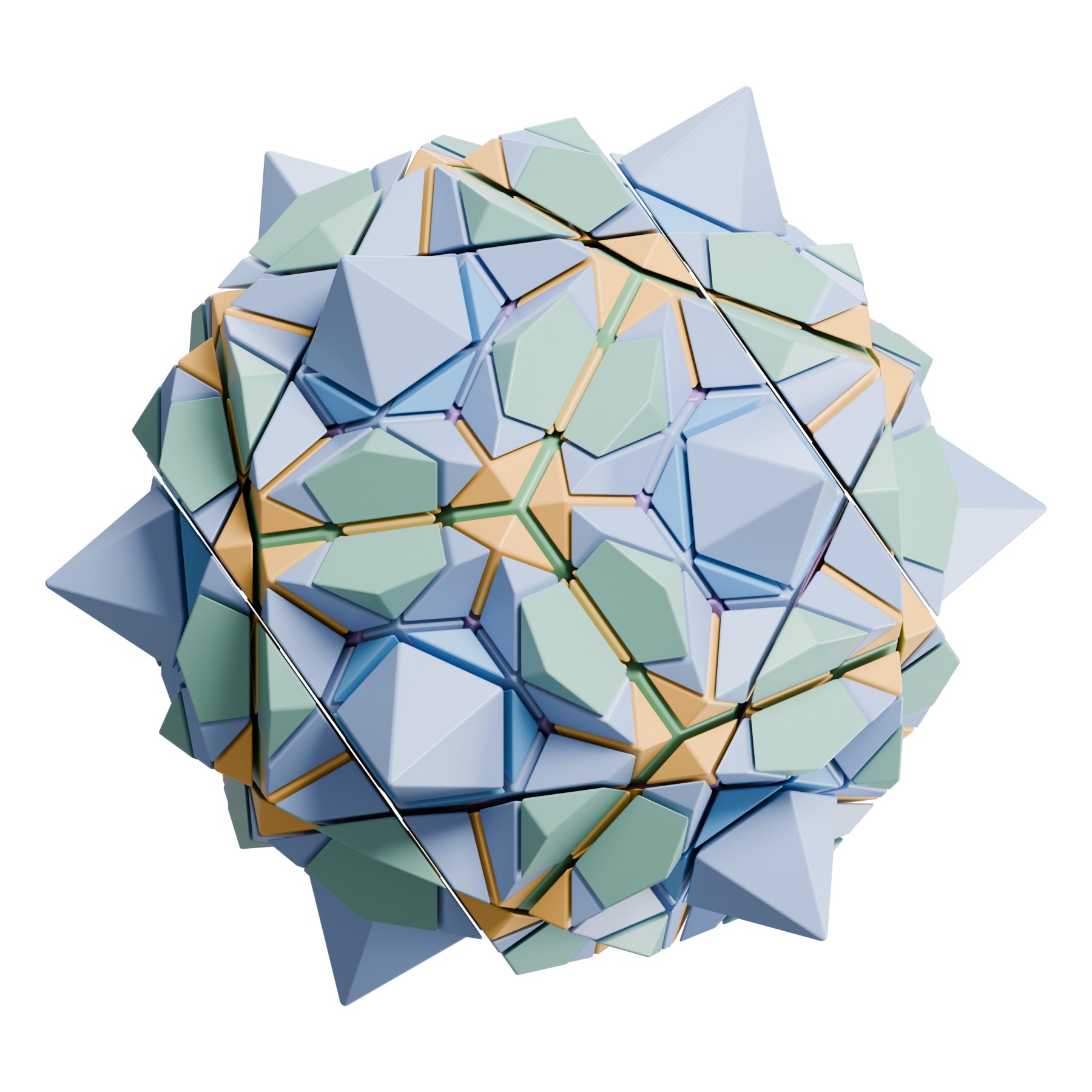}\hfill
  \fig[0.32\linewidth]{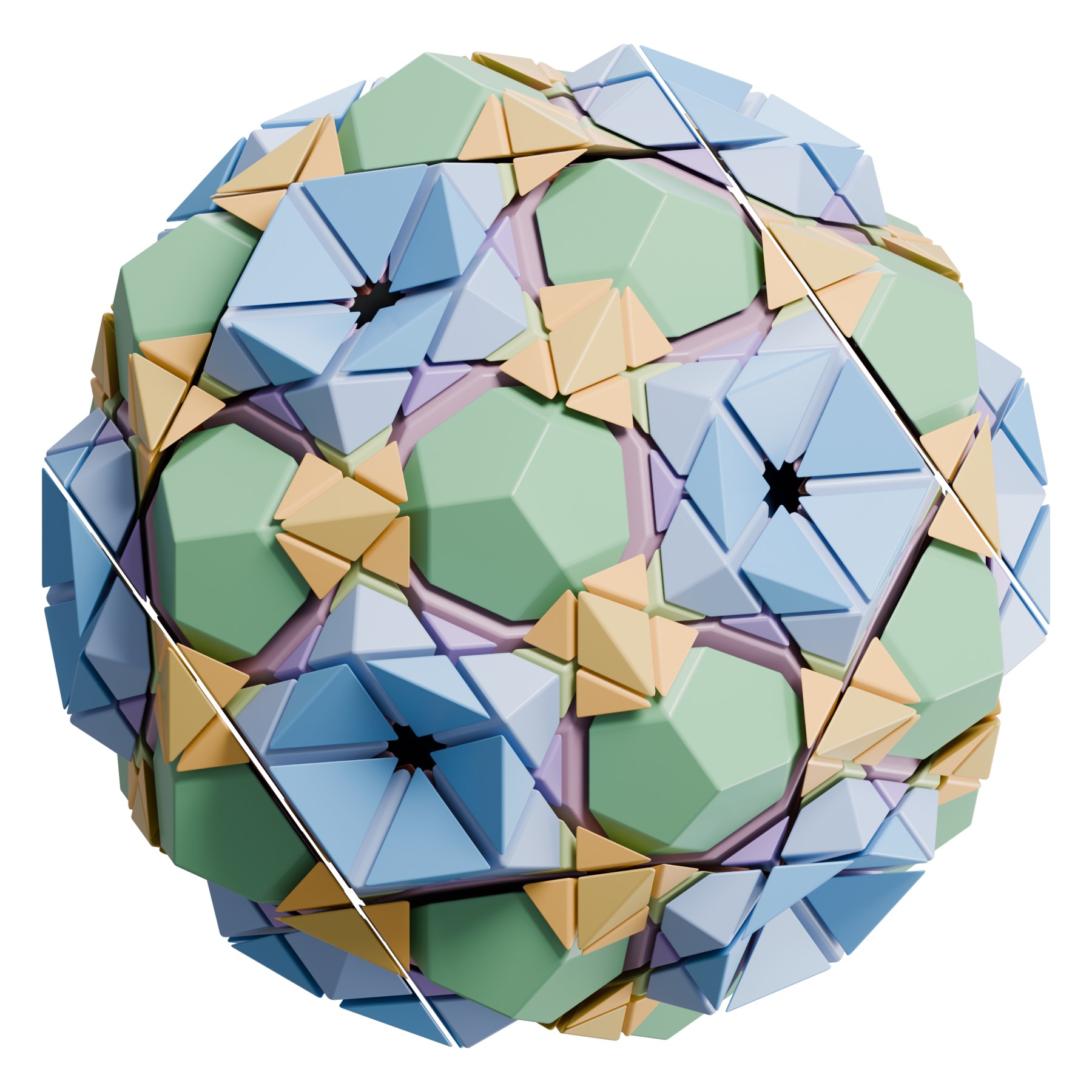}\hfill
  \fig[0.32\linewidth]{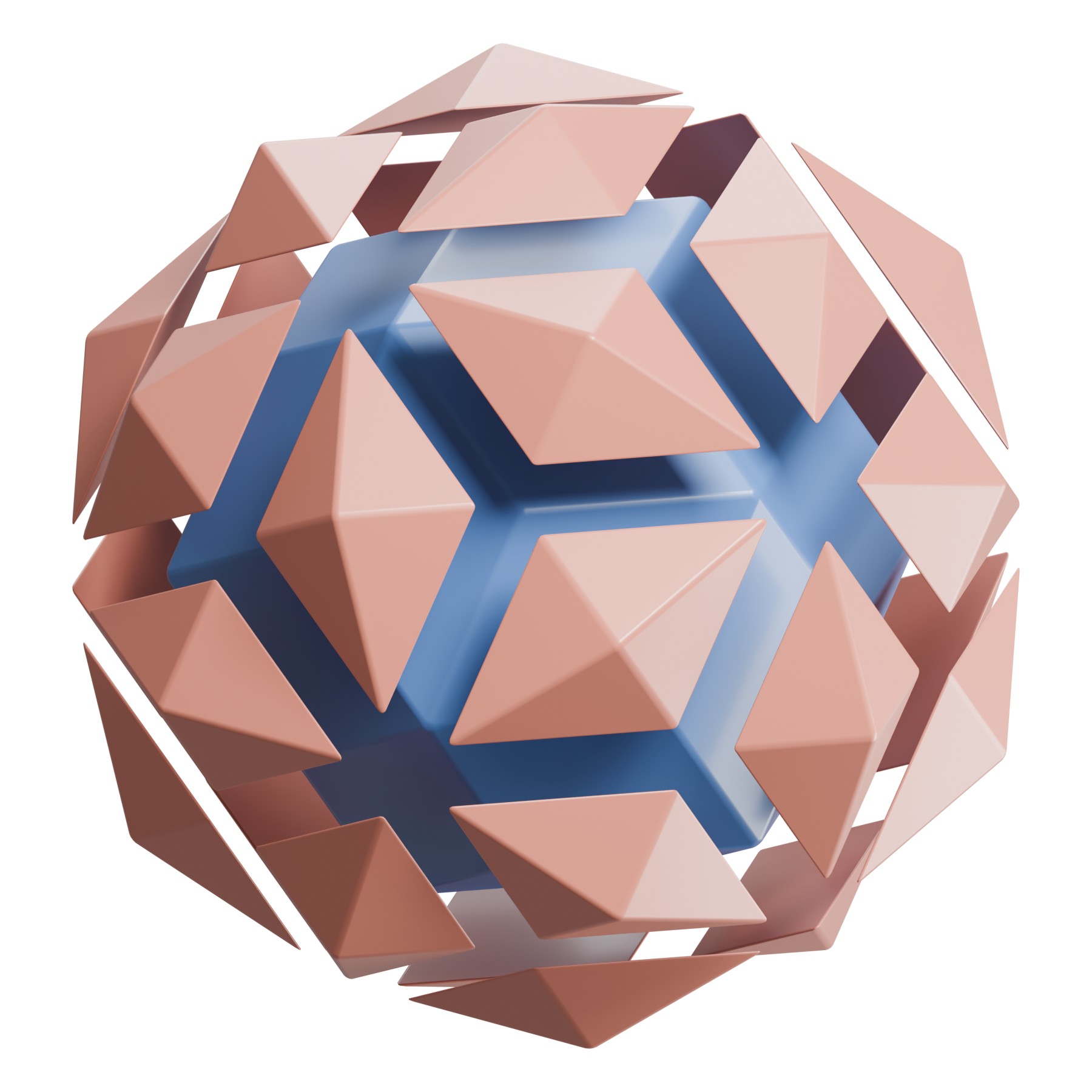}
  \caption{Peeling the $3\,655$-piece pattern of a facet of the 120-cell (star cut).  Top: the
  whole pattern; the pattern cut in half along a mirror plane of the facet (the pieces whose
  centroids lie on one side are removed, so the central rhombic triacontahedron stays whole);
  the pieces whose centroids lie within $0.73$ of the outermost centroid distance.  Bottom: within $0.575$, within
  $0.52$, and within $0.43$, which leaves the rhombic triacontahedron at the centre and the $30$
  square pyramids on its faces (explosion increased in the last two panels).  Colours are the orbit classes of
  the four-dimensional cells the pieces belong to.}
  \label{fig:peel}
\end{figure}

\begin{figure}[t]
  \centering
  \fig[0.32\linewidth]{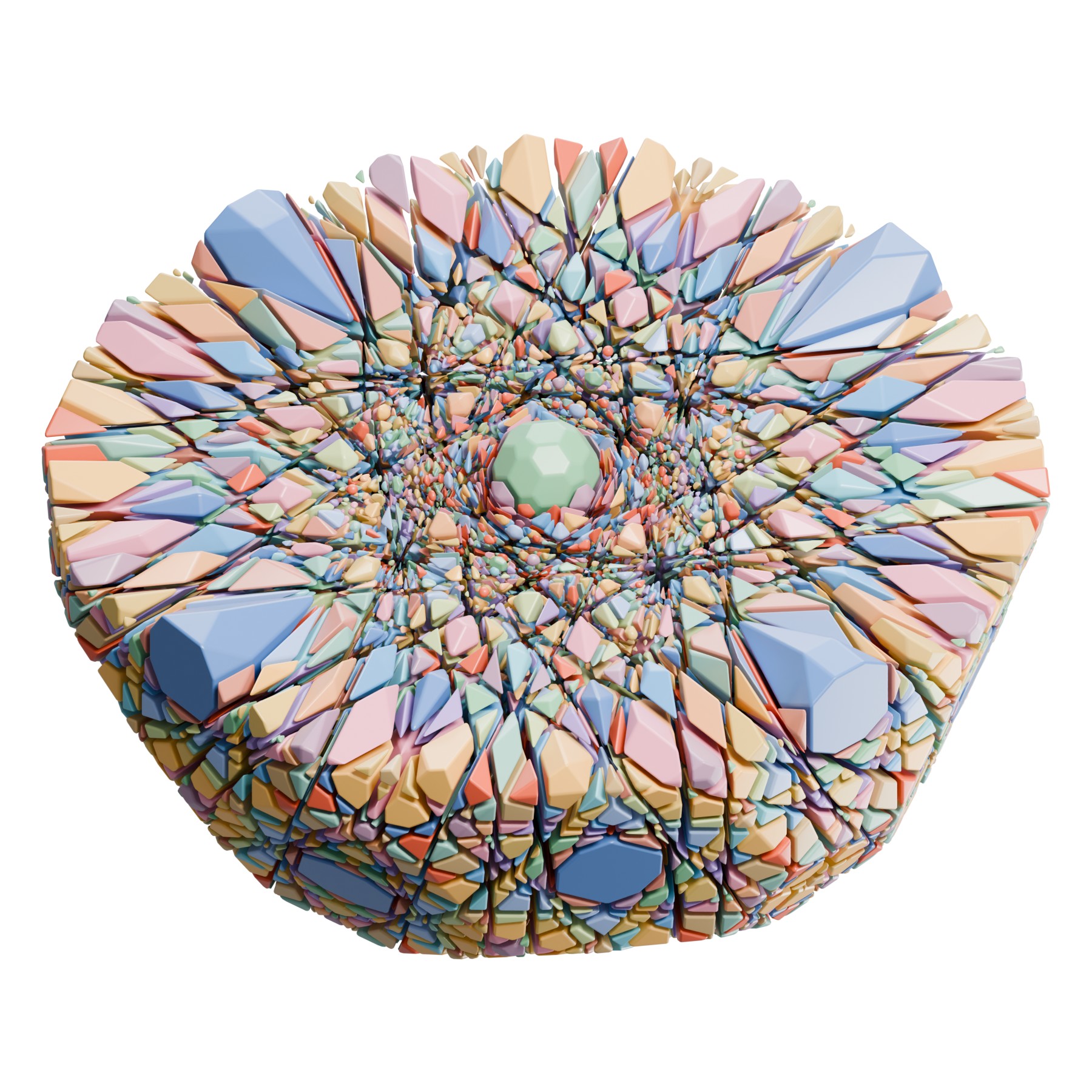}\hfill
  \fig[0.32\linewidth]{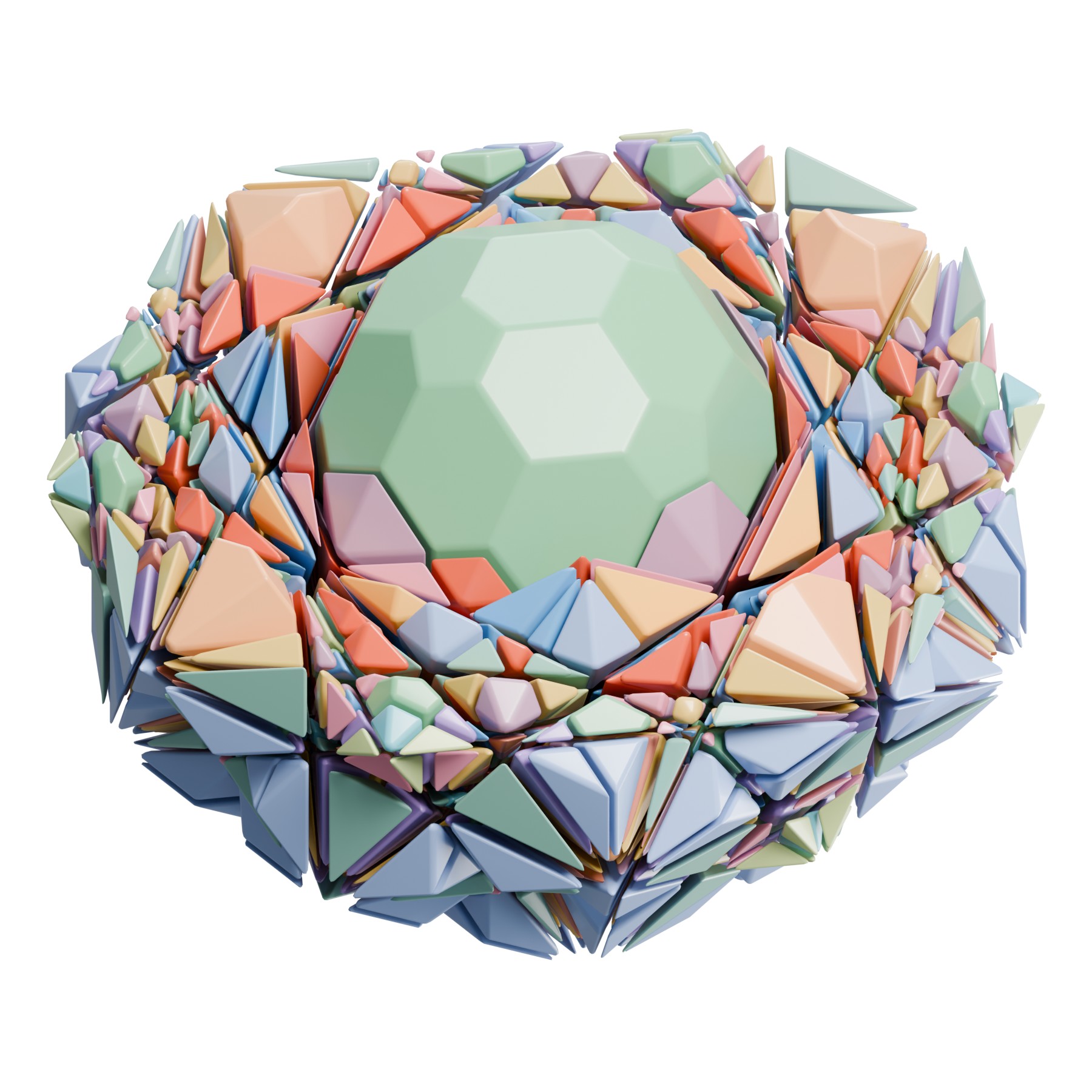}\hfill
  \fig[0.32\linewidth]{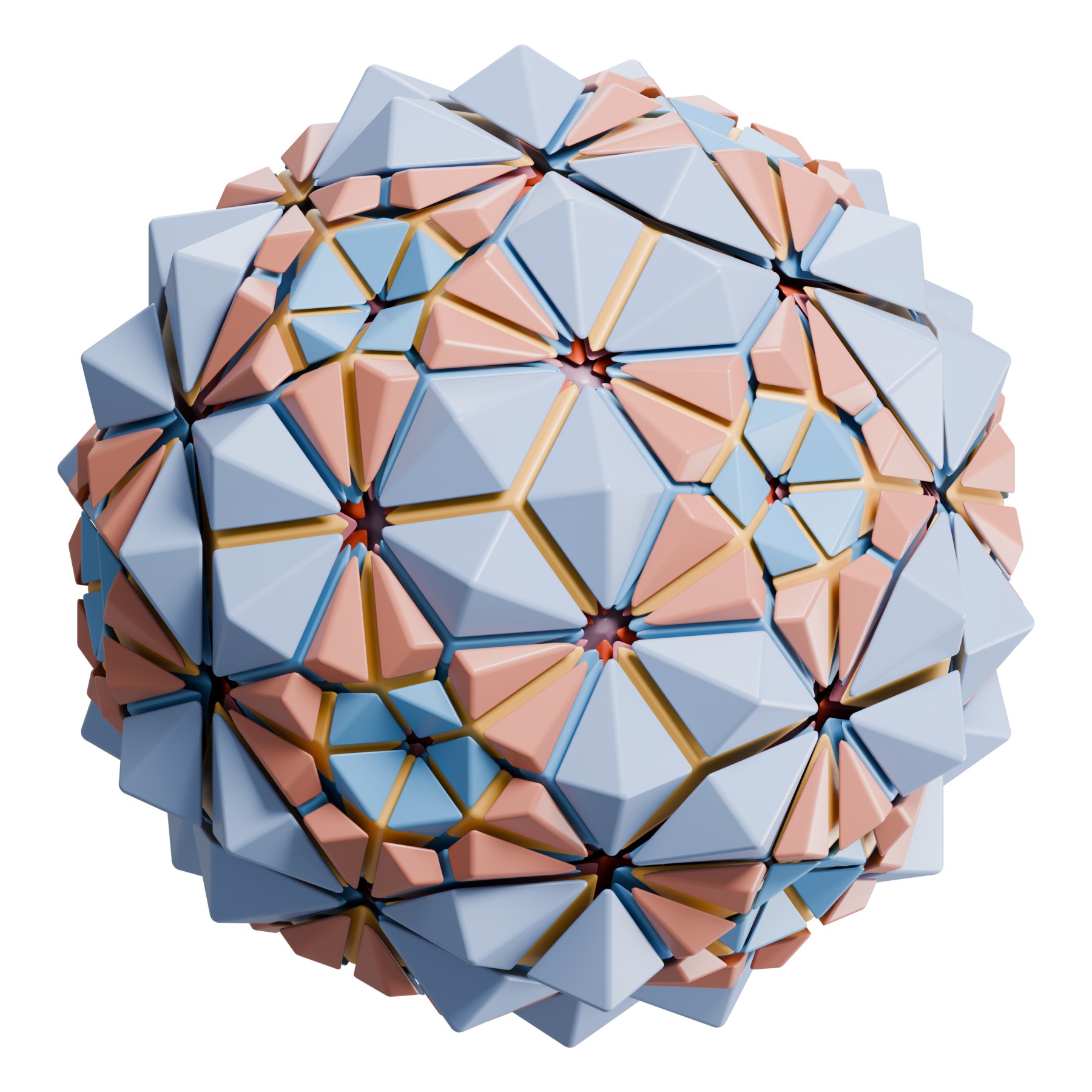}
  \caption{Peeling the central three-dimensional section of the star cut ($29\,617$ pieces,
  Figure~\ref{fig:sections} top left).  Left: cut in half along a mirror plane.  Middle: the
  pieces whose centroids lie within $0.25$ of the outermost centroid distance, cut in half
  ($2\,491$ pieces): the central section of the inner 120-cell $C$ sits in a cradle of the sections
  of the cells around it.  Right: within $0.18$, the central piece and the closed first shell of
  $622$ pieces around it (explosion reduced).}
  \label{fig:peel_section}
\end{figure}

\paragraph{Sections.}  A three-dimensional hyperplane $u\cdot x=s$ with $u$ a unit facet normal
meets $29\,617$ cells for $s=0$, $112\,225$ for $s=0.4$, $28\,573$ for $s=0.8$ and $6\,929$ for
$s=0.95$ (fractions of the inradius); the sections of the exploded cells are shown in
Figure~\ref{fig:sections}, with the explosion of Proposition~\ref{prop:explode}.  The central
section is the least crowded of the four for a reason worth stating: the hyperplane $u\cdot x=0$
is a mirror of $\Hfour$, so every cell is either symmetric under the reflection or forms a mirror
pair with another cell, and a cell of a mirror pair lies on one side of the hyperplane
(a cell that crosses a mirror contains a fixed interior point and is therefore its own image).
Only the $29\,617$ symmetric cells are cut; the hyperplane at $s=0.014$, just beside the
mirror, cuts $161\,869$.  Sweeping $s$ from $-1$ to $1$ produces the animation in the repository.

\begin{figure}[tb]
  \centering
  \fig[0.49\linewidth]{figures/cell120_facet.jpg}\hfill
  \fig[0.49\linewidth]{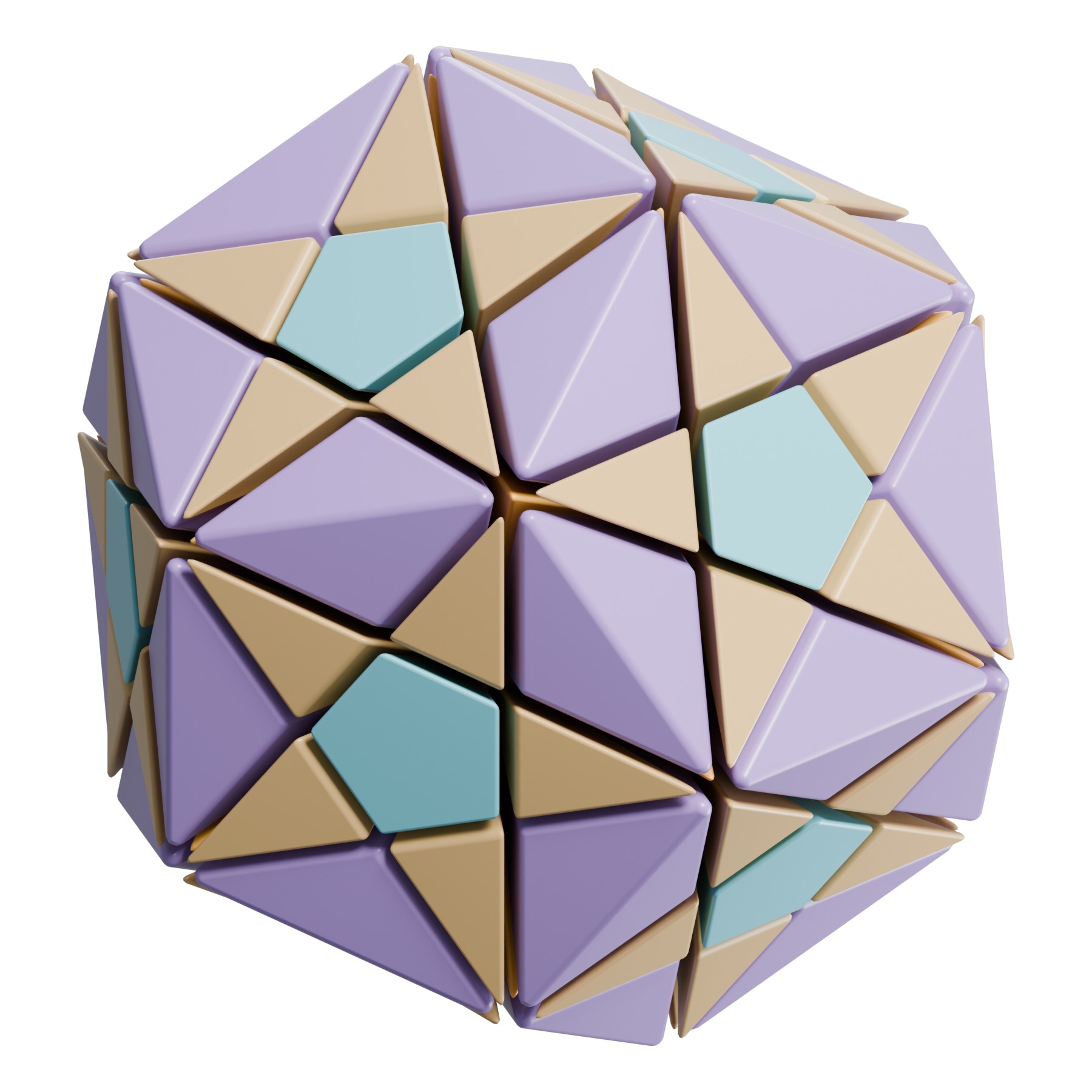}
  \caption{The pattern induced on one dodecahedral facet of the 120-cell, exploded inside the
  facet's $3$-space.  Left: star cut, $3\,655$ pieces.  Right: hereditary cut, $165$ pieces, the
  three-dimensional stellation dissection of Figure~\ref{fig:dodeca}.}
  \label{fig:facets}
\end{figure}

\begin{figure}[tb]
  \centering
  \fig[0.49\linewidth]{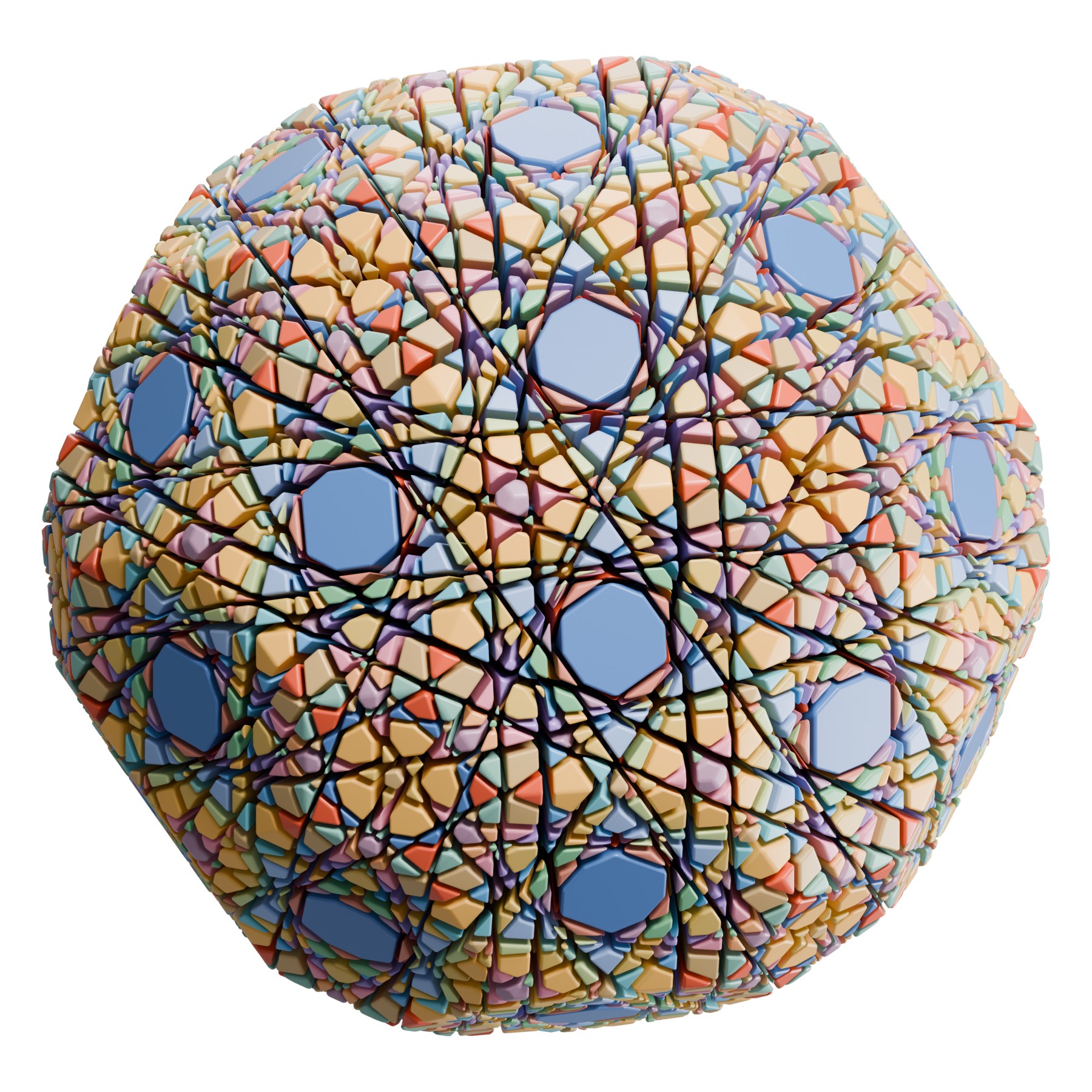}\hfill
  \fig[0.49\linewidth]{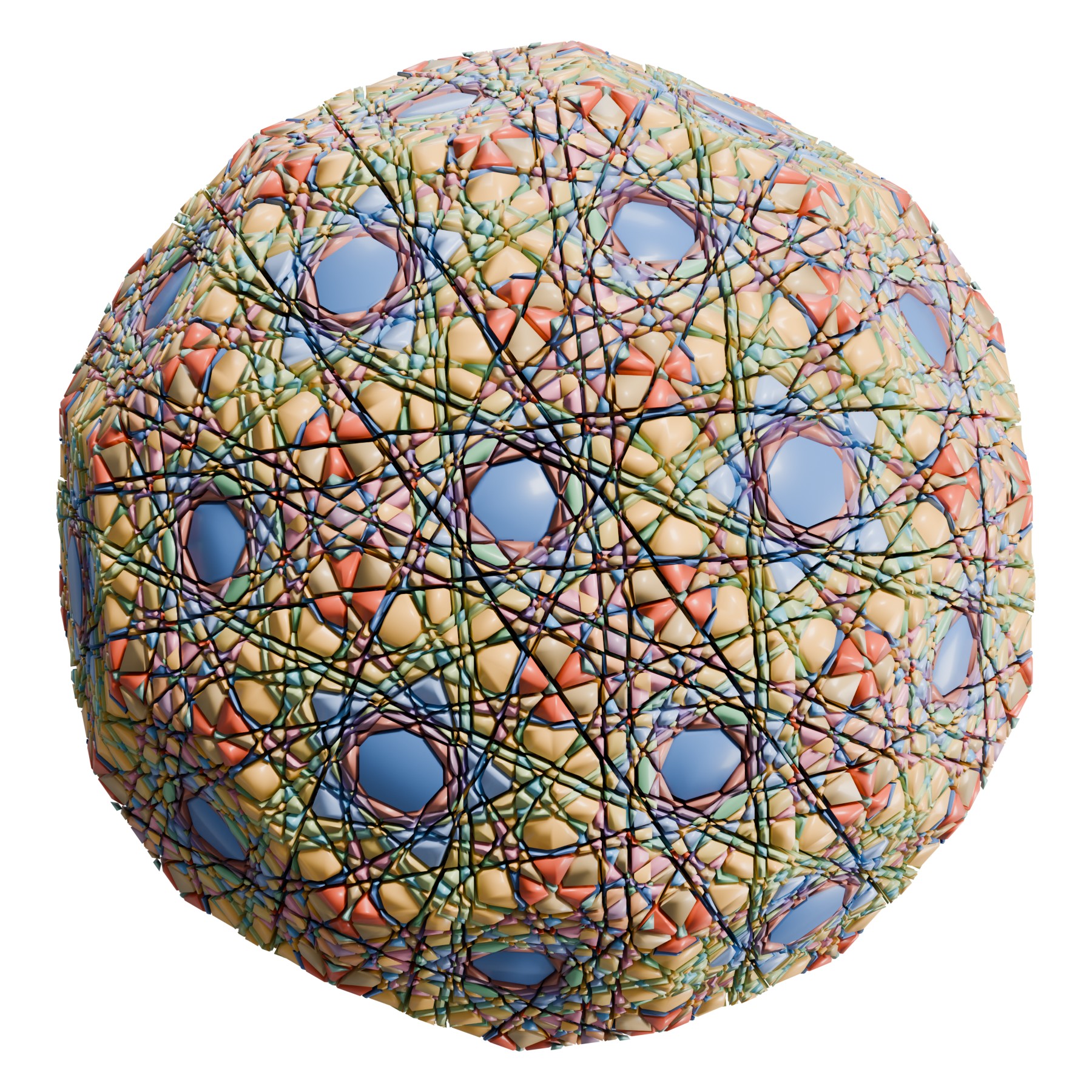}\\[3pt]
  \fig[0.49\linewidth]{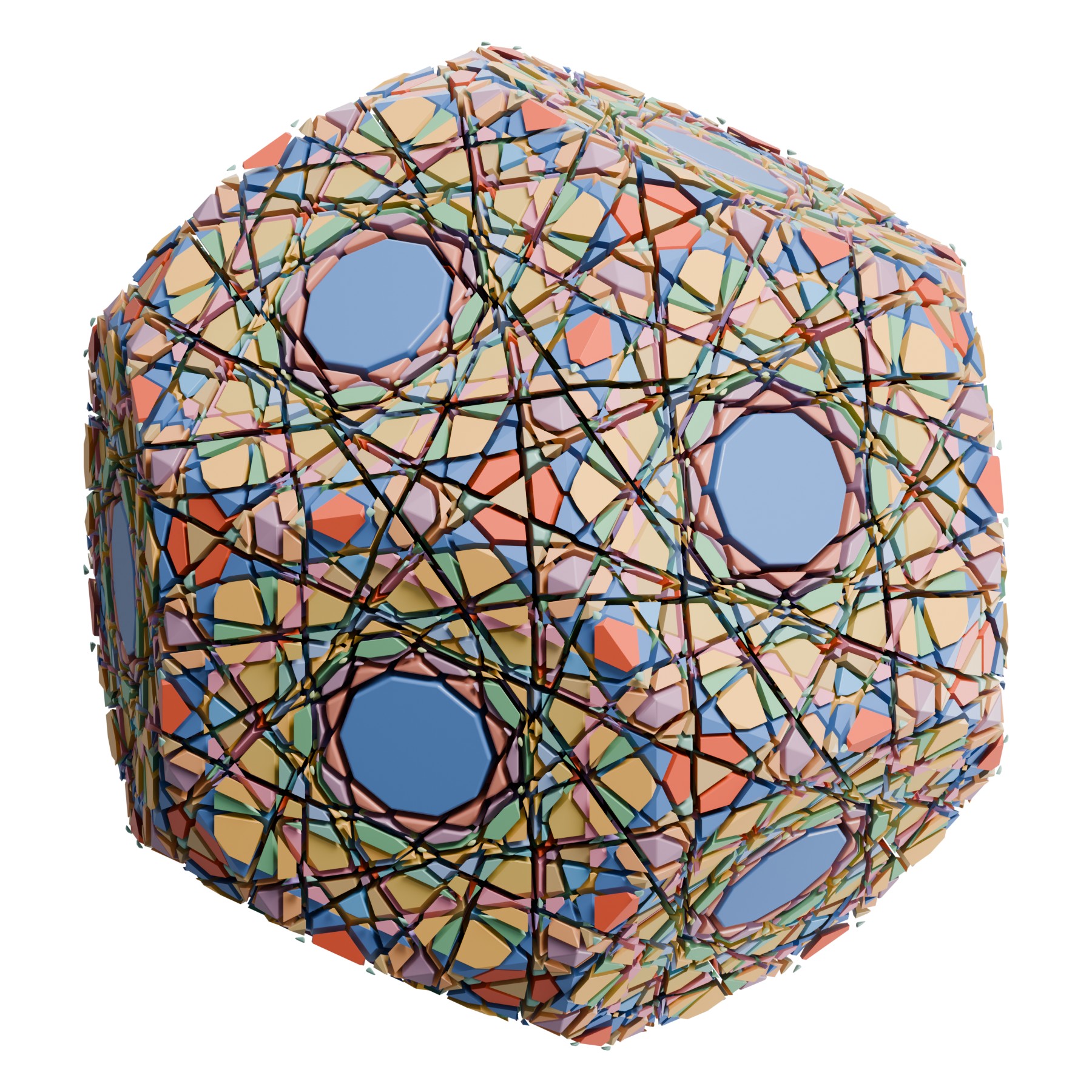}\hfill
  \fig[0.49\linewidth]{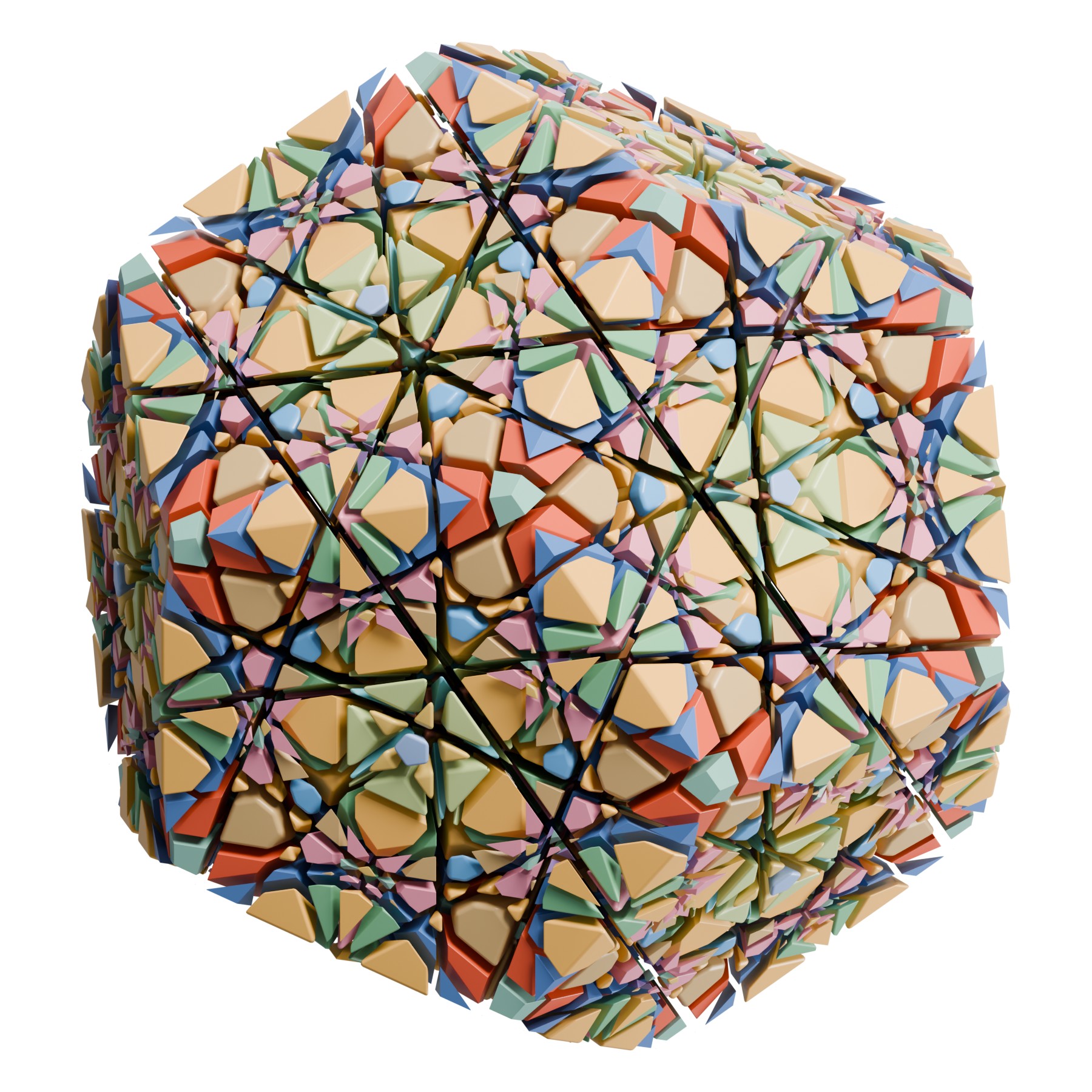}
  \caption{Three-dimensional sections of the star cut of the 120-cell by hyperplanes orthogonal to
  a facet normal, at $s=0$, $0.4$ (top) and $0.8$, $0.95$ (bottom) of the inradius.  Each piece is
  the section of one four-dimensional cell, coloured by class and slightly exploded.}
  \label{fig:sections}
\end{figure}

\subsection{The hereditary cut}
\begin{result}[hereditary cut]\label{res:hered}
The 120-cell cut by the $120$ hyperplanes \eqref{eq:heredcut} has $\NHERED$ cells in
$\NHEREDCLASSES$ orbits (Table~\ref{tab:hered}).  Volumes add up to $\vol B$ within
$4\cdot10^{-15}$.
\end{result}

\begin{table}[t]
  \centering\small
  \begin{tabular}{rrllr}
\toprule
class & cells & $f$-vector & facets (vertex counts) & $4$-volume\\
\midrule
0 & 1 & $(600,1200,720,120)$ & $20^{120}$ & 142.792414 \\
1 & 120 & $(40,80,54,14)$ & $10^{12} 20^{2}$ & 0.279267 \\
2 & 600 & $(18,46,40,12)$ & $5^{4} 7^{4} 10^{4}$ & 0.056763 \\
3 & 720 & $(17,36,28,9)$ & $6^{2} 8^{5} 10^{2}$ & 0.028597 \\
4 & 1\,200 & $(11,31,31,11)$ & $4^{6} 6^{3} 7^{2}$ & 0.016186 \\
5 & 1\,200 & $(14,28,22,8)$ & $4^{3} 8^{3} 10^{2}$ & 0.012714 \\
6 & 720 & $(7,16,16,7)$ & $4^{5} 6^{2}$ & 0.001157 \\
7 & 3\,600 & $(5,10,10,5)$ & $4^{5}$ & 0.000518 \\
\bottomrule
\end{tabular}

  \caption{The eight orbits of the hereditary cut.}\label{tab:hered}
\end{table}

The pieces are easy to describe.  Class $0$ is the core $(1-\varphi^{-4})B$.  Class $1$ consists of
$120$ \emph{frusta}, $f=(40,80,54,14)$, one under each facet $F$ of $B$: a facet of the core (a
dodecahedron of circumradius $0.914$) is joined to the central dodecahedron of $F$'s own
stellation dissection (circumradius $0.253=\varphi^{-3}\cdot1.071$, on the boundary of $B$) by twelve
cells with $10$ vertices, pentagonal frusta, one under each face of $F$; the cell narrows towards
the boundary.  Class
$7$ consists of $3\,600$ $4$-simplices, the four-dimensional relatives of the $90$ tetrahedra of
the dodecahedron.  The remaining five orbits have sizes $600$, $720$, $1200$, $1200$ and $720$,
the numbers of vertices, pentagons, edges, edges and pentagons of the 120-cell.  Each of the $600$
cells of class $2$ ($f=(18,46,40,12)$) has exactly one vertex of $B$ among its vertices, and each
of the $1200$ cells of class $4$ has two, the ends of an edge of $B$; the cells of classes $3$,
$5$ and $6$ have no vertex of $B$ and lie between the core and the boundary.  Figure~\ref{fig:hered} shows the eight pieces and a central section.

\begin{figure}[tb]
  \centering
  \begin{tikzpicture}
    \node[anchor=south west,inner sep=0] (img) at (0,0) {\fig[0.6\linewidth]{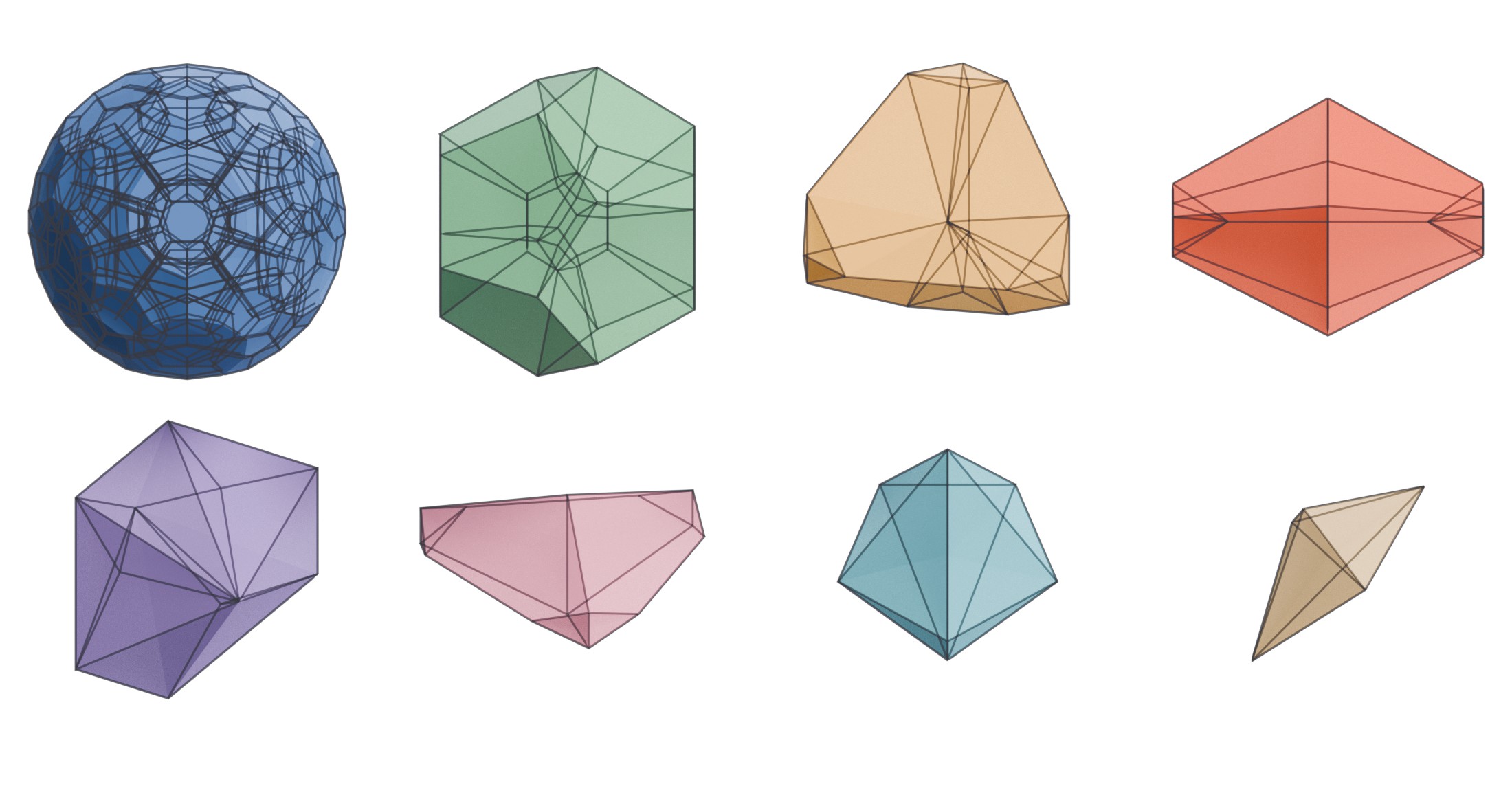}};
    \begin{scope}[x={(img.south east)},y={(img.north west)}]
      \IfFileExists{figures/cell120_galleryB_labels.tex}{\input{figures/cell120_galleryB_labels.tex}}{}
    \end{scope}
  \end{tikzpicture}\hfill
  \fig[0.38\linewidth]{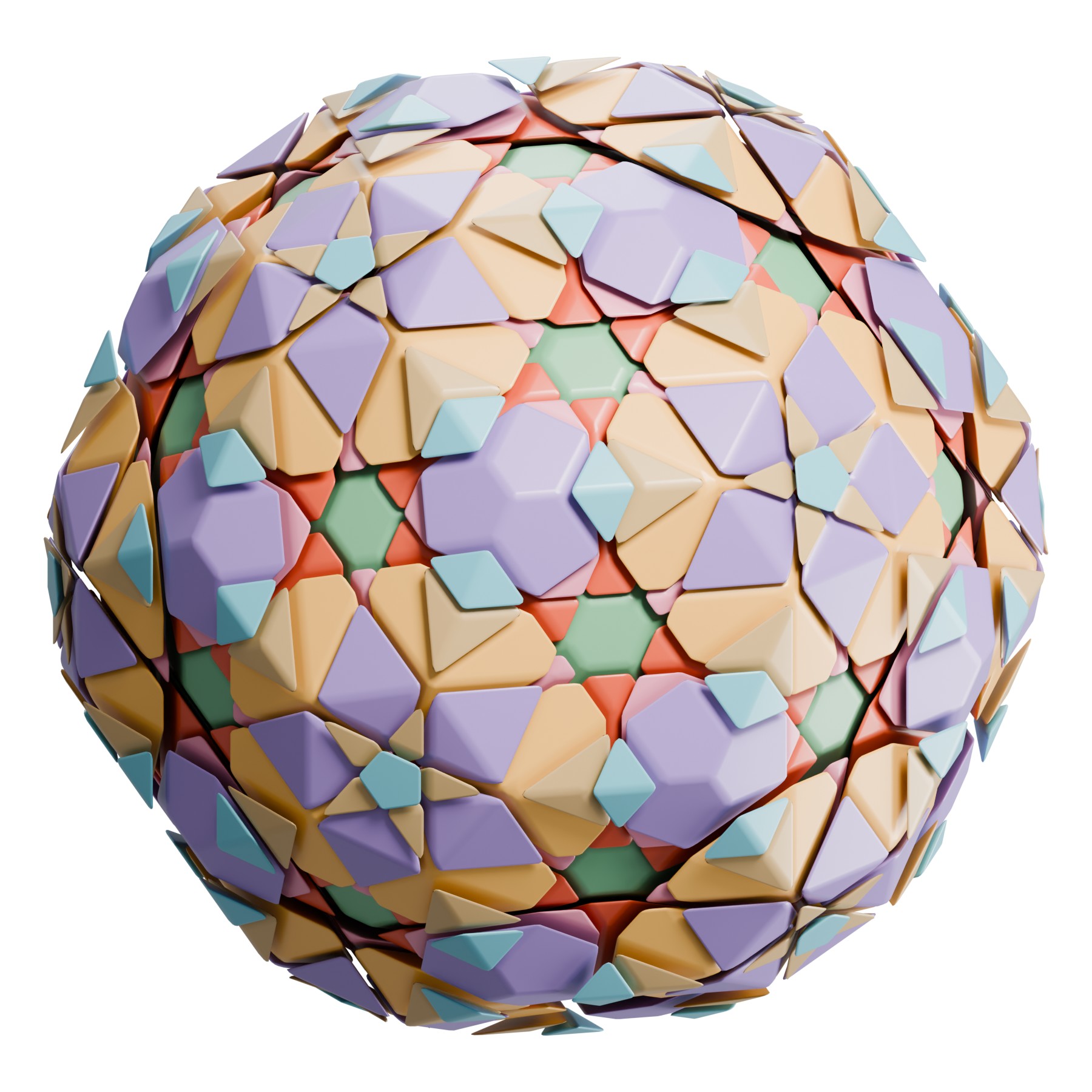}
  \caption{The hereditary cut: the eight orbits (left, perspective projections) and the central
  three-dimensional section, exploded (right).}
  \label{fig:hered}
\end{figure}

\subsection{The vertices of the four-dimensional pieces}\label{sec:vert4}
The slicer writes one representative cell per orbit with floating-point vertices.  For each
representative vertex we identified the hyperplanes through it from the floats, intersected four
independent ones by Cramer's rule in $\Qphi$, and checked the exact point against every incident
hyperplane exactly (Appendix~\ref{app:checks}, blocks~J and K).

\begin{result}\label{res:vert4}
\begin{itemize}
\item \textbf{Hereditary cut.}  The $8$ representatives have $712$ vertices, $665$ distinct, all
in $\Zphi^4$ with no denominators and all in the vertex module $\MV$ (hence in $\Ico$).  Since the
reflections of $\Hfour$ preserve $\MV$ (Result~\ref{res:modules}), \emph{every vertex of every one
of the $\NHERED$ cells lies in $\MV\subset\Zphi^4\subset\Ico$}.  Incidences: $678$ simple vertices,
$30$ on six hyperplanes, $4$ on eight.
\item \textbf{Star cut.}  The $\NSTARCLASSES$ representatives have $3\,461$ vertices, $1\,748$
distinct, with coordinate denominators $1$ ($1\,362$ vertices), $2$ ($198$), $3$ ($23$) and $5$
($165$).  Of them $1\,362$ lie in $\Zphi^4$, $1\,560$ in $\Ico$ and $1\,295$ in $\MV$; $30x\in\Zphi^4$
and $15x\in\Ico$ for every vertex $x$.  Up to $20$ hyperplanes pass through one vertex.
\end{itemize}
\end{result}

So the hereditary dissection, the one that restricts to the three-dimensional dissection on every
facet, is a dissection \emph{of the module $\MV$}, exactly as the dodecahedron's is a dissection
of $\M$ (Result~\ref{res:vert3}).  The star cut is not: it needs the inert prime $3$ and the
ramified prime $5$ in its denominators, besides the $2$ that already divides the normals.  We did
not expect the $3$; it is the signature of vertices where many mirrors of $\Hfour$ meet (up to
$20$ hyperplanes through one point), and it makes the star cut the less arithmetic of the two, the
opposite of what its unit depth suggests.  Taken together with the depths in
Table~\ref{tab:bifurcation}: the star cut has the unit \emph{scale} and the hereditary cut has the
module \emph{vertices}; neither has both.

\subsection{Cross-checks}
Independent checks of the four-dimensional computation: (i) the same program with $d=3$
reproduces Results~\ref{res:seven} and \ref{res:phase}; (ii) the facet counts $3\,655$ and $165$ were
obtained a second time by restricting the hyperplanes to the facet and running the
three-dimensional code; (iii) all volumes are positive, all cells satisfy Euler's relation and the
total volume is right to $10^{-12}$; (iv) all class sizes divide $|\Hfour|$; (v) the orbit sizes of the hereditary cut are
numbers of elements of the 120-cell ($120$ facets, $600$ vertices, $1200$ edges, $720$
pentagons, $3600=3\cdot1200$), and the cells of the orbits of sizes $600$ and $1200$ do contain a
vertex, respectively an edge, of $B$; (vi) every representative vertex of both cuts is reproduced
exactly from its incident hyperplanes (Result~\ref{res:vert4}), which also confirms the incidence
sets the slicer recorded.

\section{Coda: the pentagram in a circle, and its relatives}\label{sec:balls}

The pentagram is usually drawn inside a circle.  The circle is the circumcircle of the pentagon,
and the five diagonals are its chords, so the picture is the stellation dissection applied to the
\emph{disc} instead of the pentagon.  It has eleven pieces as well: the core pentagon, the five
points, and five circular caps that are the gnomons with the circular segments beyond the sides
attached.  The five regions of the plane beyond two diagonal lines at a vertex touch the circle only
at that vertex and contribute nothing.

\paragraph{A cultural aside.}  The pentagram in a circle is also, of course, the most recognisable
occult emblem of the Western world, and it is worth recording how a figure of Euclidean geometry
acquired that reputation.  To the Pythagoreans the pentagram was a sign of recognition and of
health \cite{Huntley1970}; in the Renaissance Agrippa drew the human figure inscribed in it as the
microcosm \cite{Agrippa1533}; and in the occult revival of the nineteenth century \'Eliphas L\'evi
placed a pentagram on the forehead of his Baphomet, the goat-headed figure of \emph{Dogme et
rituel de la haute magie} (1854--56), while assigning the \emph{inverted} pentagram, ``two points
up'', to the goat of the witches' sabbath \cite{Levi1856}.  The inverted pentagram with a goat's
head drawn inside it and the Hebrew letters of \emph{Leviathan} at its points first appears in
Stanislas de Guaita's \emph{La clef de la magie noire} (1897) \cite{Guaita1897}; Anton LaVey's
Church of Satan, founded in 1966, adopted that drawing as its emblem, printed it on the cover of
\emph{The Satanic Bible} (1969) and named it the Sigil of Baphomet \cite{LaVey1969,WikiSigil}, which
is how the chain inverted pentagram $\to$ Baphomet $\to$ Satanism $\to$ evil entered popular
culture.  None of this has anything to do with the golden ratio.  The reader who has followed the
pentagram from the pentagon to the dodecahedron and the 120-cell may decide for themselves
whether its four-dimensional relative, drawn on the $3$-sphere by $120$ hyperplanes, is any more
sinister than the plane one.

The same happens one dimension up, but not two (Figure~\ref{fig:balls}).

\begin{result}\label{res:balls}
Cut the circumscribed ball of the dodecahedron by the twelve planes of the stellation dissection: $165$
pieces in $7$ orbits of $I_h$ of sizes $1,12,12,20,30,30,60$, the same numbers as for the
dodecahedron.  Cut the circumscribed $4$-ball of the 120-cell by the $120$ hyperplanes
\eqref{eq:starcut}: \NBALLFOUR\ pieces in \NBALLFOURCLASSES\ classes.  Of these,
$\NSTAR$ pieces correspond one-to-one to the cells of the star cut (they have the same sign
vectors), and $\NBALLFOUREXTRA$ pieces in $\NBALLFOUREXTRACLASSES$ orbits, of sizes $1440$,
$7200$, $7200$, $7200$ and $14\,400$, lie entirely in the gap between the 120-cell and its
circumsphere.
\end{result}

In the plane and in space the counts agree for the same reason: the cut hyperplanes pass through
the vertices of $B$, which lie on the sphere, and every cell of the arrangement outside $B$ either
touches the sphere at a single vertex, contributing nothing, or meets it in a cap that is glued to
a piece of $B$'s cut.  The pieces that reach the sphere change shape, of course: the $30$ wedges
of the dodecahedron acquire spherical caps.  In four dimensions the second alternative is not
exhaustive: $42$ hyperplanes cross each facet of $B$ and continue into the gap between the facet
and the sphere, where they intersect, and the five orbits of cells they cut out there are
entirely new.  The largest of them, $1440$ cells with $f=(14,37,35,12)$ and volume $0.012$
before clipping, would be the $11$th largest class of the star cut; the central section of the
$4$-ball has $30\,097$ pieces against the $29\,617$ of the 120-cell.  (The computation treats the
ball exactly like the polytope: the arrangement is built inside the polytope $2B$, whose facets
are also in $\Zphi$, and a cell is kept when its distance from the centre is smaller than the
circumradius, which is decided by a small quadratic program on its exact vertices; the sign
vectors identify the cells across the two computations.)

\paragraph{The stars inside.}  The first stellation of the core is a union of pieces in every
dimension: the pentagon with its five points, the small stellated dodecahedron (the core and the
twelve pentagonal pyramids, $13$ of the $165$ pieces of the ball), and the small stellated 120-cell
$\{5/2,5,3\}$, the core and the $120$ dodecahedral pyramids of class $45$, which are single cells
with $f=(21,50,42,13)$.  The stars lie well inside the balls: the spikes of the dodecahedron reach
$0.42$ of the radius of the sphere, those of the 120-cell only $2\varphi^{-3}/(2\sqrt2)=0.17$,
because the apex of a pyramid over a facet of $C$ is where the twelve neighbouring facet
hyperplanes meet, at distance $2\varphi^{-3}$ from the centre against the inradius $\varphi^{-2}$
of $C$: the dihedral angle of the 120-cell is $144^\circ$ and its spikes are shallow, of height
$2\varphi^{-3}-\varphi^{-2}=\varphi^{-5}=0.09$.
Figure~\ref{fig:balls} shows the balls cut in half along a mirror plane with the star left whole.
The mirror hyperplane $u\cdot x=0$ of the four-dimensional picture contains the axes of the $30$
spikes whose facet normals are orthogonal to $u$ and crosses no other (a spike that crossed it
would be its own mirror image), so what one sees of the $120$-pointed star in the central section
is a $30$-pointed one: the central section of $C$, which has $80$ vertices, with $30$ hexagonal
pyramids of height $\varphi^{-5}$ on it.

\begin{figure}[tb]
  \centering
  \begin{tikzpicture}[scale=2.0, line join=round]
    \def\R{1}\def\r{0.381966}
    \foreach \kk in {0,...,4} { \coordinate (v\kk) at ({90+72*\kk}:\R); \coordinate (u\kk) at ({54+72*\kk}:\r); }
    \fill[blue!25!white] (u0)--(u1)--(u2)--(u3)--(u4)--cycle;
    \foreach \kk/\kn in {0/1, 1/2, 2/3, 3/4, 4/0} { \fill[red!55!orange!60] (v\kk)--(u\kk)--(u\kn)--cycle; }
    \foreach \kk/\kn in {0/1, 1/2, 2/3, 3/4, 4/0} { \fill[green!50!teal!40] (v\kk)--(u\kn)--(v\kn) arc ({162+72*\kk}:{90+72*\kk}:\R) -- cycle; }
    \draw[thick,black!80] (0,0) circle (\R);
    \foreach \kk/\kn in {0/2, 1/3, 2/4, 3/0, 4/1} { \draw[thick,black!80] (v\kk)--(v\kn); }
  \end{tikzpicture}\hfill
  \fig[0.32\linewidth]{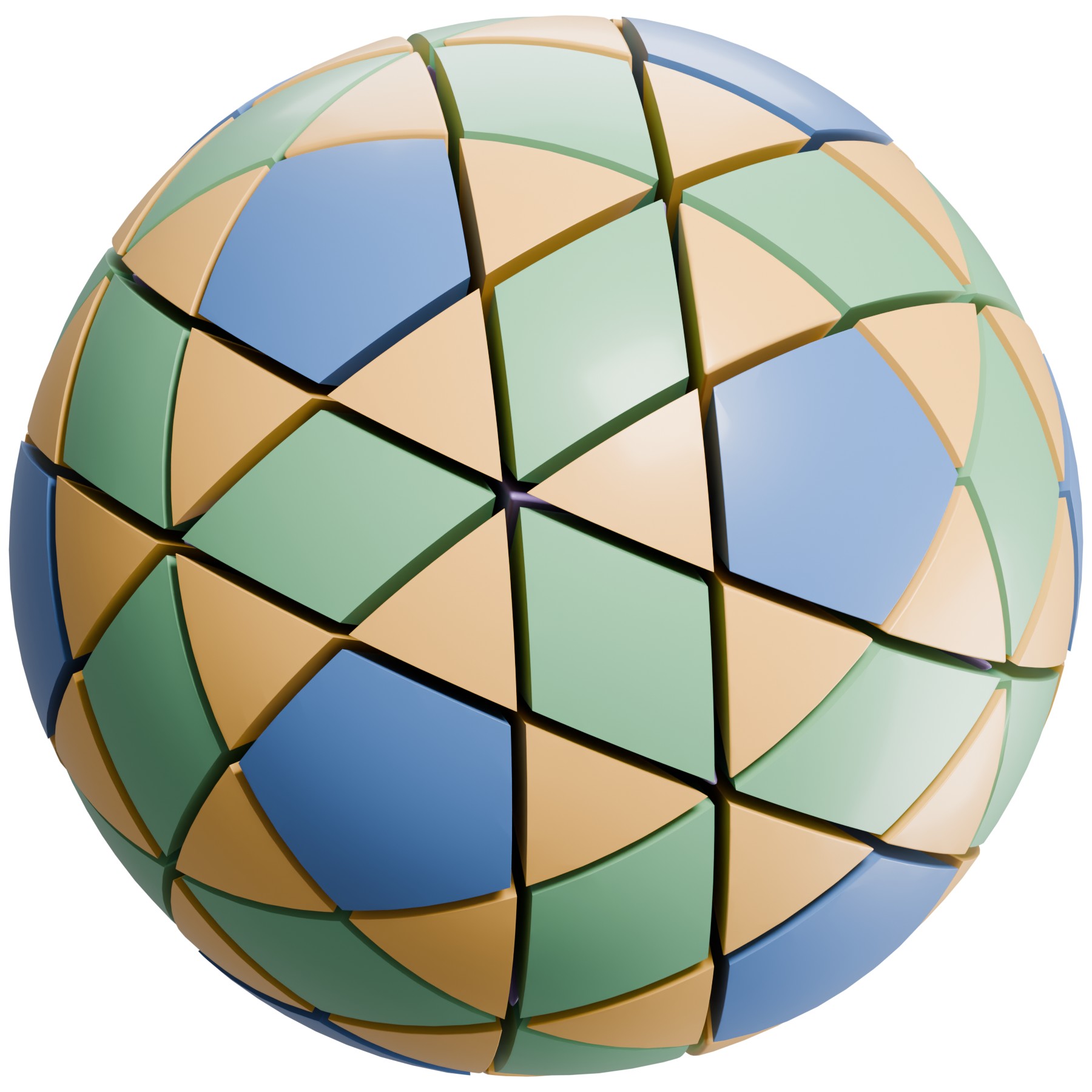}\hfill
  \fig[0.32\linewidth]{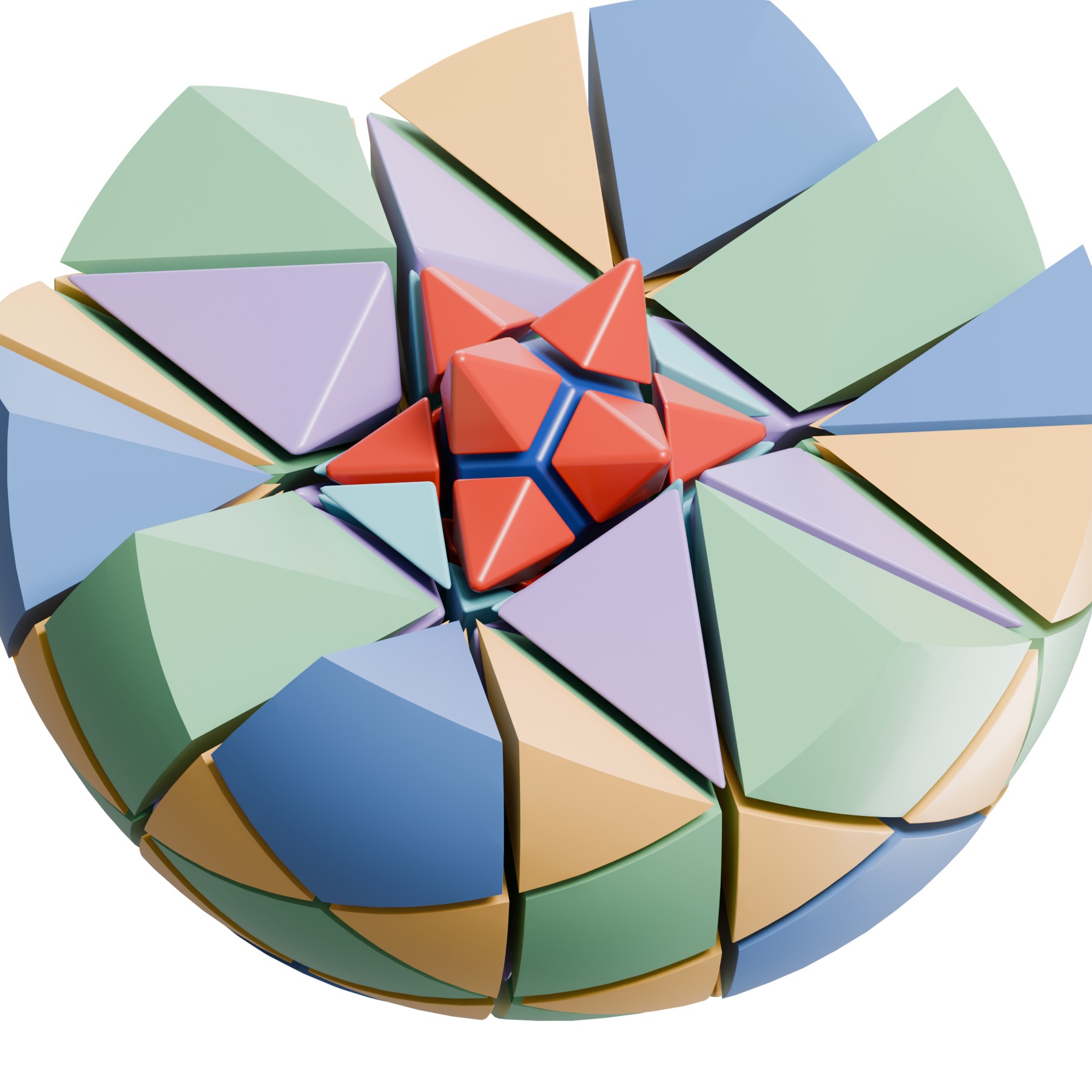}\\[3pt]
  \fig[0.32\linewidth]{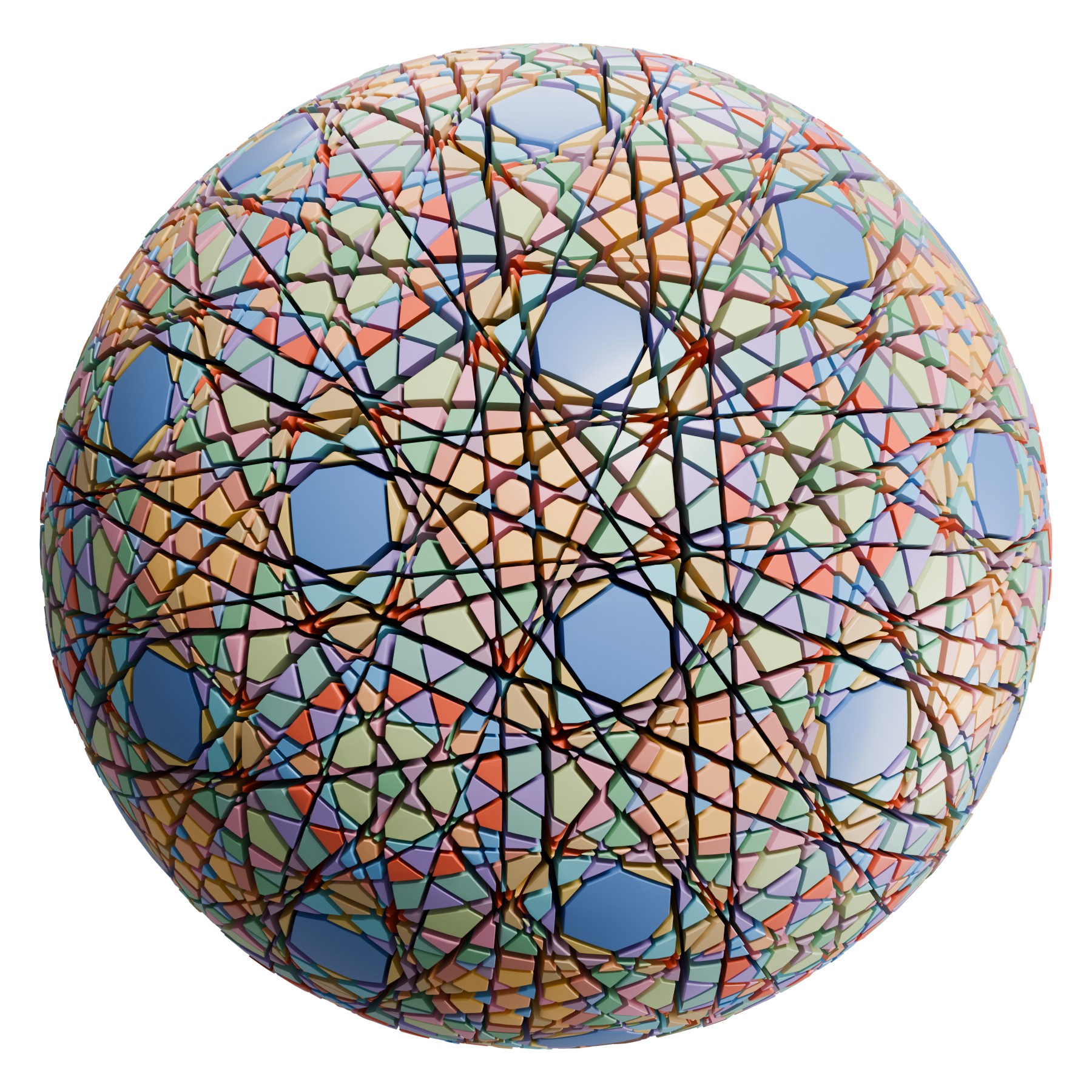}\hfill
  \fig[0.32\linewidth]{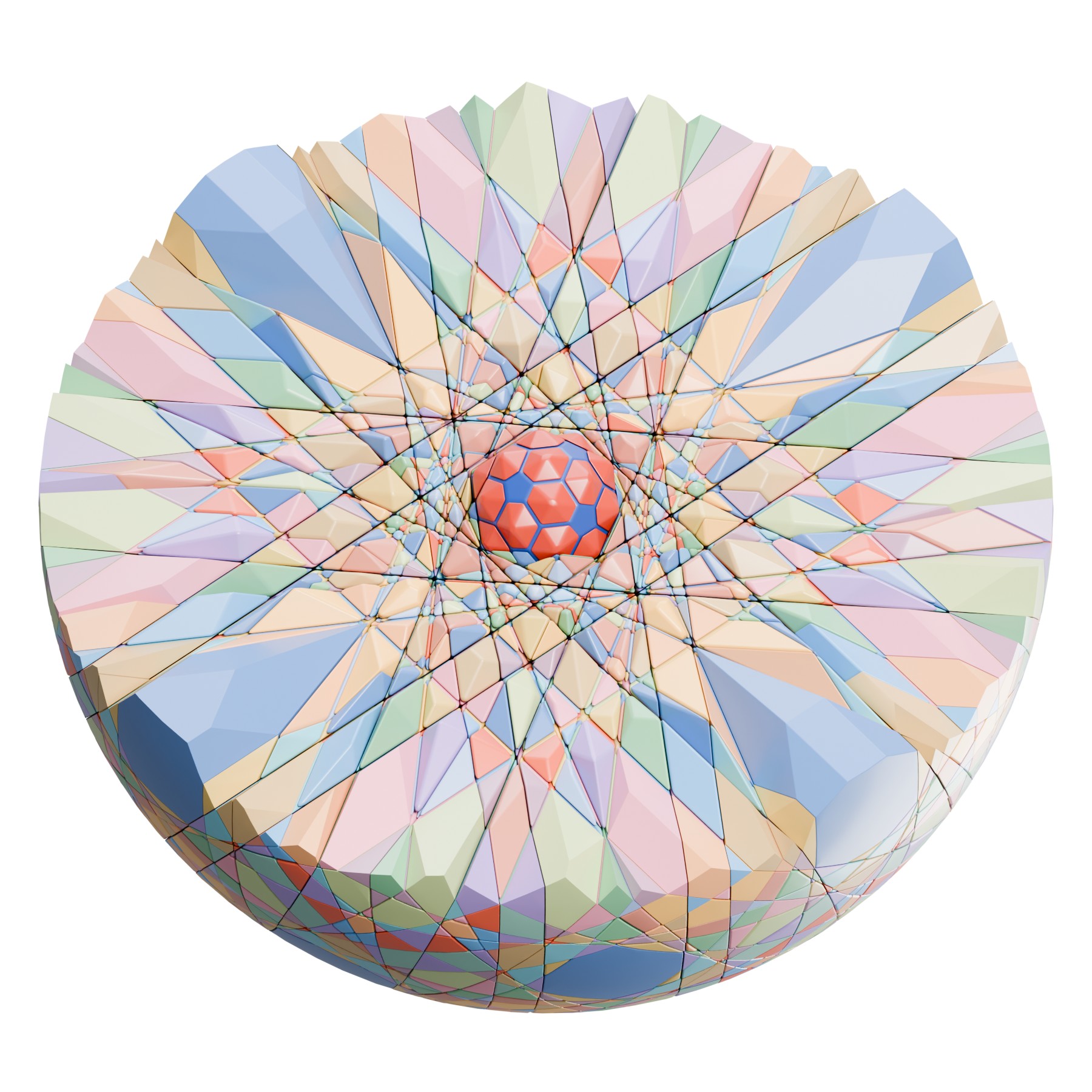}\hfill
  \fig[0.32\linewidth]{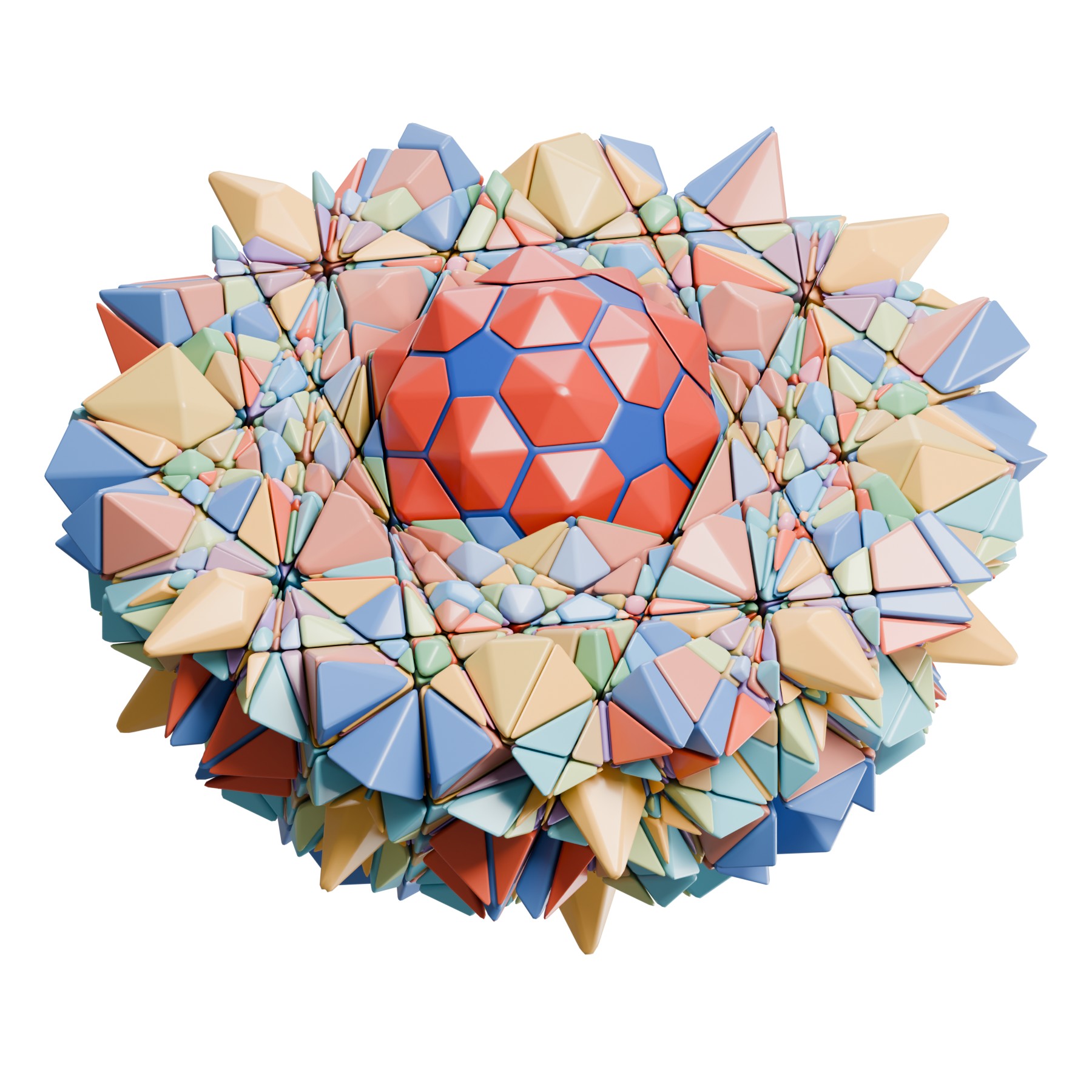}
  \caption{Top: the stellation dissection of the disc ($11$ pieces) and of the ball ($165$ pieces,
  exploded), and the ball cut in half along a mirror plane with the small stellated dodecahedron
  inside it left whole (core in blue, the twelve spikes in red).  Bottom: the central
  three-dimensional section of the dissected $4$-ball (exploded pieces, each cut off by the sphere);
  the same section cut in half along a mirror plane, with the section of the small stellated
  120-cell left whole at the centre; and the pieces within $0.22$ of the outermost centroid
  distance around that section, cut in half.  The hyperplane of the section passes through the
  axes of $30$ of the $120$ spikes, so the section of the four-dimensional star has $30$ points
  (in red) on the $80$-vertex central section of the core (in blue).}
  \label{fig:balls}
\end{figure}

\section{Discussion}\label{sec:discussion}

The stellation dissection is a natural object: in dimensions two and three it is classical, and in
dimension four it was announced in 1995 but, as far as we can tell, never carried out.  The
obstacle was never conceptual.  It was the combination of degenerate incidences (five planes
through a point everywhere in dimension three, up to twenty hyperplanes through a point in
dimension four) with the size of the computation, and the obstacle disappears once one notices
that everything lives in $\Zphi$, where a sign is a fact and not an estimate, and that cells can
be streamed instead of stored.  The two numbers $\NSTAR$ and $\NHERED$, and the
$\NSTARCLASSES$ and $\NHEREDCLASSES$ orbits, are the main results; the catalogue in
Appendix~\ref{app:catalogue} is the raw material for whatever comes next.

The ring did more than make the computation possible.  It explains the bifurcation
(a unit against the ramified prime), it predicts the critical depths of the phase diagram as
heights of vertex, face-centre and edge layers, and it supplied the two statements about the
pieces that we find most suggestive: the dissection of the dodecahedron and the hereditary
dissection of the 120-cell are dissections of the vertex modules $\M$ and $\MV$, while the star
cut needs the primes $2$, $3$ and $5$ in its denominators.  Several questions remain open.
\begin{enumerate}
\item \emph{Rigorous orbits in dimension four.}  Replace the invariant-plus-divisibility argument
of Section~\ref{sec:orbits} by canonical sign vectors under the $14\,400$ permutations induced by
$\Hfour$, as Result~\ref{res:orbits} does for $I_h$; the exact reflections of
Result~\ref{res:modules} and the stored sign vectors are all that is needed.
\item \emph{Four-dimensional phase diagram.}  The cell count as a function of the depth $t$; the
candidate critical depths are the layer heights \eqref{eq:layers}, all elements of $\Qphi$, and the
heights of the other $\Hfour$-orbits of centres (edges, pentagons, facets).
\item \emph{Exact volumes} in $\Qphi$ from the exact boundary representation, which would turn
the volume check into an identity.  The vertices are available exactly (Result~\ref{res:vert4}).
\item \emph{The denominators $2$, $3$, $5$.}  Which flats of $\Hfour$ carry the star-cut vertices
with denominator $3$, and is there a module between $\MV$ and $\tfrac1{30}\Zphi^4$ that contains
all of them?
\item \emph{The modules.}  The icosahedral $\Z$-lattices of rank six in $\R^3$ are classified
(primitive, face-centred, body-centred \cite{RMW1987}); which of them $\Zphi^3$ and $\M$ are, and
how $\M$ relates to the $\Z^6$ projection of Kramer and Neri \cite{KramerNeri1984}, is bookkeeping
we have not done.
\item \emph{Self-similarity.}  Kramer's seven cells re-pack themselves at scale $\varphi^3$, a unit,
inside the module $\M$.  Whether the $\NSTARCLASSES$ or the $\NHEREDCLASSES$ four-dimensional
pieces do anything comparable is open; Table~\ref{tab:bifurcation} says that the star cut has the
unit scale and the hereditary cut has the module vertices, and that no single cut has both.
\item \emph{A physical model.}  The $3\,655$-piece facet pattern of Figure~\ref{fig:facets} is a
three-dimensional object and could be printed; the $165$-piece dodecahedron already exists as a
set of blocks \cite{GoodmanStrauss2008}.
\end{enumerate}

\section*{Code and data}
The code is a few thousand lines of C++17 with no dependencies beyond the standard library
(OpenGL for the interactive viewer, Blender for the figures), plus the Python script
\texttt{check\_ring.py} (standard library only) that recomputes every identity, index and census
quoted here and whose output is Appendix~\ref{app:checks}.  The repository
contains, for each computation, a script that rebuilds the binaries, reruns it and regenerates the
figures: the dodecahedron ($40$\,s), the 120-cell ($20$\,min including sections and animations),
the balls, and the exact checks ($1$\,min).  Outputs include one representative per orbit with
its full face lattice, the facet patterns, the sections, and an index of all $\NSTAR$ cells
(sign vector, class, centroid, volume).

\clearpage  

\clearpage
\appendix
\section{Catalogue of the star cut}\label{app:catalogue}
\footnotesize
\begin{longtable}{rrllr}
\caption{The $338$ orbits of the star cut of the 120-cell, in order of decreasing volume. A class is identified by its $f$-vector $(f_0,f_1,f_2,f_3)$, the vertex counts of its facets (exponents are multiplicities) and its $4$-volume; the number of cells is the orbit size.}\label{tab:catalogue}\\
\toprule
class & cells & $f$-vector & facets & $4$-volume\\
\midrule
\endfirsthead
\toprule
class & cells & $f$-vector & facets & $4$-volume\\
\midrule
\endhead
\midrule\multicolumn{5}{r}{\small continued}\\
\endfoot
\bottomrule
\endlastfoot
0 & 120 & $(53,190,180,43)$ & $6^{30} 7^{12} 32$ & 0.141153 \\
1 & 1 & $(600,1200,720,120)$ & $20^{120}$ & 0.121581 \\
2 & 3\,600 & $(19,46,41,14)$ & $4^{4} 7^{4} 8^{4} 10^{2}$ & 0.004837 \\
3 & 1\,440 & $(8,22,25,11)$ & $4^{10} 7$ & 0.004187 \\
4 & 2\,400 & $(26,72,63,17)$ & $6^{3} 7^{3} 8^{5} 9^{6}$ & 0.003717 \\
5 & 3\,600 & $(17,43,42,16)$ & $4^{4} 5^{2} 6^{4} 8^{6}$ & 0.003324 \\
6 & 2\,400 & $(6,15,18,9)$ & $4^{9}$ & 0.002512 \\
7 & 2\,400 & $(15,37,33,11)$ & $6^{6} 7^{3} 8^{2}$ & 0.002426 \\
8 & 1\,440 & $(19,53,50,16)$ & $5^{5} 7^{11}$ & 0.002283 \\
9 & 1\,440 & $(19,54,55,20)$ & $4^{5} 5^{5} 7^{10}$ & 0.002138 \\
10 & 3\,600 & $(16,36,32,12)$ & $6^{8} 8^{4}$ & 0.001911 \\
11 & 600 & $(16,48,48,16)$ & $5^{12} 8^{4}$ & 0.001812 \\
12 & 3\,600 & $(24,53,42,13)$ & $7^{2} 8^{10} 17$ & 0.001675 \\
13 & 1\,200 & $(23,59,51,15)$ & $7^{6} 8^{6} 10^{3}$ & 0.001508 \\
14 & 1\,200 & $(10,31,39,18)$ & $4^{12} 5^{6}$ & 0.001466 \\
15 & 3\,600 & $(8,21,22,9)$ & $4^{4} 5^{4} 6$ & 0.001337 \\
16 & 720 & $(9,29,40,20)$ & $4^{20}$ & 0.001222 \\
17 & 14\,400 & $(11,23,19,7)$ & $4 6^{2} 7^{3} 8$ & 0.001117 \\
18 & 7\,200 & $(13,33,32,12)$ & $4^{6} 6 7^{4} 9$ & 0.001080 \\
19 & 3\,600 & $(13,28,23,8)$ & $5^{2} 7^{4} 8^{2}$ & 0.001077 \\
20 & 7\,200 & $(14,29,23,8)$ & $5^{2} 8^{6}$ & 0.001011 \\
21 & 720 & $(14,44,50,20)$ & $5^{20}$ & 0.000933 \\
22 & 600 & $(10,34,48,24)$ & $4^{24}$ & 0.000906 \\
23 & 14\,400 & $(13,30,26,9)$ & $5 6^{4} 7^{2} 8^{2}$ & 0.000864 \\
24 & 3\,600 & $(5,10,10,5)$ & $4^{5}$ & 0.000837 \\
25 & 600 & $(16,58,66,24)$ & $5^{24}$ & 0.000692 \\
26 & 120 & $(64,152,120,32)$ & $8^{20} 12^{12}$ & 0.000626 \\
27 & 3\,600 & $(10,29,32,13)$ & $4^{6} 5^{4} 6^{3}$ & 0.000618 \\
28 & 120 & $(64,152,120,32)$ & $8^{20} 12^{12}$ & 0.000591 \\
29 & 7\,200 & $(16,43,39,12)$ & $5^{2} 6^{6} 7^{2} 9^{2}$ & 0.000568 \\
30 & 7\,200 & $(10,21,18,7)$ & $5^{2} 6^{4} 8$ & 0.000489 \\
31 & 3\,600 & $(8,20,20,8)$ & $4^{2} 5^{5} 6$ & 0.000471 \\
32 & 7\,200 & $(13,29,25,9)$ & $5 6^{4} 7^{2} 8^{2}$ & 0.000468 \\
33 & 2\,400 & $(18,41,36,13)$ & $4^{6} 8^{6} 14$ & 0.000447 \\
34 & 3\,600 & $(6,15,18,9)$ & $4^{9}$ & 0.000442 \\
35 & 7\,200 & $(7,18,20,9)$ & $4^{8} 6$ & 0.000395 \\
36 & 7\,200 & $(9,22,22,9)$ & $4^{5} 6^{3} 7$ & 0.000393 \\
37 & 7\,200 & $(14,33,29,10)$ & $5^{3} 6^{2} 7^{2} 8^{3}$ & 0.000383 \\
38 & 7\,200 & $(8,18,17,7)$ & $4^{3} 5^{2} 6 7$ & 0.000305 \\
39 & 7\,200 & $(7,17,18,8)$ & $4^{6} 5 6$ & 0.000297 \\
40 & 7\,200 & $(12,29,26,9)$ & $5^{2} 6^{4} 7^{3}$ & 0.000295 \\
41 & 7\,200 & $(8,20,20,8)$ & $4^{4} 5^{2} 6^{2}$ & 0.000286 \\
42 & 14\,400 & $(12,25,21,8)$ & $4^{3} 6^{2} 8 9^{2}$ & 0.000268 \\
43 & 2\,400 & $(9,24,24,9)$ & $5^{9}$ & 0.000265 \\
44 & 7\,200 & $(8,18,17,7)$ & $4^{3} 5^{2} 6 7$ & 0.000259 \\
45 & 120 & $(21,50,42,13)$ & $6^{12} 20$ & 0.000239 \\
46 & 7\,200 & $(10,22,20,8)$ & $4^{2} 5 6^{4} 8$ & 0.000238 \\
47 & 7\,200 & $(9,20,18,7)$ & $5^{6} 8$ & 0.000231 \\
48 & 7\,200 & $(10,26,28,12)$ & $4^{6} 5^{2} 6^{4}$ & 0.000231 \\
49 & 14\,400 & $(8,16,14,6)$ & $4^{2} 6^{4}$ & 0.000227 \\
50 & 7\,200 & $(11,23,19,7)$ & $6^{4} 7^{2} 8$ & 0.000211 \\
51 & 7\,200 & $(10,23,21,8)$ & $4^{2} 5^{2} 6^{2} 7^{2}$ & 0.000209 \\
52 & 7\,200 & $(5,10,10,5)$ & $4^{5}$ & 0.000198 \\
53 & 7\,200 & $(5,10,10,5)$ & $4^{5}$ & 0.000198 \\
54 & 7\,200 & $(9,22,22,9)$ & $4^{2} 5^{5} 6^{2}$ & 0.000187 \\
55 & 720 & $(8,22,26,12)$ & $4^{10} 6^{2}$ & 0.000169 \\
56 & 1\,200 & $(9,24,24,9)$ & $5^{9}$ & 0.000164 \\
57 & 3\,600 & $(9,25,27,11)$ & $4^{4} 5^{7}$ & 0.000164 \\
58 & 7\,200 & $(7,18,20,9)$ & $4^{6} 5^{3}$ & 0.000151 \\
59 & 7\,200 & $(10,22,20,8)$ & $4^{4} 6^{2} 8^{2}$ & 0.000148 \\
60 & 7\,200 & $(11,27,25,9)$ & $4^{2} 6^{5} 7^{2}$ & 0.000146 \\
61 & 7\,200 & $(9,20,18,7)$ & $4^{2} 5^{2} 6^{2} 8$ & 0.000142 \\
62 & 7\,200 & $(11,28,26,9)$ & $5^{4} 6^{4} 7$ & 0.000140 \\
63 & 14\,400 & $(9,20,18,7)$ & $4 5 6^{5}$ & 0.000133 \\
64 & 14\,400 & $(10,21,18,7)$ & $4 6^{4} 7^{2}$ & 0.000133 \\
65 & 2\,400 & $(16,45,42,13)$ & $5^{7} 7^{3} 8^{3}$ & 0.000128 \\
66 & 14\,400 & $(8,16,14,6)$ & $4^{2} 6^{4}$ & 0.000128 \\
67 & 7\,200 & $(6,14,16,8)$ & $4^{8}$ & 0.000122 \\
68 & 3\,600 & $(6,14,16,8)$ & $4^{8}$ & 0.000122 \\
69 & 14\,400 & $(10,21,18,7)$ & $4^{2} 5 6 7^{2} 8$ & 0.000120 \\
70 & 14\,400 & $(10,21,18,7)$ & $5^{2} 6^{4} 8$ & 0.000119 \\
71 & 7\,200 & $(12,27,24,9)$ & $6^{8} 8$ & 0.000118 \\
72 & 7\,200 & $(12,28,25,9)$ & $5^{4} 6^{4} 10$ & 0.000117 \\
73 & 3\,600 & $(7,17,18,8)$ & $4^{4} 5^{4}$ & 0.000115 \\
74 & 7\,200 & $(9,19,17,7)$ & $4^{3} 6^{2} 7^{2}$ & 0.000114 \\
75 & 2\,400 & $(6,14,15,7)$ & $4^{6} 5$ & 0.000113 \\
76 & 14\,400 & $(7,15,14,6)$ & $4^{2} 5^{3} 6$ & 0.000111 \\
77 & 3\,600 & $(6,14,16,8)$ & $4^{8}$ & 0.000109 \\
78 & 7\,200 & $(9,19,17,7)$ & $4^{3} 6^{2} 7^{2}$ & 0.000108 \\
79 & 2\,400 & $(9,23,24,10)$ & $4^{6} 5^{3} 8$ & 0.000107 \\
80 & 2\,400 & $(9,24,24,9)$ & $5^{9}$ & 0.000107 \\
81 & 3\,600 & $(9,25,28,12)$ & $4^{6} 5^{6}$ & 0.000102 \\
82 & 7\,200 & $(10,24,23,9)$ & $4^{2} 5^{4} 6^{2} 8$ & 0.000100 \\
83 & 3\,600 & $(9,25,28,12)$ & $4^{6} 5^{6}$ & 0.000099 \\
84 & 7\,200 & $(6,13,13,6)$ & $4^{4} 5^{2}$ & 0.000099 \\
85 & 3\,600 & $(12,31,32,13)$ & $4^{4} 6^{9}$ & 0.000092 \\
86 & 7\,200 & $(6,14,15,7)$ & $4^{6} 5$ & 0.000088 \\
87 & 7\,200 & $(10,22,20,8)$ & $4^{2} 5 6^{4} 8$ & 0.000087 \\
88 & 14\,400 & $(10,21,18,7)$ & $4 6^{4} 7^{2}$ & 0.000087 \\
89 & 7\,200 & $(6,14,15,7)$ & $4^{6} 5$ & 0.000076 \\
90 & 14\,400 & $(5,10,10,5)$ & $4^{5}$ & 0.000076 \\
91 & 3\,600 & $(6,14,16,8)$ & $4^{8}$ & 0.000076 \\
92 & 3\,600 & $(7,18,21,10)$ & $4^{8} 5^{2}$ & 0.000071 \\
93 & 3\,600 & $(7,19,24,12)$ & $4^{12}$ & 0.000071 \\
94 & 2\,400 & $(9,24,24,9)$ & $5^{9}$ & 0.000066 \\
95 & 3\,600 & $(9,24,24,9)$ & $5^{9}$ & 0.000066 \\
96 & 3\,600 & $(11,27,27,11)$ & $4^{4} 5 6^{4} 7^{2}$ & 0.000063 \\
97 & 7\,200 & $(15,36,32,11)$ & $5^{2} 6^{4} 7^{2} 8^{3}$ & 0.000061 \\
98 & 7\,200 & $(16,42,38,12)$ & $5^{3} 6^{5} 7^{2} 8 10$ & 0.000061 \\
99 & 7\,200 & $(9,19,17,7)$ & $4^{3} 6^{2} 7^{2}$ & 0.000061 \\
100 & 14\,400 & $(5,10,10,5)$ & $4^{5}$ & 0.000061 \\
101 & 7\,200 & $(5,10,10,5)$ & $4^{5}$ & 0.000061 \\
102 & 2\,400 & $(13,43,45,15)$ & $5^{12} 6^{3}$ & 0.000058 \\
103 & 7\,200 & $(6,13,13,6)$ & $4^{4} 5^{2}$ & 0.000058 \\
104 & 720 & $(8,23,30,15)$ & $4^{15}$ & 0.000055 \\
105 & 14\,400 & $(5,10,10,5)$ & $4^{5}$ & 0.000055 \\
106 & 7\,200 & $(10,26,27,11)$ & $4^{6} 5^{2} 6^{2} 8$ & 0.000055 \\
107 & 7\,200 & $(9,19,17,7)$ & $4^{3} 6^{2} 7^{2}$ & 0.000054 \\
108 & 1\,200 & $(9,27,33,15)$ & $4^{12} 5^{3}$ & 0.000053 \\
109 & 7\,200 & $(7,17,18,8)$ & $4^{6} 5 6$ & 0.000052 \\
110 & 600 & $(6,14,16,8)$ & $4^{8}$ & 0.000052 \\
111 & 7\,200 & $(11,27,25,9)$ & $4^{2} 6^{5} 7^{2}$ & 0.000051 \\
112 & 2\,400 & $(6,14,15,7)$ & $4^{6} 5$ & 0.000051 \\
113 & 14\,400 & $(7,16,16,7)$ & $4^{4} 5^{2} 6$ & 0.000050 \\
114 & 1\,200 & $(9,24,27,12)$ & $4^{6} 5^{6}$ & 0.000048 \\
115 & 720 & $(20,53,50,17)$ & $6^{10} 7^{2} 8^{5}$ & 0.000047 \\
116 & 14\,400 & $(5,10,10,5)$ & $4^{5}$ & 0.000047 \\
117 & 7\,200 & $(6,14,15,7)$ & $4^{6} 5$ & 0.000047 \\
118 & 7\,200 & $(6,14,16,8)$ & $4^{8}$ & 0.000047 \\
119 & 7\,200 & $(6,14,16,8)$ & $4^{8}$ & 0.000047 \\
120 & 1\,440 & $(18,42,36,12)$ & $5^{5} 6 7^{5} 16$ & 0.000046 \\
121 & 14\,400 & $(6,13,13,6)$ & $4^{4} 5^{2}$ & 0.000044 \\
122 & 7\,200 & $(9,21,20,8)$ & $4^{2} 5^{5} 8$ & 0.000044 \\
123 & 7\,200 & $(6,13,13,6)$ & $4^{4} 5^{2}$ & 0.000042 \\
124 & 3\,600 & $(6,14,16,8)$ & $4^{8}$ & 0.000042 \\
125 & 14\,400 & $(8,19,19,8)$ & $4^{4} 5^{2} 6^{2}$ & 0.000041 \\
126 & 2\,400 & $(9,27,33,15)$ & $4^{12} 5^{3}$ & 0.000041 \\
127 & 7\,200 & $(7,17,17,7)$ & $4^{3} 5^{4}$ & 0.000039 \\
128 & 7\,200 & $(6,13,13,6)$ & $4^{4} 5^{2}$ & 0.000038 \\
129 & 7\,200 & $(5,10,10,5)$ & $4^{5}$ & 0.000038 \\
130 & 14\,400 & $(5,10,10,5)$ & $4^{5}$ & 0.000038 \\
131 & 14\,400 & $(8,18,17,7)$ & $4^{3} 5 6^{3}$ & 0.000036 \\
132 & 3\,600 & $(7,18,21,10)$ & $4^{8} 5^{2}$ & 0.000036 \\
133 & 7\,200 & $(6,13,13,6)$ & $4^{4} 5^{2}$ & 0.000036 \\
134 & 7\,200 & $(9,22,21,8)$ & $5^{6} 6^{2}$ & 0.000034 \\
135 & 7\,200 & $(6,13,13,6)$ & $4^{4} 5^{2}$ & 0.000034 \\
136 & 1\,440 & $(24,58,50,16)$ & $5^{5} 8^{10} 12$ & 0.000033 \\
137 & 3\,600 & $(7,18,21,10)$ & $4^{8} 5^{2}$ & 0.000032 \\
138 & 7\,200 & $(7,17,18,8)$ & $4^{5} 5^{3}$ & 0.000031 \\
139 & 1\,440 & $(13,32,30,11)$ & $5^{10} 12$ & 0.000030 \\
140 & 2\,400 & $(9,24,24,9)$ & $5^{9}$ & 0.000030 \\
141 & 600 & $(16,58,60,18)$ & $5^{12} 6^{6}$ & 0.000030 \\
142 & 7\,200 & $(6,14,15,7)$ & $4^{6} 5$ & 0.000029 \\
143 & 14\,400 & $(6,13,13,6)$ & $4^{4} 5^{2}$ & 0.000029 \\
144 & 7\,200 & $(12,26,22,8)$ & $6^{6} 8^{2}$ & 0.000028 \\
145 & 720 & $(38,82,61,17)$ & $8^{10} 11^{5} 16^{2}$ & 0.000027 \\
146 & 1\,200 & $(6,14,16,8)$ & $4^{8}$ & 0.000026 \\
147 & 720 & $(7,16,16,7)$ & $4^{5} 6^{2}$ & 0.000025 \\
148 & 14\,400 & $(5,10,10,5)$ & $4^{5}$ & 0.000023 \\
149 & 7\,200 & $(5,10,10,5)$ & $4^{5}$ & 0.000023 \\
150 & 14\,400 & $(8,19,19,8)$ & $4^{4} 5^{3} 7$ & 0.000022 \\
151 & 7\,200 & $(7,18,20,9)$ & $4^{6} 5^{3}$ & 0.000022 \\
152 & 7\,200 & $(6,13,13,6)$ & $4^{4} 5^{2}$ & 0.000022 \\
153 & 7\,200 & $(6,13,13,6)$ & $4^{4} 5^{2}$ & 0.000022 \\
154 & 7\,200 & $(6,13,13,6)$ & $4^{4} 5^{2}$ & 0.000021 \\
155 & 7\,200 & $(5,10,10,5)$ & $4^{5}$ & 0.000021 \\
156 & 3\,600 & $(8,20,20,8)$ & $4^{4} 5^{2} 6^{2}$ & 0.000020 \\
157 & 7\,200 & $(7,17,18,8)$ & $4^{6} 5 6$ & 0.000020 \\
158 & 7\,200 & $(6,14,15,7)$ & $4^{6} 5$ & 0.000020 \\
159 & 2\,400 & $(7,18,21,10)$ & $4^{9} 5$ & 0.000019 \\
160 & 14\,400 & $(7,15,14,6)$ & $4^{2} 5^{3} 6$ & 0.000019 \\
161 & 7\,200 & $(7,17,18,8)$ & $4^{5} 5^{3}$ & 0.000019 \\
162 & 14\,400 & $(7,15,14,6)$ & $4^{2} 5^{3} 6$ & 0.000019 \\
163 & 14\,400 & $(9,20,18,7)$ & $4 5 6^{5}$ & 0.000018 \\
164 & 2\,400 & $(9,24,27,12)$ & $4^{6} 5^{6}$ & 0.000018 \\
165 & 7\,200 & $(6,13,13,6)$ & $4^{4} 5^{2}$ & 0.000018 \\
166 & 7\,200 & $(7,17,17,7)$ & $4^{3} 5^{4}$ & 0.000018 \\
167 & 7\,200 & $(6,14,15,7)$ & $4^{6} 5$ & 0.000018 \\
168 & 3\,600 & $(6,14,16,8)$ & $4^{8}$ & 0.000018 \\
169 & 14\,400 & $(5,10,10,5)$ & $4^{5}$ & 0.000018 \\
170 & 3\,600 & $(12,30,28,10)$ & $5^{6} 6^{2} 8^{2}$ & 0.000017 \\
171 & 3\,600 & $(9,24,25,10)$ & $4^{2} 5^{8}$ & 0.000017 \\
172 & 7\,200 & $(10,26,26,10)$ & $4^{2} 5^{4} 6^{4}$ & 0.000017 \\
173 & 14\,400 & $(8,16,14,6)$ & $4^{2} 6^{4}$ & 0.000017 \\
174 & 7\,200 & $(8,18,17,7)$ & $4^{3} 5^{2} 6 7$ & 0.000017 \\
175 & 7\,200 & $(5,10,10,5)$ & $4^{5}$ & 0.000017 \\
176 & 14\,400 & $(6,13,13,6)$ & $4^{4} 5^{2}$ & 0.000017 \\
177 & 14\,400 & $(7,15,14,6)$ & $4^{2} 5^{3} 6$ & 0.000016 \\
178 & 7\,200 & $(6,14,15,7)$ & $4^{6} 5$ & 0.000016 \\
179 & 1\,200 & $(7,19,24,12)$ & $4^{12}$ & 0.000015 \\
180 & 14\,400 & $(5,10,10,5)$ & $4^{5}$ & 0.000014 \\
181 & 14\,400 & $(5,10,10,5)$ & $4^{5}$ & 0.000014 \\
182 & 14\,400 & $(5,10,10,5)$ & $4^{5}$ & 0.000014 \\
183 & 7\,200 & $(6,13,13,6)$ & $4^{4} 5^{2}$ & 0.000014 \\
184 & 7\,200 & $(6,14,15,7)$ & $4^{6} 5$ & 0.000014 \\
185 & 7\,200 & $(6,13,13,6)$ & $4^{4} 5^{2}$ & 0.000014 \\
186 & 7\,200 & $(6,14,15,7)$ & $4^{6} 5$ & 0.000014 \\
187 & 3\,600 & $(6,14,16,8)$ & $4^{8}$ & 0.000014 \\
188 & 7\,200 & $(6,14,15,7)$ & $4^{6} 5$ & 0.000014 \\
189 & 14\,400 & $(7,16,16,7)$ & $4^{4} 5^{2} 6$ & 0.000013 \\
190 & 7\,200 & $(10,21,18,7)$ & $5^{2} 6^{4} 8$ & 0.000012 \\
191 & 1\,440 & $(13,32,30,11)$ & $5^{10} 12$ & 0.000012 \\
192 & 7\,200 & $(10,25,25,10)$ & $4^{4} 5^{3} 6^{2} 8$ & 0.000012 \\
193 & 3\,600 & $(6,14,16,8)$ & $4^{8}$ & 0.000011 \\
194 & 7\,200 & $(6,14,16,8)$ & $4^{8}$ & 0.000011 \\
195 & 14\,400 & $(5,10,10,5)$ & $4^{5}$ & 0.000011 \\
196 & 7\,200 & $(5,10,10,5)$ & $4^{5}$ & 0.000011 \\
197 & 1\,200 & $(5,10,10,5)$ & $4^{5}$ & 0.000011 \\
198 & 14\,400 & $(8,16,14,6)$ & $4^{2} 6^{4}$ & 0.000011 \\
199 & 14\,400 & $(8,18,17,7)$ & $4^{3} 5 6^{3}$ & 0.000010 \\
200 & 7\,200 & $(8,18,17,7)$ & $4^{3} 5^{2} 6 7$ & 0.000010 \\
201 & 2\,400 & $(6,14,15,7)$ & $4^{6} 5$ & 0.000010 \\
202 & 3\,600 & $(6,13,13,6)$ & $4^{4} 5^{2}$ & 0.000010 \\
203 & 7\,200 & $(8,21,22,9)$ & $4^{4} 5^{5}$ & 0.000010 \\
204 & 3\,600 & $(7,17,18,8)$ & $4^{4} 5^{4}$ & 0.000010 \\
205 & 7\,200 & $(7,18,20,9)$ & $4^{8} 6$ & 0.000009 \\
206 & 1\,440 & $(13,32,30,11)$ & $5^{10} 12$ & 0.000009 \\
207 & 14\,400 & $(5,10,10,5)$ & $4^{5}$ & 0.000009 \\
208 & 7\,200 & $(5,10,10,5)$ & $4^{5}$ & 0.000009 \\
209 & 14\,400 & $(5,10,10,5)$ & $4^{5}$ & 0.000008 \\
210 & 7\,200 & $(6,14,15,7)$ & $4^{6} 5$ & 0.000008 \\
211 & 3\,600 & $(6,14,16,8)$ & $4^{8}$ & 0.000008 \\
212 & 7\,200 & $(6,14,16,8)$ & $4^{8}$ & 0.000008 \\
213 & 14\,400 & $(8,16,14,6)$ & $4^{2} 6^{4}$ & 0.000008 \\
214 & 3\,600 & $(6,14,15,7)$ & $4^{6} 5$ & 0.000008 \\
215 & 2\,400 & $(9,20,18,7)$ & $5^{6} 8$ & 0.000007 \\
216 & 14\,400 & $(6,13,13,6)$ & $4^{4} 5^{2}$ & 0.000007 \\
217 & 7\,200 & $(6,13,13,6)$ & $4^{4} 5^{2}$ & 0.000007 \\
218 & 7\,200 & $(6,14,15,7)$ & $4^{6} 5$ & 0.000007 \\
219 & 3\,600 & $(6,14,16,8)$ & $4^{8}$ & 0.000007 \\
220 & 3\,600 & $(6,14,15,7)$ & $4^{6} 5$ & 0.000007 \\
221 & 14\,400 & $(5,10,10,5)$ & $4^{5}$ & 0.000007 \\
222 & 7\,200 & $(7,17,18,8)$ & $4^{5} 5^{3}$ & 0.000007 \\
223 & 7\,200 & $(8,20,20,8)$ & $4^{2} 5^{6}$ & 0.000007 \\
224 & 7\,200 & $(9,22,22,9)$ & $4^{2} 5^{5} 6^{2}$ & 0.000007 \\
225 & 2\,400 & $(7,18,21,10)$ & $4^{9} 5$ & 0.000006 \\
226 & 7\,200 & $(6,13,13,6)$ & $4^{4} 5^{2}$ & 0.000006 \\
227 & 3\,600 & $(6,14,16,8)$ & $4^{8}$ & 0.000006 \\
228 & 14\,400 & $(6,13,13,6)$ & $4^{4} 5^{2}$ & 0.000006 \\
229 & 3\,600 & $(9,24,24,9)$ & $5^{9}$ & 0.000006 \\
230 & 720 & $(7,16,16,7)$ & $4^{5} 6^{2}$ & 0.000006 \\
231 & 7\,200 & $(5,10,10,5)$ & $4^{5}$ & 0.000006 \\
232 & 14\,400 & $(5,10,10,5)$ & $4^{5}$ & 0.000006 \\
233 & 14\,400 & $(6,13,13,6)$ & $4^{4} 5^{2}$ & 0.000006 \\
234 & 14\,400 & $(5,10,10,5)$ & $4^{5}$ & 0.000006 \\
235 & 14\,400 & $(5,10,10,5)$ & $4^{5}$ & 0.000006 \\
236 & 3\,600 & $(7,18,20,9)$ & $4^{8} 6$ & 0.000005 \\
237 & 7\,200 & $(6,13,13,6)$ & $4^{4} 5^{2}$ & 0.000005 \\
238 & 3\,600 & $(5,10,10,5)$ & $4^{5}$ & 0.000005 \\
239 & 7\,200 & $(5,10,10,5)$ & $4^{5}$ & 0.000005 \\
240 & 3\,600 & $(7,18,21,10)$ & $4^{8} 5^{2}$ & 0.000005 \\
241 & 3\,600 & $(6,14,15,7)$ & $4^{6} 5$ & 0.000005 \\
242 & 2\,400 & $(9,24,27,12)$ & $4^{6} 5^{6}$ & 0.000005 \\
243 & 600 & $(10,30,30,10)$ & $5^{10}$ & 0.000004 \\
244 & 7\,200 & $(6,14,15,7)$ & $4^{6} 5$ & 0.000004 \\
245 & 14\,400 & $(5,10,10,5)$ & $4^{5}$ & 0.000004 \\
246 & 7\,200 & $(6,14,15,7)$ & $4^{6} 5$ & 0.000004 \\
247 & 14\,400 & $(7,16,16,7)$ & $4^{4} 5^{2} 6$ & 0.000004 \\
248 & 14\,400 & $(5,10,10,5)$ & $4^{5}$ & 0.000004 \\
249 & 1\,440 & $(8,22,25,11)$ & $4^{10} 7$ & 0.000004 \\
250 & 7\,200 & $(5,10,10,5)$ & $4^{5}$ & 0.000004 \\
251 & 3\,600 & $(11,22,18,7)$ & $4^{2} 6^{2} 8^{3}$ & 0.000004 \\
252 & 1\,200 & $(9,24,27,12)$ & $4^{6} 5^{6}$ & 0.000004 \\
253 & 3\,600 & $(7,18,21,10)$ & $4^{8} 5^{2}$ & 0.000004 \\
254 & 3\,600 & $(7,17,18,8)$ & $4^{4} 5^{4}$ & 0.000004 \\
255 & 2\,400 & $(6,14,15,7)$ & $4^{6} 5$ & 0.000003 \\
256 & 2\,400 & $(9,20,18,7)$ & $5^{6} 8$ & 0.000003 \\
257 & 14\,400 & $(5,10,10,5)$ & $4^{5}$ & 0.000003 \\
258 & 7\,200 & $(9,21,20,8)$ & $4^{2} 5^{5} 8$ & 0.000003 \\
259 & 14\,400 & $(7,15,14,6)$ & $4^{2} 5^{3} 6$ & 0.000003 \\
260 & 14\,400 & $(8,19,19,8)$ & $4^{4} 5^{3} 7$ & 0.000003 \\
261 & 3\,600 & $(6,13,13,6)$ & $4^{4} 5^{2}$ & 0.000003 \\
262 & 7\,200 & $(6,13,13,6)$ & $4^{4} 5^{2}$ & 0.000003 \\
263 & 14\,400 & $(6,13,13,6)$ & $4^{4} 5^{2}$ & 0.000003 \\
264 & 14\,400 & $(6,13,13,6)$ & $4^{4} 5^{2}$ & 0.000003 \\
265 & 3\,600 & $(5,10,10,5)$ & $4^{5}$ & 0.000003 \\
266 & 7\,200 & $(6,13,13,6)$ & $4^{4} 5^{2}$ & 0.000003 \\
267 & 7\,200 & $(5,10,10,5)$ & $4^{5}$ & 0.000003 \\
268 & 14\,400 & $(6,13,13,6)$ & $4^{4} 5^{2}$ & 0.000003 \\
269 & 7\,200 & $(9,22,21,8)$ & $5^{6} 6^{2}$ & 0.000003 \\
270 & 7\,200 & $(5,10,10,5)$ & $4^{5}$ & 0.000002 \\
271 & 7\,200 & $(6,13,13,6)$ & $4^{4} 5^{2}$ & 0.000002 \\
272 & 3\,600 & $(9,24,25,10)$ & $4^{2} 5^{8}$ & 0.000002 \\
273 & 7\,200 & $(8,16,14,6)$ & $4^{2} 6^{4}$ & 0.000002 \\
274 & 14\,400 & $(5,10,10,5)$ & $4^{5}$ & 0.000002 \\
275 & 14\,400 & $(5,10,10,5)$ & $4^{5}$ & 0.000002 \\
276 & 3\,600 & $(12,28,25,9)$ & $5^{6} 6^{2} 11$ & 0.000002 \\
277 & 7\,200 & $(6,14,15,7)$ & $4^{6} 5$ & 0.000002 \\
278 & 7\,200 & $(7,17,18,8)$ & $4^{4} 5^{4}$ & 0.000002 \\
279 & 7\,200 & $(6,13,13,6)$ & $4^{4} 5^{2}$ & 0.000002 \\
280 & 7\,200 & $(6,13,13,6)$ & $4^{4} 5^{2}$ & 0.000002 \\
281 & 3\,600 & $(6,14,15,7)$ & $4^{6} 5$ & 0.000002 \\
282 & 7\,200 & $(6,14,15,7)$ & $4^{6} 5$ & 0.000002 \\
283 & 3\,600 & $(6,14,15,7)$ & $4^{6} 5$ & 0.000002 \\
284 & 14\,400 & $(5,10,10,5)$ & $4^{5}$ & 0.000002 \\
285 & 7\,200 & $(7,17,18,8)$ & $4^{4} 5^{4}$ & 0.000002 \\
286 & 7\,200 & $(6,13,13,6)$ & $4^{4} 5^{2}$ & 0.000002 \\
287 & 14\,400 & $(6,13,13,6)$ & $4^{4} 5^{2}$ & 0.000001 \\
288 & 14\,400 & $(5,10,10,5)$ & $4^{5}$ & 0.000001 \\
289 & 7\,200 & $(5,10,10,5)$ & $4^{5}$ & 0.000001 \\
290 & 7\,200 & $(5,10,10,5)$ & $4^{5}$ & 0.000001 \\
291 & 14\,400 & $(5,10,10,5)$ & $4^{5}$ & 0.000001 \\
292 & 14\,400 & $(5,10,10,5)$ & $4^{5}$ & 0.000001 \\
293 & 3\,600 & $(7,17,18,8)$ & $4^{4} 5^{4}$ & 0.000001 \\
294 & 3\,600 & $(6,14,16,8)$ & $4^{8}$ & 0.000001 \\
295 & 7\,200 & $(9,20,18,7)$ & $5^{6} 8$ & 0.000001 \\
296 & 3\,600 & $(7,18,20,9)$ & $4^{8} 6$ & 0.000001 \\
297 & 7\,200 & $(6,13,13,6)$ & $4^{4} 5^{2}$ & 0.000001 \\
298 & 14\,400 & $(5,10,10,5)$ & $4^{5}$ & 0.000001 \\
299 & 7\,200 & $(6,13,13,6)$ & $4^{4} 5^{2}$ & 0.000001 \\
300 & 7\,200 & $(7,17,18,8)$ & $4^{4} 5^{4}$ & 0.000001 \\
301 & 7\,200 & $(6,13,13,6)$ & $4^{4} 5^{2}$ & 0.000001 \\
302 & 7\,200 & $(5,10,10,5)$ & $4^{5}$ & 0.000001 \\
303 & 7\,200 & $(6,14,15,7)$ & $4^{6} 5$ & 0.000001 \\
304 & 7\,200 & $(6,13,13,6)$ & $4^{4} 5^{2}$ & 0.000001 \\
305 & 7\,200 & $(5,10,10,5)$ & $4^{5}$ & 0.000001 \\
306 & 7\,200 & $(6,13,13,6)$ & $4^{4} 5^{2}$ & 0.000001 \\
307 & 14\,400 & $(5,10,10,5)$ & $4^{5}$ & 0.000001 \\
308 & 14\,400 & $(5,10,10,5)$ & $4^{5}$ & 0.000001 \\
309 & 7\,200 & $(6,14,15,7)$ & $4^{6} 5$ & 0.000001 \\
310 & 14\,400 & $(5,10,10,5)$ & $4^{5}$ & 0.000001 \\
311 & 7\,200 & $(9,20,18,7)$ & $5^{6} 8$ & 0.000001 \\
312 & 14\,400 & $(5,10,10,5)$ & $4^{5}$ & 0.000001 \\
313 & 14\,400 & $(5,10,10,5)$ & $4^{5}$ & 0.000001 \\
314 & 7\,200 & $(6,14,15,7)$ & $4^{6} 5$ & 0.000001 \\
315 & 7\,200 & $(6,13,13,6)$ & $4^{4} 5^{2}$ & 0.000001 \\
316 & 7\,200 & $(6,13,13,6)$ & $4^{4} 5^{2}$ & 0.000001 \\
317 & 14\,400 & $(5,10,10,5)$ & $4^{5}$ & 0.000001 \\
318 & 7\,200 & $(6,13,13,6)$ & $4^{4} 5^{2}$ & 0.000001 \\
319 & 7\,200 & $(6,13,13,6)$ & $4^{4} 5^{2}$ & 0.000001 \\
320 & 7\,200 & $(6,13,13,6)$ & $4^{4} 5^{2}$ & 0.000001 \\
321 & 14\,400 & $(5,10,10,5)$ & $4^{5}$ & 0.000001 \\
322 & 14\,400 & $(5,10,10,5)$ & $4^{5}$ & 0.000001 \\
323 & 14\,400 & $(5,10,10,5)$ & $4^{5}$ & 0.000001 \\
324 & 7\,200 & $(5,10,10,5)$ & $4^{5}$ & 0.000001 \\
325 & 3\,600 & $(6,13,13,6)$ & $4^{4} 5^{2}$ & 0.000001 \\
326 & 7\,200 & $(5,10,10,5)$ & $4^{5}$ & 0.000000 \\
327 & 7\,200 & $(6,14,15,7)$ & $4^{6} 5$ & 0.000000 \\
328 & 7\,200 & $(5,10,10,5)$ & $4^{5}$ & 0.000000 \\
329 & 7\,200 & $(5,10,10,5)$ & $4^{5}$ & 0.000000 \\
330 & 7\,200 & $(6,13,13,6)$ & $4^{4} 5^{2}$ & 0.000000 \\
331 & 14\,400 & $(6,13,13,6)$ & $4^{4} 5^{2}$ & 0.000000 \\
332 & 14\,400 & $(5,10,10,5)$ & $4^{5}$ & 0.000000 \\
333 & 14\,400 & $(6,13,13,6)$ & $4^{4} 5^{2}$ & 0.000000 \\
334 & 7\,200 & $(5,10,10,5)$ & $4^{5}$ & 0.000000 \\
335 & 14\,400 & $(5,10,10,5)$ & $4^{5}$ & 0.000000 \\
336 & 14\,400 & $(6,13,13,6)$ & $4^{4} 5^{2}$ & 0.000000 \\
337 & 7\,200 & $(5,10,10,5)$ & $4^{5}$ & 0.000000 \\
\end{longtable}

\normalsize

\clearpage
\section{Output of the exact checks}\label{app:checks}
The script \texttt{scripts/check\_ring.py} recomputes, in $\Qphi$ with rational pairs $m+n\varphi$,
every identity of Section~\ref{sec:ring}, the golden-scale test on the ratios of
Table~\ref{tab:scales}, the two sign rules against each other, the vertex layers and critical depths
of the dodecahedron (block~D), its $112$ piece vertices (E), the sign vectors and exact
$I_h$-orbits of its $165$ pieces (F, G), the modules $\M$, $\MV$, $\Ico$ and $E_8$ (H, I), and
the exact vertices of the representative cells of both four-dimensional cuts (J, K).  It reads only
the outputs of the exact slicers and runs in about a minute.
{\footnotesize
\begin{lstlisting}[basicstyle=\ttfamily\scriptsize,frame=none,xleftmargin=0pt,framexleftmargin=0pt]
== A. identities in Z[phi] used in the article ==
[PASS] phi^2 = phi + 1
[PASS] phi^-1 = phi - 1,  phi^-2 = 2 - phi,  phi^-3 = 2phi - 3,  phi^-4 = 5 - 3phi
[PASS] phi^3 = 1 + 2phi,  phi^4 = 2 + 3phi
[PASS] sqrt5 = 2phi - 1 and sqrt5^2 = 5
[PASS] phi^n = F_n phi + F_{n-1} for 1 <= n <= 14
[PASS] conj(phi) = 1 - phi = -phi^-1
[PASS] N(phi) = -1, Tr(phi) = 1;  N(sqrt5) = -5, Tr(sqrt5) = 0
[PASS] N(m + n phi) = m^2 + mn - n^2 is multiplicative (random check)
[PASS] sqrt5/2 is not an algebraic integer (Tr = 0, N = -5/4)
[PASS] 1 - phi^-4 = 3phi - 4 = sqrt5 / phi^2
[PASS] N(3phi - 4) = -5: the hereditary depth is not a unit
[PASS] N(phi^-4) = 1: the star depth is a unit
[PASS] (3phi-4)^4 = 25 phi^-8  (volume fraction of the hereditary core)   (= 0.5322)
[PASS] (phi^-4)^4 = phi^-16  (volume fraction of the star core)   (= 0.000453)
[PASS] 1/sqrt5 = (2phi - 1)/5 = sqrt5/5 has norm -1/5
[PASS] sin^2 72 = (5 + sqrt5)/8 = (2 + phi)/4 has norm 5/16, not a rational square
[PASS] (phi + 2) = sqrt5 phi, so |n|^2 = phi + 2 for the dodecahedron normals (+-1,+-phi,0)
[PASS] 2 phi^-2 = 4 - 2phi  (star-cut offset)
[PASS] (1 - phi^-4) 2 phi^2 = 2 sqrt5  (hereditary offset)
[PASS] 2 phi^-3 - phi^-2 = 5phi - 8 = phi^-5 = 0.0902 (spike height of the small stellated 120-cell)   (= 0.0902)
[PASS] 2 is inert: x^2 - x - 1 has no root mod 2
[PASS] 3 is inert: x^2 - x - 1 has no root mod 3
[PASS] 5 ramifies: 5 = sqrt5^2, sqrt5 = phi^-1 (phi + 2)
[PASS] 11 splits: 11 = N(3 + phi) = (3 + phi)(4 - phi)
[PASS] 19 splits: 19 = N(4 + phi)

== B. the exact sign test (two rules, same answer) ==
[PASS] trace/norm rule == p,q rule == floating point on 5169 pairs (a, b)
[PASS] no tie: p^2 = 5 q^2 has no integer solution (q != 0) up to 10^4
[PASS] F_n phi - F_{n+1} = (-1)^(n+1) phi^-n decided correctly for n = 5..14   (smallest |value| 1.19e-03)

== C. golden scales: lambda = +-phi^a sqrt5^b, decided by the norm ==
  phi^-2 (pentagon depth)                        value   0.381966  N = 1       -> (1, -2, 0)
  phi^-3 (dodecahedron depth)                    value   0.236068  N = -1      -> (1, -3, 0)
  phi^-4 (star depth)                            value   0.145898  N = 1       -> (1, -4, 0)
  3phi-4 = 1-phi^-4 (hereditary depth)           value   0.854102  N = -5      -> (1, -2, 1)
  1/sqrt5 (critical depth, face centres)         value   0.447214  N = -1/5    -> (1, 0, -1)
  phi^-1 (critical depth, edge midpoints)        value   0.618034  N = -1      -> (1, -1, 0)
  2phi^-2 (edge of the 120-cell)                 value   0.763932  N = 4       -> None
  (-21-4sqrt5)/19 (norm 1 but not integral)      value  -1.576014  N = 1       -> None
  5 - 2sqrt5 = 7 - 4phi (facet-parallel cut offset) value   0.527864  N = 5       -> (1, -3, 1)
[PASS] phi^-d are golden scales with b = 0 (units)
[PASS] 3phi - 4 = sqrt5 phi^-2 is a golden scale with b = 1 (not a unit)
[PASS] 1/sqrt5 is a golden scale with b = -1
[PASS] 2 phi^-2 is not a golden scale (rational factor 2)
[PASS] norm-1 non-integral element is rejected

== D. the dodecahedron: planes, vertex layers, critical depths (midradius 1) ==
[PASS] 20 vertices in Z[phi]^3
[PASS] 12 face normals in Z[phi]^3 with |n|^2 = 3 - phi, faces n.x = 1
  vertex heights along a face normal (fractions of the inradius): [-1, 3-2phi, -3+2phi, 1]
[PASS] vertex layers at +-1 and +-phi^-3, with 5 vertices on each cut plane
  face-centre heights: [-1, 1/5-2/5phi, -1/5+2/5phi, 1]
[PASS] face centres at +-1 and +-1/sqrt5 (adjacent unit normals have inner product 1/sqrt5)
  edge-midpoint heights: [-1, 1-phi, 0, -1+phi, 1]
[PASS] edge midpoints at +-1, +-phi^-1 and 0
[PASS] the three critical depths phi^-3 < 1/sqrt5 < phi^-1 are in this order

== E. the 112 vertices of the 165 pieces (24 planes, Cramer in Q(sqrt5)) ==
  distinct intersection points inside D: 112
[PASS] 112 distinct vertices
  rational denominators of the coordinates: {1: 112}
[PASS] all 112 vertices lie in Z[phi]^3 (no denominators)
[PASS] all 112 vertices lie in the vertex module M
  planes through each vertex (3 = simple): {3: 80, 5: 12, 6: 20}
[PASS] 80 simple vertices, 12 on 5 planes (the five-fold axes), 20 on 6 planes
  vertices of D: 20; on the boundary but not vertices: 60; interior: 32
[PASS] 20 are vertices of D, 60 on the boundary, 32 interior
  |N(W)| of the Cramer denominators W that occur: [Fraction(4, 1)]

== F. the 165 pieces as sign vectors; exact I_h-orbits ==
[PASS] 165 pieces in the exact slicer output
[PASS] 165 centroids give 165 distinct sign vectors against the 12 cut planes
  smallest |n.c - phi^-3| over all centroids and cut planes: 4.508e-02
[PASS] the rotation by 72 degrees about a 5-fold axis and the cyclic permutation generate a group of order 60
[PASS] with the central inversion: I_h of order 120
[PASS] rotation matrices have entries in (1/2) Z[phi], not all in Z[phi]
[PASS] I_h preserves the vertex module M
[PASS] I_h does not preserve Z[phi]^3
[PASS] I_h permutes the 12 cut planes
  exact I_h-orbits of the 165 sign vectors: 7, sizes [1, 12, 12, 20, 30, 30, 60]
[PASS] 7 orbits of sizes 1, 12, 12, 20, 30, 30, 60
[PASS] each exact orbit is exactly one class of the slicer (float invariants)
    weight 0: orbit size   1, slicer class 1, canonical sign vector ------------
    weight 1: orbit size  12, slicer class 5, canonical sign vector -----------+
    weight 2: orbit size  30, slicer class 6, canonical sign vector ----------++
    weight 3: orbit size  20, slicer class 2, canonical sign vector ---------+++
    weight 4: orbit size  30, slicer class 0, canonical sign vector --------++++
    weight 5: orbit size  60, slicer class 4, canonical sign vector -------+++++
    weight 6: orbit size  12, slicer class 3, canonical sign vector -----+++++-+
[PASS] Hamming weights 0,1,2,3 are the four stellation layers (1, 12, 30, 20 pieces) and 4,5,6 the gap (30, 60, 12)

== G. membership without cutting: sign vectors of random points of D ==
  400000 uniform points of D (936797 tried): 165 distinct sign vectors, all among the 165: True
[PASS] every sampled sign vector is a realised piece and all 165 are hit

== H. Z[phi]-modules in dimension 3 ==
  [Z[phi]^3 : M] = 4   (M = Z[phi]-span of the 20 vertices)
  [Z[phi]^3 : <n_F>] = 4   (span of the 12 face normals)
  [Z[phi]^3 : <2 m_e>] = 16   (span of the doubled edge midpoints)
[PASS] [Z[phi]^3 : M] = 4
[PASS] M is also the Z[phi]-span of the 12 face normals
[PASS] unit vectors are not in M
[PASS] edge midpoints: 2 m_e in M, m_e not all in M
[PASS] (2+phi)/5 has norm 1/5: face centres (2+phi)/5 n_F are not in Z[phi]^3
[PASS] phi^2 M = M (unit), sqrt5 M is a sublattice of index 5^3 = 125

== I. the 120-cell: Z[phi]^4, the vertex module M_V, the icosian ring ==
[PASS] 600 vertices and 120 facet normals, all in Z[phi]^4
[PASS] |n|^2 = 4; every vertex on exactly 4 facets; every facet carries 20 vertices; n.v <= 2phi^2 always
[PASS] with the EVEN permutations of (phi,1,phi^-1,0) the hyperplanes do not support facets   (96 of 96 fail)
  vertex heights along (1,0,0,0): [-1-phi, 1-2phi, -2, -1phi, -1, 1-phi, -2+phi, 0, 2-phi, -1+phi, 1, phi, 2, -1+2phi, 1+phi]
[PASS] heights 0, +-phi^-2, +-phi^-1, +-1, +-phi, +-2, +-sqrt5, +-phi^2
  layer sizes (height: count): {'-1-phi': 20, '1-2phi': 20, '-2': 30, '-1phi': 60, '-1': 60, '1-phi': 60, '-2+phi': 20, '0': 60, '2-phi': 20, '-1+phi': 60, '1': 60, 'phi': 60, '2': 30, '-1+2phi': 20, '1+phi': 20}
[PASS] layers 20,20,30,60,60,60,20,60,20,60,60,60,30,20,20
[PASS] height / inradius: phi^-2 / phi^2 = phi^-4 (star layer), sqrt5 / phi^2 = 1 - phi^-4 (hereditary layer)
[PASS] the 120 vectors n/2 are unit quaternions closed under multiplication (binary icosahedral group 2I)
  [Z[phi]^4 : M_V] = 16,  [I : Z[phi]^4] = 16,  [I : M_V] = 256
[PASS] [Z[phi]^4 : M_V] = 16, [I : Z[phi]^4] = 16, [I : M_V] = 256
[PASS] 60 mirrors (pairs +-n)
[PASS] every reflection of H4 preserves M_V
[PASS] every reflection of H4 preserves the icosian ring
[PASS] not every reflection preserves Z[phi]^4   (e.g. (-1, 0, 0, 0))
[PASS] phi^-4 M_V = M_V (unit), (sqrt5/phi^2) M_V has index 5^4 = 625 in M_V
[PASS] lambda = (2+phi)/5 = phi/sqrt5 is totally positive with norm 1/5
  Gram determinant of (I, Tr(x.y)) = 625/256;  of (I, 2 Tr(lambda x.y)) = 1, diagonal [4, 2, 4, 2, 2, 4, 2, 4]
[PASS] (I, 2 Tr(lambda x.y)) is even unimodular of rank 8: E8
[PASS] its 240 roots (norm 2) are the 120 unit icosians n/2 and their 120 multiples by phi^-1 (two concentric 600-cells)

== J. hereditary cut: cut hyperplanes n.x = -2+4phi = 4.472136 (facets n.x = 2phi^2) ==
  representative cells (one per orbit): 8; vertices in total: 712
[PASS] J. hereditary cut: every representative vertex re-derived exactly from four of its hyperplanes and checked on all of them   (0 failures)
  distinct exact vertices among the representatives: 665
  hyperplanes through a vertex (4 = simple): {4: 678, 6: 30, 8: 4}
  rational denominators of the coordinates: {1: 665}
  in Z[phi]^4: 665/665;  in the icosian ring: 665/665;  in the vertex module M_V: 665/665
  1 * (every vertex) lies in Z[phi]^4
  1 * (every vertex) lies in the icosian ring
[PASS] hereditary cut: all representative vertices in M_V (hence, by H4-invariance of M_V, every vertex of every cell)

== K. star cut: cut hyperplanes n.x = 4-2phi = 0.763932 (facets n.x = 2phi^2) ==
  representative cells (one per orbit): 338; vertices in total: 3461
[PASS] K. star cut: every representative vertex re-derived exactly from four of its hyperplanes and checked on all of them   (0 failures)
  distinct exact vertices among the representatives: 1748
  hyperplanes through a vertex (4 = simple): {4: 1544, 5: 569, 6: 751, 7: 82, 8: 55, 9: 50, 10: 188, 12: 189, 20: 33}
  rational denominators of the coordinates: {1: 1362, 2: 198, 3: 23, 5: 165}
  in Z[phi]^4: 1362/1748;  in the icosian ring: 1560/1748;  in the vertex module M_V: 1295/1748
  30 * (every vertex) lies in Z[phi]^4
  15 * (every vertex) lies in the icosian ring
[PASS] star cut: denominators 1, 2, 3, 5 occur; 30 x in Z[phi]^4 for every vertex x
[PASS] star cut: not all vertices in M_V   (1295 of 1748 in M_V, 1362 in Z[phi]^4, 1560 in the icosian ring)

== summary ==
0 failures
\end{lstlisting}
}

\end{document}